\newtheorem{claim}{Claim}
\newtheorem{observation}{Observation}[section]
\newtheorem{theorem}{Theorem}[section]
\newtheorem{lemma}{Lemma}[section]
\newtheorem{example}{Example}[section]
\newtheorem{corollary}{Corollary}[section]
\begin{document}

\title{\bf The complexity of total edge domination and some related results on trees}
\date{}

  \author{
\rmfamily Zhuo Pan$^{\ast}$, Yu Yang$^{\ast}$,  Xianyue Li, Shou-Jun Xu$^{\dagger}$\\
    {\rmfamily \small School of Mathematics and Statistics, Gansu Key Laboratory of Applied Mathematics }\\
    {\rmfamily \small and Complex Systems, Lanzhou University, Lanzhou, Gansu 730000, China}
     }

\footnotetext[1]{The first two authors contributed equally to this paper.}
\footnotetext[2]{Corresponding author. E-mail address: shjxu@lzu.edu.cn (S.-J. Xu)}

      \maketitle


\noindent\rmfamily\small{\bf Abstract:}
For a graph $G = (V, E)$ with vertex set $V$ and edge set $E$, a subset $F$ of
$E$ is called an $\emph{edge dominating set}$ (resp. a $\emph{total edge dominating set}$) if every edge in $E\backslash F$ (resp. in $E$) is adjacent to at least one edge in $F$, the minimum cardinality of an edge dominating set (resp. a total edge dominating set) of $G$ is the {\em edge domination number} (resp. {\em total edge domination number}) of $G$, denoted by $\gamma^{'}(G)$ (resp. $\gamma_t^{'}(G)$). In the present paper, we prove that the total edge domination problem is NP-complete for bipartite graphs with maximum degree 3. We also design a linear-time algorithm for solving this problem for trees. Finally, for a graph $G$, we give the inequality $\gamma^{'}(G)\leqslant \gamma^{'}_{t}(G)\leqslant 2\gamma^{'}(G)$  and characterize the trees $T$ which obtain the upper or lower bounds in the inequality.

\vspace{1ex}
{\noindent\small{\bf Keywords:} Edge domination; Total edge domination; NP-completeness; Linear-time algorithm; Trees}


\section{Introduction}
\rm\normalsize
Dominating problems have been subject of many studies in graph theory, and have many applications in operations research, e.g., in resource allocation and network routing, as well as in coding theory. There are many variants of domination, we mainly fucus on the total edge domination  which is a variant of edge domination. Edge domination is introduced by Mitchell and Hedetniemi $\cite{mh77}$ and is related to telephone switching network $\cite{l68}$. Edge domination is also related to the approximation of the vertex cover problem, since an independent edge dominating set is a matching $\cite{k72}$.

In this paper we in general follow $\cite{hhs98}$ for natation and graph theory terminology. All graphs considered here are finite, undirected, connected, have no loops or multiple edges. Let $G=(V, E)$ be a graph with vertex set $V$ and edge set $E$. A subset $F$ of $E$ is called an $\emph{edge dominating set}$ (abbreviated for {\em ED-set}) of $G$ if every edge not in $F$ is adjacent to at least one edge in $F$. The edge domination number, denoted by $\gamma ^{'}(G)$, is the minimum cardinality of an ED-set of $G$. An ED-set of $G$ with cardinality $\gamma^{'}(G)$ is called a {\em $\gamma^{'}(G)$-set}. The edge domination problem has been studied by several authors for example $\cite{hk93, k98, x06, yg80}$.
Yannakakis and Gavril $\cite{yg80}$ showed that, the edge domination problem is NP-complete even when graphs are planar or bipartite of maximum degree 3, but solvable for trees and claw-free chordal graphs.

The concept of the total edge domination, a variant of edge domination, was introduced by Kulli and Patwari $\cite{kp91}$. A subset $F_t$ of $E$ is called a $\emph{total edge dominating set}$ (abbreviated for {\em TED-set}) of $G$ if every edge  is adjacent to at least one edge in $F_t$. The {\em total edge domination number}, denoted by $\gamma^{'}_{t}(G)$, is the minimum cardinality of a TED-set of $G$. A TED-set of $G$ with cardinality $\gamma^{'}_{t}(G)$ is called a {\em $\gamma^{'}_{t}(G)$-set}. Zhao et al. proved \cite{zlm14} that the total edge domination problem is NP-complete for planar graphs with maximum degree three, and for undirected path graphs and also constructed a linear algorithm for total edge domination problem in trees by a label method. For more study on total edge domination, see for example references  $\cite{ms13, pc16, v14}$.

As far as we know, there is no discussion on the complexity of total edge domination problem for bipartite graphs. For this reason, we prove that the total edge domination problem is NP-complete for bipartite graphs with maximum degree 3. We also design another linear time algorithm for computing $\gamma_t^{'}(T)$ of a tree $T$ by the dynamic programming method, different from the algorithm in $\cite{zlm14}$.  Kulli et al. $\cite {kp91}$ gave the lower bound of the total edge domination number for a graph $G$: $\gamma^{'}(G)\leqslant \gamma^{'}_{t}(G)$,  it is obvious that $\gamma^{'}_{t}(G)\leqslant 2\gamma^{'}(G)$. So, for any graph $G$,  $\gamma^{'}(G)\leqslant \gamma^{'}_{t}(G)\leqslant 2\gamma^{'}(G)$. In this paper, we show that the bounds are sharp and characterize trees achieving the lower or upper bound.

\textbf{Notation.} Let $G=(V, E)$ be a graph. For $v\in V$,  denote by $N_{G}(v)$ the {\em open neighborhood} of $v$ in $G$, i.e., $N_{G}(v)=\{u\in V|~uv \in E\}$,  by $deg_G(v)$ the size of $N_{G}(v)$ called the {\em degree} of $v$, and by $E_{G}(v)$ the set of all the edges of $G$ incident with $v$, i.e., $E_{G}(v)=\{e\in E |$ $v$ is incident with $e\}$. Similarly, for $e\in E$, denote by $N_{G}(e)$ the {\em open neighbourhood} of $e$ in $G$, i.e., $N_{G}(e)=\{e'\in E|$ $e'$ is adjacent to $e \}$ and by $N_{G}[e]=N_{G}(e)\cup \{e\}$ the {\em closed neighbourhood} of $e$. For two vertices $u, v\in V$, the {\em distance} $d_{G}(u,v)$ is defined as the length of a shortest path between $u$ and $v$ in $G$. We define the shorter distance between vertex $w$ and one endpoint of edge $e$ as the {\em distance} between $w$ and $e$, denoted by $d_G(w, e)$. The maximum distance among all pairs of vertices is called the $diameter$ of $G$, denoted by $diam(G)$. 
 If there is no ambiguity in the sequel, the subscript in the notation is omitted.

A $leaf$ of a graph $G$ is a vertex  of degree one and a {\em support vertex} (resp. {\em strong support vertex}) of $G$ is a vertex adjacent to a leaf (resp. adjacent to at least two leaves).  A {\em leaf edge} (or {\em pendant edge}) of $G$ is an edge with one leaf as an endpoint. Consider one vertex of a tree as special, called the {\em root} of this tree. A tree with the fixed root is a {\em rooted tree}. For a vertex $v$ of a rooted tree $T$ with root $r$, a neighbour of $v$ away from $r$ is called a {\em child}. For a positive integer $k$,  a {\em star} $S_{1, k}$ is a tree that contains exactly one non-leaf vertex called a {\em center vertex} and $k$ leaves. A {\em double star} is a tree that contains exactly two non-leaf vertices called {\em center vertices}.

\section{ The result on NP-completeness}

In this section, we are going to prove that the total edge domination problem is NP-complete for bipartite graphs with maximum degree 3. To prove that a problem $P$ is NP-complete, it is enough to prove that $P\in\mathcal{NP}$ and to show that a known NP-complete problem is reducible to the problem $P$ in polynomial time. The known NP-complete problem used in our reduction is the SAT-3 restricted problem as follows:

\vskip 0.1in

\textrm{\bf SAT-3 RESTRICTED PROBLEM (SAT-3 RES)} \cite{y81}.

\noindent{\bf Instance:} A set of clauses $C_{1}, C_{2},\ldots, C_{p}$ containing only variables, with at most three literals per clause, such that every variable occurs two times and its negation once.

\noindent{\bf Question:} Is there a truth assignment of zeros and ones to the variables satisfying all the clauses?

\vskip 0.1in

The decision total edge domination problem is stated as follows:

\vskip 0.1in

\noindent\textbf{Instance:} A graph $G = (V, E)$ and a positive integer $k\leqslant |E|$.

\noindent \textbf{Question:} Does $G$ have a total edge dominating set of size at most $k$?

\vskip 0.1in

Now we can state our main result in this section.

\begin{theorem}\label{th:NPC}
The total edge domination problem for bipartite graphs with maximum degree 3 is NP-complete.
\end{theorem}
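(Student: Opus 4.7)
The plan is to reduce SAT-3 RES to the decision total edge domination problem, with a polynomial-time construction that produces only bipartite graphs of maximum degree $3$. Membership in $\mathcal{NP}$ is immediate: given a candidate $F_t\subseteq E$ of size at most $k$, one checks in polynomial time whether every $e\in E$ has a neighbour in $F_t$.

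For the reduction, given an instance of SAT-3 RES with variables $x_1,\dots,x_n$ and clauses $C_1,\dots,C_p$, I would build a graph $G$ with two kinds of gadgets. For each variable $x_i$, I would attach a small bipartite variable gadget with three distinguished ``literal'' edges, two corresponding to the positive occurrences of $x_i$ and one to the negative occurrence $\overline{x_i}$; the internal structure will be chosen so that any TED-set, restricted to the gadget, pays a fixed cost $\alpha$, and has exactly two ``cheap'' minimum configurations, one that also totally dominates the two positive literal edges (encoding $x_i = 1$) and one that also totally dominates the negative literal edge (encoding $x_i = 0$). For each clause $C_j$, I would attach a clause gadget (for example a short path / small tree) with one attachment edge per literal of $C_j$; the gadget will have a fixed cost $\beta$ such that its attachment edges are all totally dominated from outside if and only if at least one of the corresponding literal edges is in the TED-set. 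Finally I will glue the literal edges of the variable gadgets to those of the clause gadgets, verify that the resulting graph is bipartite (by a $2$-colouring built from the internal $2$-colourings of the gadgets, tying colour classes together consistently at the literal-edge interfaces) and has maximum degree $3$ (by using only vertices of degree $\le 3$ in each gadget and by exploiting that each variable has exactly three literal occurrences), and set $k := n\alpha + p\beta$.

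Having fixed the construction, the correctness proof splits into two directions. For the forward direction, from a satisfying assignment I would take the ``positive'' minimum configuration in each variable gadget where $x_i = 1$ and the ``negative'' one where $x_i = 0$, then add a fixed canonical TED-set of cost $\beta$ inside each clause gadget; the satisfying-literal condition guarantees that every attachment edge in every clause gadget is adjacent to a selected literal edge, so the result is a TED-set of size exactly $n\alpha + p\beta = k$. For the reverse direction, I would first argue that any TED-set $F_t$ of $G$ must spend at least $\alpha$ edges inside each variable gadget and at least $\beta$ edges inside each clause gadget; if $|F_t|\le k$ then equality must hold in every gadget, which in turn forces each variable gadget's contribution into one of the two canonical minimum configurations, yielding a Boolean assignment, and forces each clause gadget to rely on an external literal edge for the total domination of some interface edge, which is exactly the assertion that the assignment satisfies every clause.

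The main obstacle is the design of the variable gadget: it must be bipartite, have maximum degree $3$, admit exactly two minimum TED configurations (so that a Boolean choice is forced rather than merely allowed), and be robust in the sense that any TED-set of minimum size inside the gadget really does totally dominate the intended literal edges (not just edge-dominate them). Finding a gadget that simultaneously satisfies all four conditions, and then verifying the precise values of $\alpha$ and $\beta$ by a small case analysis on which edges of each gadget lie in $F_t$, is where the real work of the proof lies; once the gadget is in hand, gluing, bipartiteness, the degree bound, and the counting argument above are routine.
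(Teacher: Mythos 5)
Your write-up is a template for a reduction, not a proof: everything that makes the theorem true is deferred in your own closing sentence, ``finding a gadget that simultaneously satisfies all four conditions \dots is where the real work of the proof lies.'' That gadget is exactly what the paper supplies and what your proposal omits. In the paper, the gadget for a variable $x_i$ is the explicit bipartite graph formed by the three paths $a_ia_{i,0}a_{i,1}a_{i,2}$, $b_ib_{i,0}b_{i,1}b_{i,2}$, $c_ic_{i,0}c_{i,1}c_{i,2}$ together with the edges $a_ic_i$ and $c_ib_i$; the pendant edges $a_{i,1}a_{i,2}$, $b_{i,1}b_{i,2}$, $c_{i,1}c_{i,2}$ force $a_{i,0}a_{i,1}$, $b_{i,0}b_{i,1}$, $c_{i,0}c_{i,1}$ plus a neighbour of each into every TED-set, so each gadget costs exactly $6$, each clause is a single edge $d_ld'_l$ of cost $0$ that can only be totally dominated from a variable gadget, and the target is $k=6n$. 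Without exhibiting such a construction and carrying out the case analysis on which edges of it lie in $F_t$, there is nothing one can check; the ``plan'' would equally well describe a reduction to many problems for which no such gadget exists.

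Two further points in your outline would need repair even after a gadget is chosen. First, you demand that the variable gadget admit \emph{exactly two} minimum TED configurations so that a Boolean choice is ``forced rather than merely allowed.'' The paper's gadget does not have this uniqueness property, and it is not needed: instead one uses an exchange argument (e.g.\ replacing $c_{i,1}c_{i,2}$ by $c_{i,0}d'_{l_3}$ in Case 1 of the paper's proof) to normalize an arbitrary optimal solution into one of two canonical shapes before reading off the assignment; insisting on exact uniqueness may make a suitable gadget impossible to build. Second, the maximum-degree-$3$ claim is not routine on the clause side: if a clause consists of three positive (or three negative) literals, the natural clause vertex has degree $4$, and the paper must replace the clause edge by an auxiliary graph $H$, prove that $H$ contributes $9$ edges to $F$ when no interface edge is selected and $8$ when one is, and change the target to $6n+8s$. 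Your remark that the degree bound follows because each variable occurs three times addresses only the variable gadgets; if instead you intend a clause gadget with distinct attachment vertices, you still owe the proof of its fixed cost $\beta$ and of the ``dominated from outside iff a literal edge is chosen'' property, which is again precisely the unexhibited work.
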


\begin{proof}
The reduction is from the SAT-3 restricted problem. Consider a set of clauses $\{C_{1},\ldots, C_{p}\}$ with variables $x_{1}, x_{2},\ldots, x_{n}$ as input for the SAT-3 restricted problem. Now we construct a graph $G=(V, E)$. For any $1\leqslant l\leqslant p$, there are two adjacent vertices, say $d_{l}$ and $d'_{l}$, corresponding to the clause $C_l$, denoted by $G_l$. For any $1\leqslant i\leqslant n$, there is a subgraph of $G$, which is a disjoint union of three paths $a_ia_{i, 0}a_{i, 1}a_{i, 2}$, $b_ib_{i, 0}b_{i, 1}b_{i, 2}$, $c_ic_{i, 0}c_{i, 1}c_{i, 2}$, and two edges $a_ic_i$ and $c_ib_i$, corresponding to the variable $x_i$, denoted by $G_{x_i}$ (see Fig. \ref{Fig:NPCConG}). For any clause $C_{l}$, if $x_i\in C_{l}$, then we connect $d_{l}$ to one of vertices $a_{i, 0}$ and $b_{i, 0}$ to ensure that $d(a_{i, 0})=3$ and $d(b_{i, 0})=3$ (from conditions in SAT-3 RES); if $\overline{x_i}\in C_{l}$, then we connect $d'_{l}$ to $c_{i, 0}$ (for an example, see Fig. 1). It is obvious that $G$ is bipartite, coloring vertices with white and black, shown as Fig. \ref{Fig:NPCConG}.  We will show that there is a truth assignment of zeros and ones to the variables satisfying all clauses $\{C_{1}, C_{2},\ldots, C_{p}\}$ if and only if $G$ has a total edge dominating set of size $6n$.

\begin{figure}[h]
\centering
\includegraphics[scale=0.35]{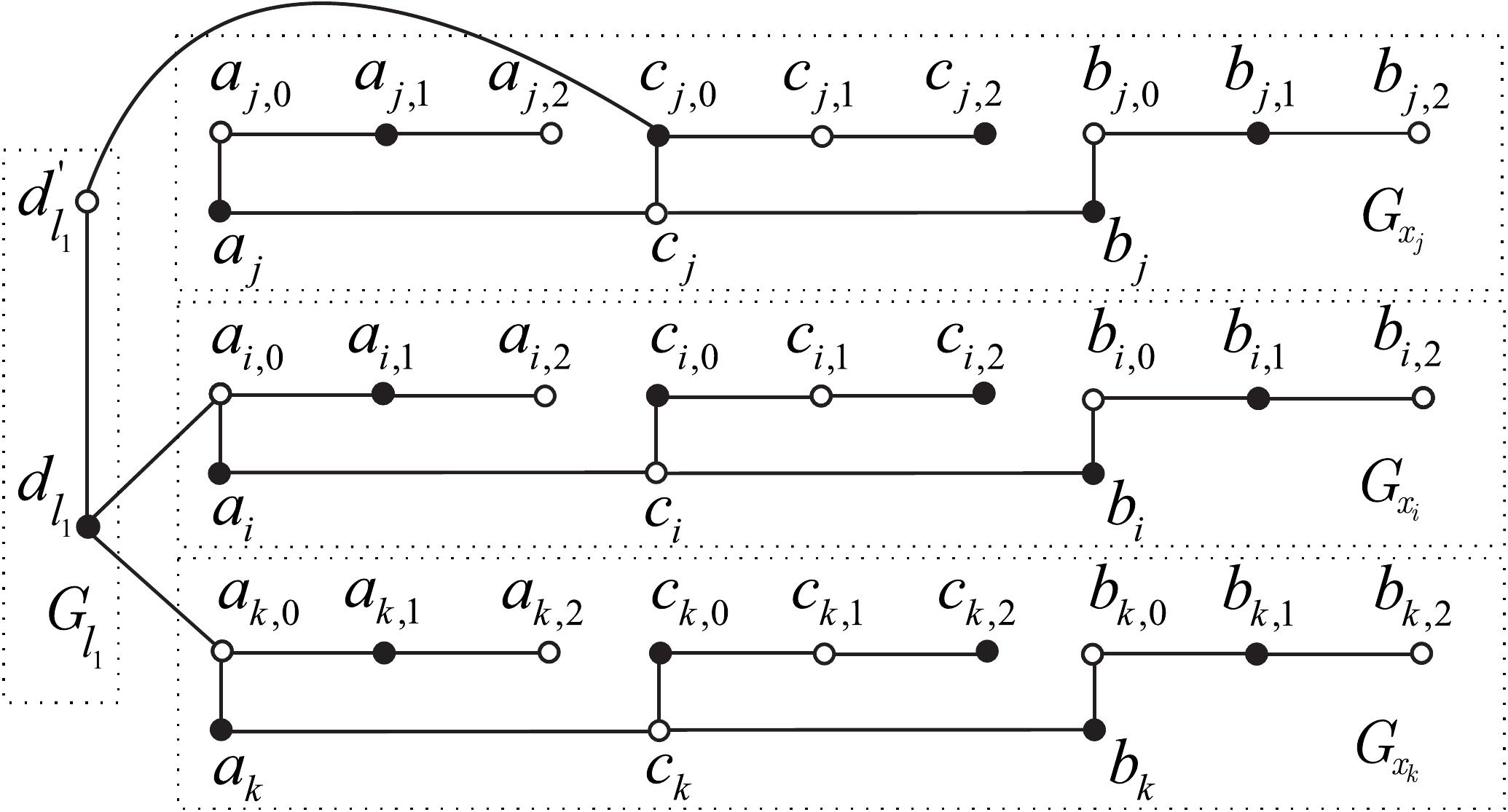}
\caption{The induced subgraph of $G$ by $G_{l_1}$, $G_{x_i}$, $G_{x_j}$ and $G_{x_k}$; for example, $C_{l_1}=(x_i\overline{x_j}x_k)$.}
\vspace{-0.3cm}
\label{Fig:NPCConG}
\end{figure}

Necessity: Given a satisfying assignment of the clauses, define a set $F$ of edges as follows (assume that $x_i$ is in two clauses $C_{l_1}$ and $C_{l_3}$, and $\overline{x_i}$ is in clause $C_{l_2}$):
\setlength{\abovedisplayskip}{1pt}
\setlength{\belowdisplayskip}{1.5pt}
\begin{align*}
F=&\{a_{i, 0}d_{j_{1}}, a_{i, 0}a_{i, 1}, b_{i, 0}d_{j_{3}}, b_{i, 0}b_{i, 1}, c_ic_{i, 0}, c_{i, 0}c_{i, 1}\mid x_{i}=1\}\\
& \cup \{a_ia_{i, 0}, a_{i, 0}a_{i, 1}, b_ib_{i, 0}, b_{i, 0}b_{i, 1}, c_{i, 0}d'_{j_2}, c_{i, 0}c_{i, 1}\mid x_{i}=0\},
\end{align*}
(see Fig. \ref{Fig:NPCConTEDS}). It is obvious that $F$ is a TED-set of size $6n$.

\begin{figure}[h]\label{Fig:NPCConTEDS}
\centering
\subfigure[\scriptsize{In the case $x_i=0$};]
{
    \begin{minipage}[t]{0.4\linewidth}
        \label{a}
        \includegraphics[scale=0.45]{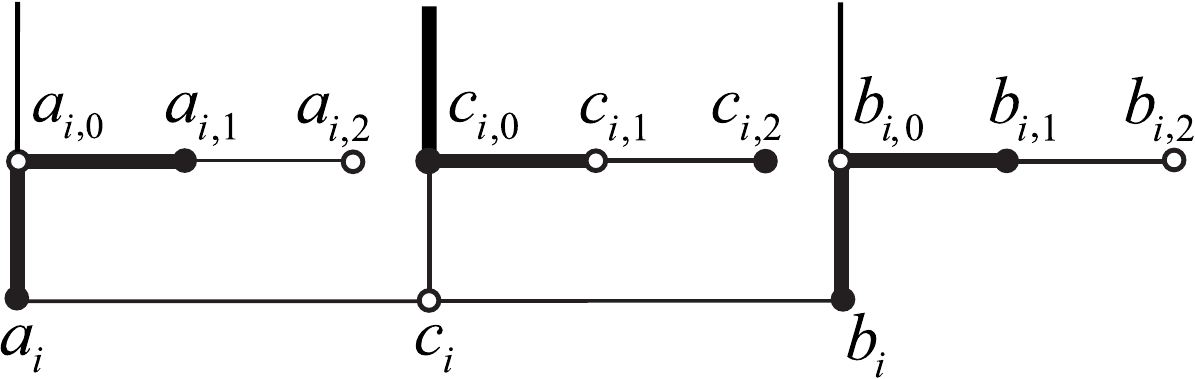}
    \end{minipage}
}
\subfigure[\scriptsize{In the case $x_i=1$.}]{
    \begin{minipage}[t]{0.4\linewidth}
        \label{b}
        \includegraphics[scale=0.5]{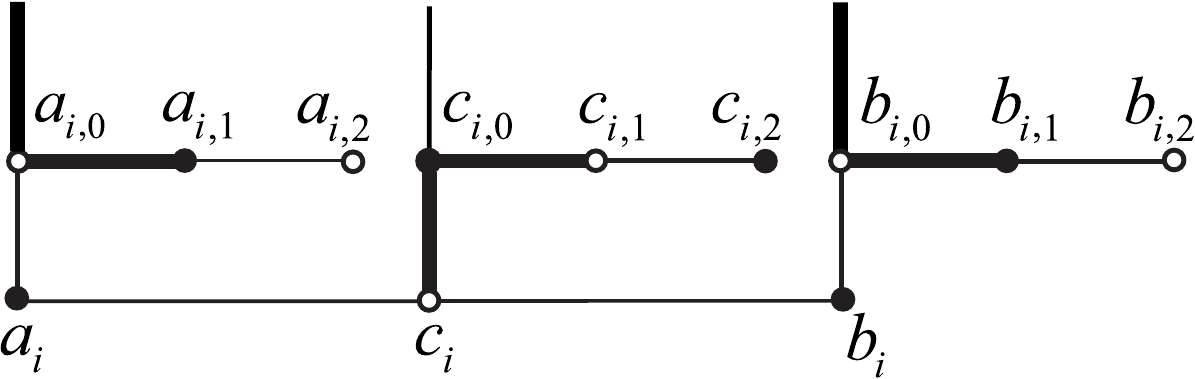}
    \end{minipage}
}
\centering
\caption{The construction of a TED-set $F$ on $G_{x_i}$, represented by the thick edges.}
\vspace{-0.3cm}
\end{figure}

Conversely, we assume that $G$ has a TED-set $F$ of size  $6n$. For any $1\leqslant i\leqslant n$, in view of leaf edges $a_{i, 1}a_{i, 2}, b_{i, 1}b_{i, 2}, c_{i, 1}c_{i, 2}$, $F$ must contain three edges $a_{i, 0}a_{i, 1}, b_{i, 0}b_{i, 1}, c_{i, 0}c_{i, 1}$ and its respective adjacent edges. Thus the subgraph $G_{x_i}$ contains exactly 6 edges in $F$. For the convenience of proof, we assume that $x_i$ is contained in clauses $C_{l_1}$ and $C_{l_2}$, and $\overline{x_i}$ is contained in clause $C_{l_3}$.

\noindent{\bf Case 1.} $c_ic_{i, 0}\not\in F$.

In this case, $F$ must contain $a_ia_{i, 0}, b_ib_{i, 0}$ and we may assume that the edge adjacent to $c_{i, 0}c_{i, 1}$ in $F$ is $c_{i, 0}d'_{l_3}$, otherwise we can add  $c_{i, 0}d'_{l_3}$ into $F$ by deleting $c_{i, 1}c_{i, 2}$ from $F$.

\noindent{\bf Case 2.} $c_ic_{i, 0}\in F$.

Similar to Case 1, we can assume that $a_{i, 0}d_{l_1}, b_{i, 0}d_{l_2}\in F$.

Therefore, regardless of whether $F$ contains $c_ic_{i, 0}$, we can always give a special total edge dominating set $F$ of size $6n$. We define a truth assignment $\tau$ by, if $c_ic_{i, 0}\in F$, setting $x_i=1$ and $x_i=0$, otherwise. Since $F$ is a TED-set constructed as above, at least one edge in $F$ is adjacent to $d_{l}d'_{l}$ for every $l$ (note that $d_{l}d'_{l}\notin F$). Consequently $\tau$ satisfies all clauses.

The degree of vertices except for $d_l$ and $d'_l$ in $G$ constructed above is at most 3, but if $C_l=x_{i_1}x_{i_2}x_{i_3}$ (resp., $\overline{x_{i_1}}~\overline{x_{i_2}}~\overline{x_{i_3}}$), then $d_{G}(d_l)=4$ (resp., $d_{G}(d'_l)=4$). Then we use a tricky technique: (1) replace $H$ shown as Fig. \ref{Fig:NPCDeg4a} for $d_ld'_l$ and, (2) replace the three edges connecting the vertices $a, b, c$ corresponding to variables and $d_l$ (resp. $d'_l$) with the three edges connecting  $a, b, c$ and $x, y, z$ in $H$, respectively, say $ax, by, cz$.

\begin{figure}[h]
\centering
\subfigure[\scriptsize{$H$ contains exactly 9 edges in $F$ when none of the three edges $\{ax, by, cz\}$ belongs to $F$;}]{
    \begin{minipage}[t]{0.45\linewidth}
        \centering
        \label{Fig:NPCDeg4a}
        \includegraphics[scale=0.85]{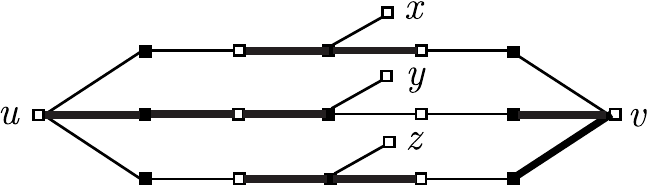}
        \vspace{0.2cm}
    \end{minipage}
}
\subfigure[\scriptsize{ $H$ contains exactly 8 edges in $F$ when $ax$ is in $F$}.]{
    \begin{minipage}[t]{0.45\linewidth}
        \centering
        \label{Fig:NPCDeg4b}
        \includegraphics[scale=0.85]{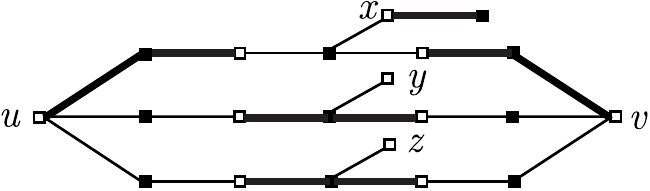}
        \vspace{0.2cm}
    \end{minipage}
}
\centering
\caption{The graph $H$ and the construction of a TED-set $F$ on $H$, represented by the
thick edges.}
\vspace{-0.3cm}
\end{figure}

It is easy to show by a straightforward case analysis that: for  a TED-set $F$ of $G$, \\
(1). if none of the three edges $\{ax, by, cz\}$ belongs to $F$, then $F$ contains at least nine edges from $H$, see Fig. \ref{Fig:NPCDeg4a}.\\
(2). if one of three edges $\{ax, by, cz\}$  is in $F$, say $ax$, then $F$ contains at least eight edges from  $H$, see Fig. \ref{Fig:NPCDeg4b}.

Especially, let $s$ be the number of 3-literal clauses which satisfies that the literals contained are all positive or all negative. Then we can similarly show that there is a truth assignment of zeros and ones to the variables satisfying all clauses $\{C_{1}, C_{2},\ldots, C_{p}\}$ if and only if $G$ has a total edge dominating set of size $6n+8s$.
\end{proof}

From the proof of Theorem \ref{th:NPC}, the graph constructed has a girth of at least 10.

\begin{corollary}
The total edge domination problem for bipartite graphs of girth at least 10 with maximum degree 3 is NP-complete.
\end{corollary}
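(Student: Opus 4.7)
The corollary should follow from Theorem~\ref{th:NPC} as soon as we observe that the graph $G$ produced by the reduction already has girth at least $10$. My plan is therefore to reuse the reduction verbatim---which settles NP-hardness, while membership in NP is immediate because a TED-set can be verified in polynomial time---and append only a girth-verification argument.

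First I would isolate the locally acyclic building blocks of $G$. Each variable gadget $G_{x_i}$ is a tree: its vertices form three disjoint pendant paths joined only through $c_i$ via the two edges $a_ic_i$ and $c_ib_i$. Each clause gadget in the mixed case is the single edge $d_ld'_l$, and in the degree-four case it is the gadget $H$ of Figures~\ref{Fig:NPCDeg4a}--\ref{Fig:NPCDeg4b}, which can be checked by inspection to carry no short cycle across its boundary. Hence every cycle of $G$ must leave and re-enter at least one variable gadget through the inter-gadget edges, i.e.\ the edges joining the contacts $a_{i,0}, b_{i,0}, c_{i,0}$ to the clause vertices.

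The decisive quantitative facts I would record are: (i) inside $G_{x_i}$ the pairwise distances of the contacts are $d(a_{i,0},c_{i,0})=d(b_{i,0},c_{i,0})=3$ and $d(a_{i,0},b_{i,0})=4$, because every contact-to-contact path must pass through $c_i$; and (ii) each contact has degree $3$, two of whose incidences are spent on the internal path, so a contact is joined to \emph{exactly one} clause vertex---in particular, no two distinct clause vertices share a contact, and the two distinct contacts of a single clause necessarily sit in two distinct variable gadgets. Using these facts I would classify cycles by the number of clause and variable gadgets they touch. Cycles through one clause alone are ruled out by (ii); cycles through one clause and one variable are ruled out because $G_{x_i}$ is a tree; cycles through one clause and two variables require an exit from each variable back to some other clause (since by (ii) the single used contact offers no way back), forcing a second clause to be used. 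Counting segments, two variable-internal pieces contribute at least $3+3=6$ edges by (i) and two clause traversals contribute at least $2+2=4$ edges, for a minimum total of $10$. This minimum is realised by
\[a_{i,0}\,d_l\,a_{j,0}\,a_j\,c_j\,c_{j,0}\,d'_{l'}\,c_{i,0}\,c_i\,a_i\,a_{i,0},\]
available whenever $x_i,x_j\in C_l$ and $\overline{x_i},\overline{x_j}\in C_{l'}$.

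The main obstacle is making this counting airtight: since $G$ is bipartite one must explicitly exclude cycles of length $4, 6, 8$, and the case analysis has to track both which side ($d_l$ or $d'_l$) each contact attaches to and which type of contact ($a_{i,0}/b_{i,0}$ versus $c_{i,0}$) is used; one also has to verify that installing the gadget $H$ in place of $d_ld'_l$ for degree-$4$ clauses does not shorten the girth, which I would do by computing the girth of $H$ in isolation and then examining every cycle that uses an inter-gadget edge of $H$. Once this is complete, the reduction of Theorem~\ref{th:NPC} produces, in polynomial time, a bipartite graph of maximum degree $3$ and girth at least $10$ for which $G$ admits a TED-set of the prescribed size iff the input instance of SAT-3 RES is satisfiable, yielding the claimed NP-completeness.
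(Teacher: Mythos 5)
Your proposal is correct and follows essentially the same route as the paper: decompose any cycle according to the clause and variable gadgets it traverses (noting there are no edges among clause gadgets or among variable gadgets and that each $G_{x_i}$ is a tree), charge at least $3$ edges to each variable-gadget segment and at least $2$ to each clause transit to get length at least $5k\geqslant 10$, and dispose of cycles inside $H$ by its large girth. Your version merely makes the contact-distance computations and the attainment of the bound $10$ more explicit than the paper does.
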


\begin{proof}
The notations are as in the proof of Theorem \ref{th:NPC}. By the construction of $G$,  there are no edges among $G_{l}$'s (or $H$) and among $G_{x_i}$'s. So a cycle $C$ is either in $H$ ( note that there is
no cycles in $G_l$ or $G_{x_i}$)or formed by going through $G_{l_1}$, $G_{x_{i_1}}$, $G_{l_2}$, $G_{x_{i_2}}$, $\ldots$,  $G_{l_k}$, $G_{x_{k}}$, $G_{l_1}$ $(k\geqslant 2)$; in the second case the intersection of $C$ and $G_{x_i}$ contains at least three edges and so the length of $C$ is at least $5k\geqslant 10$. Note that the girth of $H$ is more than 12.
\end{proof}

\section{A linear-time algorithm for trees}
In this section, we work on a linear-time algorithm for finding the total edge domination number of a tree by using the dynamic programming method.

First, we define some sets and some parameters. Let $T$ be a tree with an edge $e$.  We define:
\setlength{\abovedisplayskip}{1pt}
\setlength{\belowdisplayskip}{1.5pt}
\begin{align*}
\mathcal{F}_{1}(T, e) :=  \{& F|~F \text{ is a  TED-set of }  T  \text{ with }  e \in F\};\\
\mathcal{F}_{0}(T, e) := \{ & F|~F \text{ is a TED-set of } T \text{ with } e \notin F\} ;\\
\mathcal{F}_{\overline{1}}(T, e) := \{& F|~F \text{ is an ED-set of } T \text{ with a unique isolated edge $e$ in $F$}\};\\
\mathcal{F}_{\overline{0}}(T, e) := \{& F|~F \text{ is a TED-set of } T-e, \text{but $e$ is not dominated by $F$}\}.%
\end{align*}

It is easily obtained
\begin{lemma}\label{Le:foursetsnonEmp}
Let $e$ be a leaf edge of  tree  $T$. Then\\
$\mathcal{F}_1(T, e)\neq \emptyset$ if and only if $T\neq K_2$;\\
$\mathcal{F}_0(T, e)\neq \emptyset$ if and only if $T$ has at least 3 edges;\\
$\mathcal{F}_{\overline{1}}(T, e)\neq \emptyset$ (resp. $\mathcal{F}_{\overline{0}}(T, e)\neq \emptyset$) if and only if $T\setminus N[e]$ has no $K_2$ as components.
\end{lemma}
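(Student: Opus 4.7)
My plan is to verify each of the four biconditionals by an explicit witness in the forward direction and a short obstruction argument in the reverse. For (1) and (2) the analysis turns on the size of $T$; for (3) and (4) it turns on the structure of $E(T)\setminus N[e]$.

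For (1), if $T\neq K_2$ then the non-leaf endpoint $v$ of $e$ has degree at least two, so $F = E(T)$ is a TED-set containing $e$; conversely, for $T = K_2$ the only candidate $\{e\}$ fails because $e$ has no neighbor in $F$. For (2), $F = E(T)\setminus\{e\}$ is a TED-set whenever $|E(T)| \geq 3$, since connectedness then forces every non-$e$ edge to have a non-$e$ neighbor (the worry case being an edge whose only $T$-neighbor is $e$, which is ruled out by $|E(T)|\geq 3$); for $|E(T)|\leq 2$, i.e., $T \in \{K_2,P_3\}$, direct inspection shows $\mathcal{F}_0(T,e) = \emptyset$.

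For (3), I would first argue that any $F \in \mathcal{F}_{\overline{1}}(T,e)$ satisfies $F \cap N(e) = \emptyset$ (otherwise $e$ would not be isolated in $F$), whence $F\setminus\{e\} \subseteq E(T)\setminus N[e]$; combining the non-isolation of the remaining edges of $F$ with the ED-set property on edges of $E(T)\setminus N[e]\setminus F$ shows that every edge of $E(T)\setminus N[e]$ has a neighbor in $E(T)\setminus N[e]$, i.e., no $K_2$ component. Conversely, the explicit set $F := \{e\} \cup (E(T)\setminus N[e])$ works: its complement in $E(T)$ is exactly $N(e)$, each edge of which is dominated by $e$; $e$ is the unique isolated edge of $F$ by construction; and every other edge of $F$ has a neighbor in $F$ by the no-$K_2$-component hypothesis. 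The argument for (4) is structurally parallel: forward, any $F \in \mathcal{F}_{\overline{0}}(T,e)$ lies in $E(T)\setminus N[e]$ (since $F$ does not dominate $e$), and the TED-set condition on $T-e$ restricted to $E(T)\setminus N[e]$ yields the same no-$K_2$-component condition; converse, take $F := E(T)\setminus N[e]$ and check directly that it is a TED-set of $T-e$.

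The main obstacle, requiring the most care, is the converse of (4): each edge of $N(e) \subseteq E(T-e)$ must be totally dominated by $F = E(T)\setminus N[e]$, so one must verify that every $g = vw \in N(e)$ has a neighbor at its endpoint $w$ lying in $E(T)\setminus N[e]$ (since all neighbors of $g$ at $v$ belong to $N[e]$ and are excluded from $F$). This local check, rather than the no-$K_2$-component condition alone, is the delicate step and will need careful case analysis using the leaf-edge structure of $e$ and of the adjacent edges.
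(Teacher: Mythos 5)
Your arguments for $\mathcal{F}_1$, $\mathcal{F}_0$, $\mathcal{F}_{\overline{1}}$, and for the forward implication concerning $\mathcal{F}_{\overline{0}}$ are correct (the paper offers no proof at all here, dismissing the lemma as ``easily obtained'', so there is no authorial argument to compare against). The genuine gap is precisely the step you flag at the end and defer to ``careful case analysis'': in the converse for $\mathcal{F}_{\overline{0}}$ you must show that every $g=vw\in N(e)$ has a neighbour at $w$ lying outside $N[e]$, and this does \emph{not} follow from the no-$K_2$-component hypothesis. If $w$ is a leaf of $T$ (i.e.\ the support vertex $v$ of $e$ has a second leaf neighbour), then every neighbour of $g$ lies in $N[e]$, so no set $F$ with $F\cap N(e)=\emptyset$ can totally dominate $g$ in $T-e$, and hence $\mathcal{F}_{\overline{0}}(T,e)=\emptyset$. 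The smallest instance is $T=P_3$ with $e$ one of its two edges: $T\setminus N[e]$ has no edges at all, hence certainly no $K_2$ component, yet $T-e$ is a single edge and admits no TED-set, so $\mathcal{F}_{\overline{0}}(T,e)=\emptyset$ while the stated criterion is satisfied.

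So the step you identified as delicate is not merely delicate -- it fails, and no case analysis can rescue it, because the biconditional for $\mathcal{F}_{\overline{0}}$ is false as literally stated; the lemma itself is inaccurate on this point. What your own analysis essentially establishes is the correct statement: $\mathcal{F}_{\overline{0}}(T,e)\neq\emptyset$ if and only if every edge of $E(T)\setminus\{e\}$ has a neighbour in $E(T)\setminus N[e]$, equivalently, in addition to the no-$K_2$-component condition one needs that the non-leaf endpoint $v$ of $e=uv$ has no leaf neighbour other than $u$ (no $K_1$ component of $T-u-v$ either); under that strengthened hypothesis your witness $F=E(T)\setminus N[e]$ does work. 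Your treatment of $\mathcal{F}_{\overline{1}}$, with witness $\{e\}\cup(E(T)\setminus N[e])$, is sound and also explains why the two conditions genuinely differ: there the edges of $N(e)$ are dominated by $e$ itself, an option unavailable for $\mathcal{F}_{\overline{0}}$. A complete write-up should prove the corrected equivalence and explicitly note the discrepancy with the lemma as printed.
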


We denote
\begin{align*}
\gamma'_1(T, e): =& \mbox{min} \{~{|F|}~\big|~ F \in \mathcal{F}^{}_{1}(T,e)\};\\
\gamma'_0(T, e):=& \mbox{min} \{{~|F|}~\big|~ F \in \mathcal{F}_{0}(T,e)\};\\
\gamma'_{\overline{1}}(T, e): =& \mbox{min} \{~{|F|}~\big|~ F\in \mathcal{F}_{\overline{1}}(T,e)\};\\
\gamma'_{\overline{0}}(T,e):=& \mbox{min} \{~{|F|}~\big|~F\in\mathcal{F}_{\overline{0}}(T,e)\}.
\end{align*}

By convention, if a set is empty, then we set the value as infinity. For example, if $\mathcal{F}_{\overline{0}}(T,e)= \emptyset$, then  we set $\gamma'_{\overline{0}}(T,e)=\infty$.
We can define  $F\in \mathcal{F}_{1}(T, e)$ (resp. $\mathcal{F}_{0}(T, e)$, $\mathcal{F}_{\overline{1}}(T, e)$, $\mathcal{F}_{\overline{0}}(T, e)$) of minimum cardinality as a $\gamma'_{1}(T, e)$ (resp. $\gamma'_{0}(T, e)$, $\gamma'_{\overline{1}}(T, e)$, $\gamma'_{\overline{0}}(T, e)$){\em-set} of $T$. We give some inequality relationships among four values defined as above.

\begin{lemma}\label{ineq:ParaRela}
Let $T$ be a tree with an edge $e$. If $\mathcal{F}_{1}(T, e), \mathcal{F}_{0}(T, e), \mathcal{F}_{\overline{1}}(T, e)$ and $\mathcal{F}_{\overline{0}}(T,e)$ are non-empty sets, then\\
(1) $\gamma'_{1}(T,e) \leqslant \gamma'_{0}(T,e)+1$;\\
(2) $\gamma'_{1}(T,e) \leqslant \gamma'_{\overline{1}}(T,e)+1$;\\
(3) $\gamma'_{1}(T,e)\leqslant \gamma'_{\overline{0}}(T,e)+2$;\\
(4) $\gamma'_{\overline{1}}(T,e) \leqslant \gamma'_{\overline{0}}(T,e)+1$.
\end{lemma}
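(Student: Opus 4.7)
The plan is to prove each of the four inequalities by a direct construction: take a minimum set of the type appearing on the right-hand side and enlarge it by at most two carefully chosen edges so that the result witnesses the set type on the left-hand side. Each inequality then collapses to a short augmentation argument, saving $1$ or $2$ edges as asserted.

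For (1), I start with a $\gamma'_{0}(T,e)$-set $F \in \mathcal{F}_{0}(T,e)$. Since $F$ is already a TED-set of $T$, every edge of $T$, including $e$ itself, is adjacent to some edge of $F$, so $F \cup \{e\}$ remains a TED-set and now contains $e$, putting it in $\mathcal{F}_{1}(T,e)$. For (2), take a $\gamma'_{\overline{1}}(T,e)$-set $F$; here $e$ is the unique isolated edge of $F$, and $\mathcal{F}_{1}(T,e) \neq \emptyset$ rules out $T = K_{2}$, so $e$ has a neighbour $e'$ in $T$. Adjoining $e'$ removes the only isolated edge and preserves the previous edge domination, hence $F \cup \{e'\} \in \mathcal{F}_{1}(T,e)$. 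For (3) and (4), start from a $\gamma'_{\overline{0}}(T,e)$-set $F$; by definition no edge of $F$ is adjacent to $e$ and $e \notin F$. In (3), I adjoin both $e$ and some edge $e'$ of $T$ adjacent to $e$, giving a TED-set of $T$ that contains $e$ of size $|F|+2$. In (4), I adjoin just $e$: it is isolated in $F \cup \{e\}$, and it is the unique isolated edge because every other edge of $F$ is already adjacent to some edge of $F$ in $T-e$, so $F \cup \{e\} \in \mathcal{F}_{\overline{1}}(T,e)$.

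I do not anticipate any serious obstacle. The only two points that need verification are (a) the edges being adjoined are genuinely new, which follows in each case from the defining property of the source set (in (3)--(4), for instance, $e$ is undominated by $F$, so neither $e$ nor any edge adjacent to $e$ can lie in $F$), and (b) the augmented set actually satisfies the target definition, which is a direct check against the TED or ED requirement. Summing cardinalities then yields exactly the four claimed bounds.
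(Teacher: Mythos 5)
Your proposal is correct and follows essentially the same route as the paper: in each case you augment a minimum set of the right-hand type by one or two edges ($e$ and/or a neighbour $e'$ of $e$) and verify the resulting set has the left-hand type. Your extra checks that the adjoined edges are not already present and that the unique-isolated-edge condition holds in (4) are exactly the details implicit in the paper's argument.
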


\begin{proof} Let $e'$ be any edge in $N(e)$.

(1) Let $F\in \mathcal{F}_{0}(T,e)$. Then there exists an edge $e''\in F$ adjacent to $e$ and further $F+e$ is a TED set of $T$ containing $e$. Therefore $\gamma'_{1}(T,e) \leqslant \gamma'_{0}(T,e)+1$.

(2) Let $F_{0}\in \mathcal{F}_{\overline{1}}(T,e)$. Then $e\in F_{0}$ and $N(e)\cap F_{0}=\emptyset$ by the definition of $\mathcal{F}_{\overline{1}}(T,e)$. $F_{0}+e'$ is a TED-set of containing $e$. Therefore $\gamma'_{t,1}(T,e)\leqslant \gamma'_{\overline{1}}(T,e)+1$.

(3) Let $F_{1}\in \mathcal{F}_{\overline{0}}(T,e)$. Then $N[e]\cap F_{1}= \emptyset$ by the definition of $\mathcal{F}_{\overline{0}}(T,e)$. $F_{1}+e +e'$ is a TED-set of $T$ containing $e$. Thus $\gamma'_{1}(T,e)\leqslant \gamma'_{t,\overline{0}}(T,e)+2$.

(4) Let $F_{2}\in \mathcal{F}_{\overline{0}}(T,e)$. Then $F_2+ e$ is an ED-set of $T$ with a unique isolated edge $e$ by the definition of $\mathcal{F}_{\overline{1}}(T,e)$. Thus $\gamma'_{\overline{1}}(T,e)\leqslant \gamma'_{t,\overline{0}}(T,e)+1$.
\end{proof}

Before giving the dynamic programming algorithm, we designed an edge data structure as follows.

Root the tree $T$ at any leaf, say $r$. The {\em height}, denoted by $h$, of $T$ is the maximum distance between $r$ and all other vertices of $T$. The {\em level} $i$ $(0\leqslant i\leqslant h)$ is the set of vertices of $T$ with a distance $i$ from $r$.

For such a rooted tree $T$ of order $n+1$, let us label the edges of $T$ as $1, 2, \ldots, n$. We go through every level from  $h$ to 1. For each $i$, $1\leqslant i\leqslant h$, we traverse the edges connecting the vertices on $i$ and $i-1$ in any order,  from left to right. We list the fathers of all edges of $T$ (the edge numbered $n$ has no father by writing father $[n]=0$ ), so we can use a data structure called an {\em edge parent array} to represent $T$. 
Let $e^0$ be a non-leaf edge in rooted tree $T$, $u$ the endpoint of $e^0$ away from the root. Denote by $N_c(e^0)$ the set of neighbors of $e^0$ with endpoints $u$, called {\em children neighbors} of $e^0$, say  $\{e^1, e^2, \ldots, e^q\}$ for some integer $q$. For $0\leqslant j\leqslant q$, let $T^j$ be the component containing $e^j$ of $T\setminus (\{e^0, e^1, \ldots, e^q\}\setminus \{e^j\})$.

\begin{theorem}
Let $T$ be a rooted tree with a non-leaf edge $e^0$ and $N_c(e^0)=\{e_1, e_2, \ldots, e_q\}$ for some integer $q \geqslant 1$. For $0\leqslant j\leqslant q$, $T^j$ are defined as above, and denote
\setlength{\abovedisplayskip}{1pt}
\setlength{\belowdisplayskip}{1.5pt}
\begin{align*}
\theta_{j}:=&\min\{\gamma'_{1}(T^{j}, e^{j}),\gamma'_{0}(T^{j}, e^{j}), \gamma'_{\overline{1}}(T^{j},e^{j}),\gamma'_{\overline{0}}(T^{j}, e^{j})\};\\
A_1:=&\{j\in\{1,2,\ldots,q\}|\theta_{j}=\gamma'_{1}(T^{j}, e^{j}) \};\\
A_2:=&\{j\in\{1,2,\ldots,q\}|\theta_{j}=\gamma'_{0}(T^{j}, e^{j}) \};\\
A_3:=&\{j\in\{1,2,\ldots,q\}|\theta_{j}=\gamma'_{\overline{1}}(T^{j}, e^{j}) \};\\
A_4:=&\{j\in\{1,2,\ldots,q\}|\theta_{j}=\gamma'_{\overline{0}}(T^{j}, e^{j}) \}.\\
\end{align*}
Then
\begin{small}
\begin{align*}
(1). &~\gamma'_{1}(T, e^0)=
\begin{cases}
\min\{\gamma'_{1}(T^0, e^0),\gamma'_{\overline{1}}(T^0, e^0)\}+\sum\limits^{q}_{j=1}\theta_{j}, &\substack{\text{if}~A_1\cup A_3\neq\emptyset};\\
\min\{\gamma'_{1}(T^0, e^0), \gamma'_{\overline{1}}(T^0, e^0)+1\}+
 \sum\limits^{q}_{\substack{j=1}}\theta_{j}, &\substack{\text{if} ~A_1\cup A_3= \emptyset.}
\end{cases}
\\
(2). & ~\gamma'_{0}(T, e^0)=
\begin{cases}
\min\{\gamma'_{0}(T^0, e^0), \gamma'_{\overline{0}}(T^0,e^0)\}+\sum\limits^{q}_{j=1}\theta_{j}, &\substack{\text{if}~A_1\neq\emptyset~or ~|A_3|\geqslant 2;}\\
\min\{\gamma'_{0}(T^0, e^0), \gamma'_{\overline{0}}(T^0, e^0)\}+
 \sum\limits^{q}_{\scriptsize{j=1}}\theta_{j}+1,& \substack{\text{if}~ A_1=\emptyset~and~|A_3|=1 ~or~\\ A_1=A_3=\emptyset, ~A_2\neq \emptyset, ~A_4\neq \emptyset;}\\
\min\{\gamma'_{0}(T^0, e^0),\gamma'_{\overline{0}}(T^0, e^0)+1\}+\sum\limits^{q}_{\substack{j=1}}\theta_{j}, &\substack{\text{if } A_1=A_3=A_4=\emptyset;}\\
\min\{\gamma'_{0}(T^0, e^0),\gamma'_{\overline{0}}(T^0, e^0)\}+\sum\limits^{q}_{j=1}\theta_{j} +1,&\substack{\text{if}~ A_1=A_2=A_3=\emptyset,~ and~there~is~ j\in A_4\\~such~that~ \gamma'_{1}(T^{j},e^{j})-\gamma'_{\overline{0}}(T^{j},e^{j})=1; }\\
\min\{\gamma'_{0}(T^0, e^0),\gamma'_{\overline{0}}(T^0, e^0)\}+\sum\limits^{q}_{j=1}\theta_{j} +2,
& \substack{\text{if}~ A_1=A_2=A_3=\emptyset~and~ any~j\in A_4,\\ \gamma'_{1}(T^{j},e^{j})-\gamma'_{\overline{0}}(T^{j},e^{j})=2. }\\
\end{cases}
\\
(3). &~\gamma'_{\overline{1}}(T, e^0)= \gamma'_{\overline{1}}(T^0, e^0)+\sum\limits^{q}_{j=1} \min \{\gamma'_{0}(T^{j}, e^{j}), \gamma'_{\overline{0}}(T^{j}, e^{j})\};
\\
(4). &~\gamma'_{\overline{0}}(T, e^0)= \gamma'_{\overline{0}}(T^0,e^0)+\sum\limits^{q}_{j=1} \gamma'_{0}(T^{j}, e^{j}).
\end{align*}
\end{small}
\end{theorem}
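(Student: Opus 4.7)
The plan is to verify each of the four equalities by proving both inequalities, exploiting the disjoint decomposition $E(T)=E(T^0)\sqcup E(T^1)\sqcup\cdots\sqcup E(T^q)$. The pivotal structural fact is that for $j\geq 1$ every edge of $E(T^j)\setminus\{e^j\}$ has all its $T$-neighbours inside $E(T^j)$, and the only $T$-neighbours of $e^j$ outside $E(T^j)$ are $\{e^0,e^1,\ldots,e^q\}\setminus\{e^j\}$, all meeting at the common vertex $u$; symmetrically, edges of $E(T^0)\setminus\{e^0\}$ have all neighbours in $E(T^0)$, while $e^0$'s external neighbours are exactly $e^1,\ldots,e^q$. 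Writing $F^j:=F\cap E(T^j)$, every domination condition on an edge outside $\{e^0,e^1,\ldots,e^q\}$ is captured by the restriction $F^j$ containing it, so the problem reduces to one status choice per subtree together with a small number of cross-subtree constraints at~$u$.

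I would begin with (4): for $\leq$, pick minimum-size representatives of $\mathcal{F}_{\overline{0}}(T^0,e^0)$ and each $\mathcal{F}_0(T^j,e^j)$ and verify by the adjacency remark that their union lies in $\mathcal{F}_{\overline{0}}(T,e^0)$; conversely, an optimal $F\in\mathcal{F}_{\overline{0}}(T,e^0)$ forces every $e^j\notin F$ (because $e^j\in N(e^0)$ and $e^0$ is undominated), so $F^j\in\mathcal{F}_0(T^j,e^j)$ and $F^0\in\mathcal{F}_{\overline{0}}(T^0,e^0)$ follow directly. Formula (3) is parallel, except that now $e^0\in F$ supplies a free dominator for every $e^j$ from outside, so each $F^j$ may lie in either $\mathcal{F}_0$ or $\mathcal{F}_{\overline{0}}$, giving the pointwise minimum. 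For (1) and (2) the same decomposition works but two cross-subtree interactions must be tracked: (i) $e^0$ may need an external dominator, namely some $e^j\in F$, and (ii) whenever $F^j\in\mathcal{F}_{\overline{1}}\cup\mathcal{F}_{\overline{0}}$, the required neighbour or dominator of $e^j$ must come from $F\cap(\{e^0\}\cup\{e^i:i\neq j\})$. In (1), having $e^0\in F$ trivialises (ii), so the split hinges only on whether $F^0\in\mathcal{F}_{\overline{1}}(T^0,e^0)$ needs an external partner; this is free when $A_1\cup A_3\neq\emptyset$, and otherwise Lemma~\ref{ineq:ParaRela}(1)--(4) shows that the cheapest upgrade costs exactly $+1$. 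In (2) both interactions are active, and the case-split of the statement follows from how many $e^j$'s must be forced into $F$.

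The main obstacle is the final two sub-cases of (2), where $A_1=A_2=A_3=\emptyset$ and every $j\in A_4$: no $\theta_j$-optimum contains $e^j$, yet at least one $e^j\in F$ is needed to dominate the remaining $e^i$'s and, if $F^0\in\mathcal{F}_{\overline{0}}$, to dominate $e^0$. If some $j\in A_4$ admits $\gamma'_1(T^j,e^j)=\gamma'_{\overline{0}}(T^j,e^j)+1$, a single $\mathcal{F}_1$-upgrade on that $T^j$ suffices at extra cost~$1$; otherwise every $j\in A_4$ has $\gamma'_1(T^j,e^j)=\gamma'_{\overline{0}}(T^j,e^j)+2$, and although Lemma~\ref{ineq:ParaRela}(4) still gives $\gamma'_{\overline{1}}(T^j,e^j)\leq\theta_j+1$, any $\mathcal{F}_{\overline{1}}$-upgrade leaves $e^j$ isolated and demands a partner $e^i\in F$, forcing either two $\mathcal{F}_{\overline{1}}$-upgrades or a single $\mathcal{F}_1$-upgrade, both at extra cost~$2$. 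The matching lower bounds in all sub-cases follow from the defining strict inequalities $\gamma'_1(T^j,e^j),\gamma'_{\overline{1}}(T^j,e^j)>\theta_j$ whenever $j\notin A_1\cup A_3$, combined with Lemma~\ref{ineq:ParaRela}, which together rule out any cheaper correction.
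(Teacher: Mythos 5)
Your strategy is the same as the paper's: decompose $T$ into $T^0,T^1,\ldots,T^q$ via the common vertex $u$, note that the restriction of any admissible set to each $T^j$ lies in one of the four families, recombine minimum representatives, and use Lemma \ref{ineq:ParaRela} to price the necessary upgrades. Your treatment of (3), (4), (1) and of the last two sub-cases of (2) is sound and, for the final sub-case, actually more explicit than the paper's Case 2.4.

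The genuine gap is in the middle rows of (2), which you dispose of with ``the case-split of the statement follows from how many $e^j$'s must be forced into $F$'' and a lower bound drawn from ``the defining strict inequalities whenever $j\notin A_1\cup A_3$''. That step fails because $A_1,\ldots,A_4$ need not be disjoint: $j\notin A_1\cup A_3$ does not force $T^j$ to receive a $\overline{0}$- or $\overline{1}$-type set that needs help at $u$; if $j\in A_2$ it can receive a $\gamma'_0(T^j,e^j)$-set, which is self-sufficient. Concretely, let $T=P_8$ rooted at a leaf $r$ and let $e^0$ be the third edge from $r$, so $q=1$ and $T^1$ is the path on the last five vertices. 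Then $\gamma'_0(T^1,e^1)=\gamma'_{\overline{0}}(T^1,e^1)=2$ and $\gamma'_1(T^1,e^1)=\gamma'_{\overline{1}}(T^1,e^1)=3$, so $A_1=A_3=\emptyset$ and $A_2=A_4=\{1\}$; the second row of (2) then predicts $\min\{\gamma'_0(T^0,e^0),\gamma'_{\overline{0}}(T^0,e^0)\}+\theta_1+1=2+2+1=5$, yet the union of the $\gamma'_0(T^0,e^0)$-set with the $\gamma'_0(T^1,e^1)$-set is a TED-set of $T$ avoiding $e^0$ of size $4$, and $4$ is optimal. The same overlap phenomenon defeats the row ``$A_1=\emptyset$, $|A_3|=1$'' when the unique index of $A_3$ also lies in $A_2$, and the last row when $q=1$ and $\gamma'_0(T^1,e^1)=\theta_1+1$. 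So at these points you are not merely omitting routine detail: the lower bounds you assert cannot be proved as stated. (To be fair, the paper's own Cases 2.2--2.4 make the same tacit assumption that each $T^j$ must be assigned the type realizing $\theta_j$, so its proof and its displayed case conditions share this defect.) A correct argument must split on whether a \emph{self-sufficient} assignment exists --- some $j\in A_1$, or two indices in $A_3$, or every $j$ in $A_2$ with $T^0$ served by a $\gamma'_0(T^0,e^0)$-set --- i.e., the classes must be treated as prioritized (disjointified), not merely tested for non-emptiness; your proposal, like the paper, does not do this.
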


\begin{proof}
For the convenience, for $0\leqslant j\leqslant q$, we define $F_{T^j}=F_T\cap T^j$ for an edge subset $F_T$ of $T$ and thus $|F_T|=\sum_{j=0}^q |F_{T^j}|$. Especially, for $0\leqslant j\leqslant q$,  if $F_T$ is a TED-set of $T$, then $F_{T^j}\in \mathcal{F}_{1}(T^{j},e^{j})\cup \mathcal{F}_{0}(T^{j},e^{j})\cup
\mathcal{F}_{\overline{1}}(T^{j},e^{j})\cup
\mathcal{F}_{\overline{0}}(T^{j},e^{j})$ by the definition. Denote $\overline{N_c}(e^0)=N(e^0)\setminus N_c(e^0)$.

(1). Let $F_{T}$ be a $\gamma'_{ 1}(T, e^0)$-set.

\noindent{\bf Case 1.1.} $\overline{N_c}(e^0)\cap F_{T}\neq \emptyset$.

In this case, the restriction $F_{T^0}$ of $F_T$ on $T^0$ is a TED-set of $T^0$, further a $\gamma'_{ 1}(T^0,e^0)$-set. For any $j$ ($1\leqslant j\leqslant q$), $F_{T^j}$ is a set of size $\theta_j$ in $\mathcal{F}_{1}(T^{j},e^{j})\cup \mathcal{F}_{0}(T^{j},e^{j})\cup
\mathcal{F}_{\overline{1}}(T^{j},e^{j})\cup
\mathcal{F}_{\overline{0}}(T^{j},e^{j})$ by the definition of $F_{T^j}$. So
\setlength{\abovedisplayskip}{1pt}
\setlength{\belowdisplayskip}{1.5pt}
\begin{align*}
\gamma'_{1}(T, e^0)= \gamma'_{1}(T^0, e^0)+ \sum\limits^{q}_{j=1}\theta_{j}.
\end{align*}

\noindent{\bf Case 1.2.} $\overline{N_c}(e^0)\cap F_{T}= \emptyset$.

In this case, $F_{T^0}\in\mathcal{F}_{\overline{1}}(T^0, e^0)$. Thus
\setlength{\abovedisplayskip}{1pt}
\setlength{\belowdisplayskip}{1.5pt}
\begin{equation}\label{ineq:case12}
\gamma'_{1}(T, e^0)\geqslant \gamma'_{ \overline{1}}(T^0, e^0)+\sum_{j=1}^q \theta_j.
\end{equation}
In order to connect $e^0$ in $F_T$, there exists some $1\leqslant j\leqslant q$ such that $e^j\in F_{T^j}$.

\noindent{\bf Subcase 1.2.1.} $A_1\cup A_3\neq \emptyset$, say, $j_1\in A_1$.

We take any $\gamma'_{ \overline{1}}(T^0, e^0)$-set $B^0$ and $\gamma'_{1}(T^{j_1}, e^{j_1})$-set $B^{j_1}$. For any $j\neq j_1$ $(1\leqslant j\leqslant q)$, we choose an edge set $B^j$ of size $\theta_j$ in $\mathcal{F}_{1}(T^{j},e^{j})\cup \mathcal{F}_{0}(T^{j},e^{j})\cup
\mathcal{F}_{\overline{1}}(T^{j},e^{j})\cup
\mathcal{F}_{\overline{0}}(T^{j},e^{j})$.  Then $\cup_{j=0}^q B^j$ is a TED-set of $T$ of size $\gamma'_{ \overline{1}}(T^0, e^0)+\sum_{j=1}^q \theta_j$ satisfying $|N
(e_0)\cap (\cup_{j=0}^q B^j)|\geqslant1$. Combined with \eqref{ineq:case12}, we have

\setlength{\abovedisplayskip}{1pt}
\setlength{\belowdisplayskip}{1.5pt}
\begin{align*}
\gamma'_{1}(T, e^0)= \gamma'_{\overline{1}}(T^0, e^0)+
\sum\limits^{q}_{j=1}\theta_{j}.
\end{align*}

\noindent{\bf Subcase 1.2.2.} $A_1\cup A_3=\emptyset$, i.e., $A_1=\emptyset$ and $A_3=\emptyset$.

In this subcase, equality does not hold in Eq. \eqref{ineq:case12}. If $A_2\neq \emptyset$, combined with Lemma \ref{Le:foursetsnonEmp}, Lemma \ref{ineq:ParaRela} (1) and $A_1=\emptyset$, for any $j\in A_2$, 
$\gamma'_{1}(T^j, e^j)= \gamma'_{0}(T^j, e^j)+1=\theta_j+1$. Otherwise, If $A_2=\emptyset$, then $A_4=\{1, 2, \ldots, q\}(\neq \emptyset).$ Combined with Lemma \ref{ineq:ParaRela} (4) and $A_3=\emptyset$, for any $j\in A_4$, $\gamma'_{\overline{1}}(T^{j}, e^{j})= \gamma'_{\overline{0}}(T^{j}, e^{j})+1=\theta_j+1$.  Similar to  Subcase 1.2.1, whatever which case it is, we can construct a TED-set of $T$ of size $\gamma'_{\overline{1}}(T^0, e^0)+\sum_{j=1}^q \theta_j+1$ satisfying $|N(e_0)\cap (\cup_{j=0}^q B^j)|\geqslant1$. So
\setlength{\abovedisplayskip}{1pt}
\setlength{\belowdisplayskip}{1.5pt}
 \begin{align*}
 \gamma'_{1}(T, e^0)=
 \gamma'_{\overline{1}}(T^0, e^0)+
 \sum\limits^{q}_{\substack{j=1}}\theta_{j}+1.
 \end{align*}

(2). Let $F_{T}$ be a $\gamma'_{ 0}(T, e^0)$-set. \\

If $\overline{N_c}(e^0)\cap F_{T}\neq \emptyset$, then the restriction $F_{T^0}$ of $F_T$ on $T^0$ is a TED-set of $T^0$, further a $\gamma'_{0}(T^0, e^0)$-set. So
\setlength{\abovedisplayskip}{1pt}
\setlength{\belowdisplayskip}{1.5pt}
\begin{equation}\label{ineq:case01}
\gamma'_{0}(T, e^0)\geqslant \gamma'_{0}(T^0, e^0)+\sum_{j=1}^q \theta_j.
\end{equation}

If  $\overline{N_c}(e^0)\cap F_{T}= \emptyset$, then the restriction $F_{T^0}$ of $F_T$ on $T^0$ belongs to $\mathcal{F}_{\overline{0}}(T^0, e^0)$, further a $\gamma'_{\overline{0}}(T^0, e^0)$-set. So
\setlength{\abovedisplayskip}{1pt}
\setlength{\belowdisplayskip}{1.5pt}
\begin{equation}\label{ineq:case02}
\gamma'_{0}(T, e^0)\geqslant \gamma'_{\overline{0}}(T^0, e^0)+\sum_{j=1}^q \theta_j.
\end{equation}

\noindent{\bf Case 2.1.} $A_1\neq\emptyset$, say $j_{1}\in A_1$.

We take any $\gamma'_{0}(T^0, e^0)$-set in the case of $\overline{N_c}(e^0)\cap F_{T}\neq \emptyset$ and any $\gamma'_{\overline{0}}(T^0, e^0)$-set in the case of $\overline{N_c}(e^0)\cap F_{T}= \emptyset$. Denoted by $B^0$, any $\gamma'_{1}(T^{j_1}, e^{j_1})$-set $B^{j_1}$, and for any $j\neq j_1$ $(1\leqslant j\leqslant q)$, an edge set $B^j$ of size $\theta_j$ in $\mathcal{F}_{1}(T^{j},e^{j})\cup \mathcal{F}_{0}(T^{j},e^{j})\cup
\mathcal{F}_{\overline{1}}(T^{j},e^{j})\cup
\mathcal{F}_{\overline{0}}(T^{j},e^{j})$.  Thus $\cup_{j=0}^q B^j$ is a TED-set of $T$ of size $\gamma'_{0}(T^0, e^0)+\sum_{j=1}^q \theta_j$ in the case of $\overline{N_c}(e^0)\cap F_{T}\neq \emptyset$ or $\gamma'_{\overline{0}}(T^0, e^0)+\sum_{j=1}^q \theta_j$ in the case of $\overline{N_c}(e^0)\cap F_{T}= \emptyset$ satisfying $|N(e_0)\cap (\cup_{j=0}^q B^j)|\neq 0$. Combined with \eqref{ineq:case01} and \eqref{ineq:case02}, we have
\setlength{\abovedisplayskip}{1pt}
\setlength{\belowdisplayskip}{1.5pt}
\begin{align*}
\gamma'_{0}(T, e^0)=\mbox{min}\{\gamma'_{0}(T^0, e^0), \gamma'_{\overline{0}}(T^0, e^0)\}+ \sum\limits^{q}_{j=1}\theta_{j}.
\end{align*}

\noindent{\bf Case 2.2.} $A_1=\emptyset$ and $A_3\neq \emptyset$.

If $|A_3|\geqslant 2$, then, for $j_1, j_2\in A_2$, we can take a $\gamma'_{\overline{1}}(T^{j_1}, e^{j_1})$-set $B^{j_1}$ and a $\gamma'_{\overline{1}}(T^{j_2}, e^{j_2})$-set $B^{j_2}$. The others $B^0$ and $B^j$ for $1\leqslant j\leqslant q$ and $j\neq j_1, j_2$ are taken as Subcase 2.1. Similarly,  we can obtain
\setlength{\abovedisplayskip}{1pt}
\setlength{\belowdisplayskip}{1.5pt}
\begin{align*}
\gamma'_{0}(T, e^0)=\mbox{min}\{\gamma'_{0}(T^0, e^0), \gamma'_{\overline{0}}(T^0, e^0)\}+ \sum\limits^{q}_{j=1}\theta_{j}.
\end{align*}

If $|A_3|=1$, say $A_3=\{j_{3}\}$, then neither Eq. \eqref{ineq:case01} nor Eq. \eqref{ineq:case02} take equality in this case.  According to Lemma \ref{ineq:ParaRela} (2) and $A_1=\emptyset$, $\gamma'_{1}(T^{j_3}, e^{j_3})=\gamma'_{\overline{1}}(T^{j_3}, e^{j_2})+1= \theta_{j_3} + 1$. We take a $\gamma'_{1}(T^{j_3}, e^{j_3})$-set $B^{j_3}$. The others $B^0$ and $B^j$ for $1\leqslant j\leqslant q$ and $j\neq j_3$ are taken as Subcase 2.1. Thus $\cup_{j=0}^q B^j$ is a TED-set of $T$ of size $\gamma'_{0}(T^0, e^0)+\sum_{j=1}^q \theta_j+1$ in the case of $\overline{N_c}(e^0)\cap F_{T}\neq \emptyset$ or $\gamma'_{\overline{0}}(T^0, e^0)+\sum_{j=1}^q \theta_j+1$ in the case of $\overline{N_c}(e^0)\cap F_{T}= \emptyset$ satisfying $|N(e_0)\cap (\cup_{j=0}^q B^j)|\neq 0$. So
\setlength{\abovedisplayskip}{1pt}
\setlength{\belowdisplayskip}{1.5pt}
\begin{align*}
\gamma'_{0}(T, e^0)=\mbox{min}\{\gamma'_{0}(T^0, e^0), \gamma'_{\overline{0}}(T^0, e^0)\}+ \sum\limits^{q}_{j=1}\theta_{j}+1.
\end{align*}

\noindent{\bf Case 2.3.}  $A_1=A_3=\emptyset$ and $A_2\neq \emptyset$.

If $A_4=\emptyset$, i.e., $A_2=\{1, 2, \ldots, q\}$, and $\overline{N_c}(e^0)\cap F_{T}\neq \emptyset$, then we take any $\gamma'_{0}(T^0, e^0)$-set $B^0$. For $1\leqslant j \leqslant q$, we take any $\gamma'_{0}(T^{j}, e^j)$-set $B^j$. Thus $\cup_{j=0}^q B^j$ is a TED-set of $T$ of size $\gamma'_{0}(T^0, e^0)+\sum_{j=1}^q \theta_j$.

If $A_4=\emptyset$ and $\overline{N_c}(e^0)\cap F_{T}= \emptyset$, then equality does not hold in Eq. \eqref{ineq:case02}. By Lemma \ref{ineq:ParaRela} (1) and $A_1=\emptyset$, for any $ j\in A_2, \gamma'_{1}(T^j, e^j)= \gamma'_{0}(T^j, e^j)+1=\theta_j+1$. We take a $\gamma'_{1}(T^{j_1}, e^{j_1})$-set $B^{j_1}$ for some $1\leqslant j_1 \leqslant q$ and others $B^j$ for any $0\leqslant j\leqslant q$ and $j\neq {j_{1}}$ are taken as in Subcase 2.1. Thus $\cup_{j=0}^q B^j$ is a TED-set of $T$ of size $\gamma'_{\overline{0}}(T^0, e^0)+\sum_{j=1}^q \theta_j+1$.

If $A_4\neq 0$, then equality does not hold in Eqs. \eqref{ineq:case01} and \eqref{ineq:case02}. By Lemma \ref{ineq:ParaRela} (1) and $A_1=\emptyset$, for any $ j\in A_2, \gamma'_{1}(T^j, e^j)= \gamma'_{0}(T^j, e^j)+1=\theta_j+1$. We take any $\gamma'_1(T^
{j_{1}}, e^{j_{1}})$-set $B^{j_{1}}$ for some $1\leqslant {j_{1}}\leqslant q$ and the others $B^j$ for any $0\leqslant j\leqslant q$ and $j\neq {j_{1}}$ are taken as in Subcase 2.1. Thus $\cup_{j=0}^q B^j$ is a TED-set of $T$ of size $\gamma'_{0}(T^0, e^0)+\sum_{j=1}^q \theta_j+1$ in the case of $\overline{N_c}(e^0)\cap F_{T}\neq \emptyset$ or $\gamma'_{\overline{0}}(T^0, e^0)+\sum_{j=1}^q \theta_j+1$ in the case of $\overline{N_c}(e^0)\cap F_{T}= \emptyset$.

So
\setlength{\abovedisplayskip}{1pt}
\setlength{\belowdisplayskip}{1.5pt}
 \begin{align*}
 \gamma'_{0}(T, e^0)=
 \begin{cases}
\min\{\gamma'_{0}(T^0, e^0), \gamma'_{\overline{0}}(T^0, e^0)+1\} +\sum\limits^{q}_{j=1}\theta_{j},&\substack{\text{ if } A_4=\emptyset};\\
\min\{\gamma'_{0}(T^0, e^0), \gamma'_{\overline{0}}(T^0, e^0)\}+\sum\limits^{q}_{\substack{j=1}}\theta_{j}+1,&\substack{\text{ if}~A_4\neq\emptyset}.
\end{cases}
 \end{align*}

\noindent{\bf Case 2.4.} $A_1=A_2=A_3=\emptyset$, i.e., $A_4=\{1, 2, \ldots, q\}$.

In this case, to obtain a $\gamma'_0(T, e^0)$-set, we need one $\gamma'_1(T^{j'}, e^{j'})$-set or at least two $\gamma'_{\overline{1}}(T^{j''}, e^{j''})$-sets for $1\leqslant j''\leqslant q$. So, equality does not hold in Eqs. \eqref{ineq:case01} and \eqref{ineq:case02}. By Lemma \ref{ineq:ParaRela} (4), for each $\forall j\in A_4$, we have $\gamma'_{\overline{0}}(T^{j}, e^{j})+1\leqslant\gamma'_{1}(T^{j}, e^{j})\leqslant \gamma'_{\overline{0}}(T^{j}, e^{j})+2$.
If there exists $j_{4}\in A_4$ such that $\gamma'_{1}(T^{j_{4}},e^{j_{4}})-\gamma'_{\overline{0}}(T^{j_{4}},e^{j_{4}})=1$, then we can take a $\gamma'_{1}(T^{j_{4}}, e^{j_{4}})$-set $B^{j_4}$ and the others $B^j$ for $0\leqslant j\leqslant q$ and $j\neq j^4$ are taken as in Subcase 2.1.  Thus $\cup_{j=0}^q B^j$ is a TED-set of $T$ of size $\gamma'_{0}(T^0, e^0)+\sum_{j=1}^q \theta_j+1$ in the case of $\overline{N_c}(e^0)\cap F_{T}\neq \emptyset$ or $\gamma'_{\overline{0}}(T^0, e^0)+\sum_{j=1}^q \theta_j+1$ in the case of $\overline{N_c}(e)\cap F_{T}= \emptyset$. Otherwise, for all $j$, $\gamma'_{1}(T^j, e^{j})-\gamma'_{\overline{0}}(T^j, e^j)=2.$  Thus, the left-hand sides in both Eqs. \eqref{ineq:case01} and \eqref{ineq:case02} are at least two more than the right-hand sides. We can take a $\gamma'_{1}(T^{j_{4}}, e^{j_{4}})$-set $B^{j_4}$ and the others $B^j$ for $0\leqslant j\leqslant q$ and $j\neq j^4$ are taken as in Subcase 2.1.  Thus $\cup_{j=0}^q B^j$ is a TED-set of $T$ of size $\gamma'_{0}(T^0, e^0)+\sum_{j=1}^q \theta_j+2$ in the case of $\overline{N_c}(e^0)\cap F_{T}\neq \emptyset$ or $\gamma'_{\overline{0}}(T^0, e^0)+\sum_{j=1}^q \theta_j+1$ in the case of $\overline{N_c}(e^0)\cap F_{T}= \emptyset$.  Therefore

\setlength{\abovedisplayskip}{1pt}
\setlength{\belowdisplayskip}{1.5pt}
\begin{align*}
 \gamma'_{0}(T, e^0)=
\begin{cases}
\min\{\gamma'_{0}(T^0, e^0), \gamma'_{\overline{0}}(T^0, e^0)\}+\sum\limits^{q}_{\substack{j=1}}\theta_{j}+1,&\substack{there~is~j~such~that~ \gamma'_{1}(T^{j}, e^{j})-\gamma'_{t,\overline{0}}(T^{j},e^j)=1};\\
\min\{\gamma'_{0}(T^0, e^0), \gamma'_{\overline{0}}(T^0, e^0)\}+\sum\limits^{q}_{j=1}\theta_{j}+2,&\substack{for~any~j~such~that~ \gamma'_{1}(T^{j}, e^{j})-\gamma'_{t,\overline{0}}(T^{j},e^j)=2}.
\end{cases}
 \end{align*}

(3). Let $F_{T}$ be a  $\gamma'_{\overline{1}}(T, e)$-set.

 The restriction $F_{T^0}$ of $F_T$ on $T^0$ belongs to $\mathcal{F}_{\overline{1}}(T^0, e)$, for $1\leqslant j\leqslant q$, the restriction $F_{T^j}$ of $F_T$ on $T^j$ belongs to $\mathcal{F}_0(F^j, e^j)$ or $\mathcal{F}_{\overline{0}}(F^j, e^j)$, the converse also holds. Therefore
 \setlength{\abovedisplayskip}{1pt}
\setlength{\belowdisplayskip}{1.5pt}
\begin{align*}
\gamma'_{\overline{1}}(T, e)= \gamma'_{\overline{1}}(T^0, e)+\sum\limits^{q}_{j=1} \mbox{min} \{\gamma'_{0}(T^{j}, e^{j}), \gamma'_{\overline{0}}(T^{j}, e^{j})\}.
\end{align*}

(4). Let $F_{T}$ be a  $\gamma'_{\overline{0}}(T, e)$-set.\\
The restriction $F_{T^0}$ of $F_T$ on $T^0$ belongs to $\mathcal{F}_{\overline{0}}(T^0, e)$, for $1\leqslant j\leqslant q$, the restriction $F_{T^j}$ of $F_T$ on $T^j$ belongs to $\mathcal{F}_0(F^j, e^j)$, the converse also holds. Therefore
\setlength{\abovedisplayskip}{1pt}
\setlength{\belowdisplayskip}{1.5pt}
\begin{align*}
\gamma'_{\overline{0}}(T, e)= \gamma'_{\overline{0}}(T^0, e)+\sum\limits^{q}_{j=1} \gamma'_{0}(T^{j}, e^{j}).
\end{align*}
\end{proof}

By Theorem 3.1, we give algorithms as follows.

\begin{algorithm}[H]
\algsetup{linenosize=\small} \scriptsize
\caption{ Determine the value of $\gamma'_{1}(T, i')$.}
\begin{algorithmic}[1]
\REQUIRE an edge $i$ of a rooted tree $T$ which represent by its edge parent array $[1,2,3,\ldots,n]$.
\ENSURE $\gamma'_{1}(T,i')$
\STATE $i^{'}\leftarrow father (i)$;
\STATE $N_{c}(i')\leftarrow children (i')$;
\STATE $T^{0}\leftarrow ~the ~component ~containing ~i' ~of ~T-(N_{c}(i')) $;
\FOR{each $j\in N_{c}(i')$}
\STATE $T^{j}\leftarrow ~the ~component ~containing ~j ~of ~T-(N_{c}(i')+i'-j) $;
\ENDFOR
\FOR {each $j\in N_{c}(i')$}
    \STATE $\theta_{j}\leftarrow\min\{\gamma'_{1}(T^{j}, j),\gamma'_{0}(T^{j}, j),\gamma'_{\overline{1}}(T^{j},j),\gamma'_{\overline{0}}(T^{j}, j)\};$
    \STATE  $A_1\leftarrow\{j\in N_{c}(i')|\theta_{j}=\gamma'_{1}(T^{j}, j) \};$
    \STATE  $A_3\leftarrow\{j\in N_{c}(i')|\theta_{j}=\gamma'_{\overline{1}}(T^{j}, j) \};$
\ENDFOR
\IF {$A_1\cup A_3\neq\emptyset$ }
  \STATE  $\gamma'_{1}(T, i')\leftarrow\min\{\gamma'_{1}(T^0, i'),\gamma'_{\overline{1}}(T^0, i')\}+\sum\limits^{}_{j\in N_{c}(i')}\theta_{j}$
 \ELSE
    \STATE $\gamma'_{1}(T, i')\leftarrow\min\{\gamma'_{1}(T^0, i'), \gamma'_{\overline{1}}(T^0, i')+1\}+ \sum\limits^{}_{j\in N_{c}(i')}\theta_{j}$.
\ENDIF
 \end{algorithmic}
 \end{algorithm}

\begin{algorithm}
\algsetup{linenosize=\small} \scriptsize
\caption{ Determine the value of $\gamma'_{\overline{1}}(T, i')$.}
\begin{algorithmic}
\REQUIRE an edge $i$ of a rooted tree $T$ which represent by its edge parent array $[1,2,3,\ldots,n]$.
\ENSURE $\gamma'_{\bar{1}}(T,i')$
\STATE $i^{'}\leftarrow father (i)$;
\STATE $N_{c}(i')\leftarrow children (i')$;
\STATE $T^{0}\leftarrow ~the ~component ~containing ~i' ~of ~T-(N_{c}(i')) $;
\FOR{each $j\in N_{c}(i')$}
\STATE $T^{j}\leftarrow ~the ~component ~containing ~j ~of ~T-(N_{c}(i')+i'-j) $;
\ENDFOR
\STATE $\gamma'_{\overline{1}}(T, i')\leftarrow \gamma'_{\overline{1}}(T^0, i')+\sum\limits^{}_{j\in N_{c}(i')} \min \{\gamma'_{0}(T^{j}, j), \gamma'_{\overline{0}}(T^{j}, j)\}$
\end{algorithmic}
\end{algorithm}

\begin{algorithm}
\algsetup{linenosize=\small} \scriptsize
\caption{Determine the value of $\gamma'_{0}(T, i')$.}
\begin{algorithmic}
\REQUIRE an edge $i$ of a rooted tree $T$ which represent by its edge parent array $[1,2,3,\ldots,n]$.
\ENSURE $\gamma'_{0}(T,i')$;
\STATE $N_{c}(i')\leftarrow children (i')$;
\STATE $T^{0} \leftarrow ~the ~component ~containing ~i' ~of ~T-(N_{c}(i')) $;
\FOR{each $j\in N_{c}(i')$}
\STATE $T^{j}\leftarrow ~the ~component ~containing ~j ~of ~T-(N_{c}(i')+i'-j) $;
\ENDFOR
\FOR {each $j\in N_{c}(i')$}
    \STATE $\theta_{j}\leftarrow\min\{\gamma'_{1}(T^{j}, j),\gamma'_{0}(T^{j}, j), \gamma'_{\overline{1}}(T^{j},j),\gamma'_{\overline{0}}(T^{j}, j)\};$
     \STATE  $A_1 \leftarrow\{j\in N_{c}(i')|\theta_{j}=\gamma'_{1}(T^{j}, j) \};$
     \STATE  $A_2 \leftarrow\{j\in N_{c}(i')|\theta_{j}=\gamma'_{0}(T^{j}, j) \};$
    \STATE  $A_3 \leftarrow\{j\in N_{c}(i')|\theta_{j}=\gamma'_{\overline{1}}(T^{j}, j) \};$
    \STATE  $A_4 \leftarrow \{j\in N_{c}(i')|\theta_{j}=\gamma'_{\overline{0}}(T^{j}, j) \};$
 \ENDFOR

\IF{$A_1\neq\emptyset~or ~|A_3|\geqslant 2$}

\STATE  $\gamma'_{0}(T, i')\leftarrow \min\{\gamma'_{0}(T^0, i'), \gamma'_{\overline{0}}(T^0, i')\}+\sum\limits^{}_{j\in N_{c}(i')}\theta_{j}$

\ELSIF {$A_1=\emptyset~,and~|A_3|=1 ~or~ A_3=\emptyset, ~A_2\neq \emptyset, ~A_4\neq \emptyset$}
\STATE $\gamma'_{0}(T, i')\leftarrow\min\{\gamma'_{0}(T^0, i'), \gamma'_{\overline{0}}(T^0, i')\}+
 \sum\limits^{}_{j\in N_{c}(i')}\theta_{j}+1$

\ELSIF {$A_1=A_3=A_4=\emptyset$}

\STATE $\gamma'_{0}(T, i')\leftarrow\min\{\gamma'_{0}(T^0, i'),\gamma'_{\overline{0}}(T^0, i')+1\}+\sum\limits^{}_{j\in N_{c}(i')}\theta_{j}$
\ELSIF {$ A_1=A_2=A_3=\emptyset,~ and~\exists j\in A_4,~such~that~ \gamma'_{1}(T^{j},j)-\gamma'_{\overline{0}}(T^{j},j)==1$}
\STATE  $\gamma'_{0}(T, i')\leftarrow\min\{\gamma'_{0}(T^0, i'),\gamma'_{\overline{0}}(T^0, i')\}+\sum\limits^{}_{j\in N_{c}(i')}\theta_{j} +1$
\ELSIF {$A_1=A_2=A_3=\emptyset,~and~ \forall j\in A_4, \gamma'_{1}(T^{j},j)-\gamma'_{\overline{0}}(T^{j},j)==2$}
\STATE $\gamma'_{0}(T, i')\leftarrow\min\{\gamma'_{0}(T^0, i'),\gamma'_{\overline{0}}(T^0, i')\}+\sum\limits^{}_{j\in N_{c}(i')} +2$
\ENDIF
\end{algorithmic}
\end{algorithm}

\begin{algorithm}
\algsetup{linenosize=\small} \scriptsize
\caption{ Determine the value of $\gamma'_{\overline{0}}(T, i')$.}
\begin{algorithmic}
\REQUIRE an edge $i$ of an rooted tree $T$ which represent by its edge parent array $[1,2,3,\ldots,n]$.
\ENSURE $\gamma'_{1}(T,i')$
\STATE $i^{'}\leftarrow father (i)$;
\STATE $N_{c}(i')\leftarrow children (i')$;
\STATE $T^{0}\leftarrow ~the ~component ~containing ~i' ~of ~T-(N_{c}(i')) $;
\FOR{each $j\in N_{c}(i')$}
\STATE $T^{j}\leftarrow ~the ~component ~containing ~j ~of ~T-(N_{c}(i')+i'-j) $;
\ENDFOR

\STATE
$\gamma'_{\overline{0}}(T, i')\leftarrow \gamma'_{\overline{0}}(T^0, i')+\sum\limits^{m}_{j\in N_{c}(i')} \gamma'_{0}(T^{j}, j)$
\end{algorithmic}
\end{algorithm}

\begin{algorithm}
\algsetup{linenosize=\small} \scriptsize
\caption{Determine the total edge domination number of a tree. }
\begin{algorithmic}[1]
  \REQUIRE an edge rooted tree $T$ represent by its edge parent array $[1,2,3,\ldots,n]$.
  \ENSURE a minimum total edge domination number of $T$.
  \FOR {each $i\in[1,n]$ }
    \STATE $\gamma'_{1}(T,1)\leftarrow \infty$; $\gamma'_{0}(T,1) \leftarrow \infty$; $\gamma'_{\overline{1}}(T,1)\leftarrow1$; $\gamma'_{\overline{0}}(T,1)\leftarrow0$;
  \ENDFOR
  \FOR{each $i\in[1,n-1]$ }
   \STATE $i^{'}\leftarrow father (i)$;
    \STATE $N_{c}(i')\leftarrow children (i')$;
    \STATE $T^{0}\leftarrow ~the ~component ~containing ~i' ~of ~T-(N_{c}(i')) $;
    \FOR{each $j\in N_{c}(i')$}
\STATE $T^{j}\leftarrow ~the ~component ~containing ~j ~of ~T-(N_{c}(i')+i'-j) $;
\ENDFOR
    \FOR {each $j\in N_{c}(i')$}
    \STATE $\theta_{j}:=\min\{\gamma'_{1}(T^{j}, j),\gamma'_{0}(T^{j}, j), \gamma'_{\overline{1}}(T^{j},j),\gamma'_{\overline{0}}(T^{j}, j)\};$
     \STATE  $A_1:=\{j\in N_{c}(i')|\theta_{j}=\gamma'_{1}(T^{j}, j) \};$
     \STATE  $A_2:=\{j\in N_{c}(i')|\theta_{j}=\gamma'_{0}(T^{j}, j) \};$
    \STATE  $A_3:=\{j\in N_{c}(i')|\theta_{j}=\gamma'_{\overline{1}}(T^{j}, j) \};$
    \STATE  $A_4:=\{j\in N_{c}(i')|\theta_{j}=\gamma'_{\overline{0}}(T^{j}, j) \};$

    \STATE $\gamma'_{1}(T, i')$=Determine the value of $\gamma'_{1}(T, i')$.
    \STATE $\gamma'_{0}(T, i')$=Determine the value of $\gamma'_{0}(T, i')$.
    \STATE $\gamma'_{\bar{1}}(T, i')$=Determine the value of $\gamma'_{\overline{1}}(T, i')$.
    \STATE $\gamma'_{\bar{1}}(T, i')$=Determine the value of $\gamma'_{\overline{0}}(T, i')$.
   \ENDFOR

  \ENDFOR
  \RETURN $\gamma'(T) = min\{ \gamma'_{1}(T,n), \gamma'_{0}(T,n), \gamma'_{\bar{1}}(T,n), \gamma'_{\bar{0}}(T,n) \}$
 \end{algorithmic}
 \end{algorithm}
\begin{theorem}

Algorithm 5 produces the total edge domination number of a tree in linear-time.
\end{theorem}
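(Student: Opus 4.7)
The plan is to prove correctness and linear running time separately.

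For correctness, I would argue by induction on the levels of the rooted tree $T$ (rooted at the leaf $r$), starting from level $h$ and moving down toward level $1$. The base case handles leaf edges: for a leaf edge $e$ one has $T^0 = \{e\}$ with no children neighbours, so directly from the definitions $\gamma'_1(T, e) = \gamma'_0(T, e) = \infty$, $\gamma'_{\overline{1}}(T, e) = 1$, and $\gamma'_{\overline{0}}(T, e) = 0$, matching the initialisation in lines 1--3 of Algorithm 5. For the inductive step, once the outer loop reaches an edge $i'$, every child neighbour $j \in N_c(i')$ has already been processed (it lies one level deeper in $T$), so by the inductive hypothesis its four stored values are correct. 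Algorithms 1--4 then invoke exactly the case analyses and recurrences established in Theorem 3.1, so the four new values $\gamma'_*(T, i')$ for $* \in \{1, 0, \overline{1}, \overline{0}\}$ are also correct. When the outer loop terminates on the root edge $n$, its four parameters are those of the whole tree $T$, and $\gamma'_t(T) = \min\{\gamma'_1(T, n), \gamma'_0(T, n)\}$ follows, since only sets in $\mathcal{F}_1(T,n) \cup \mathcal{F}_0(T,n)$ are TED-sets of $T$.

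For the running time, the plan is to observe that each edge of $T$ contributes only $O(1)$ work per incidence: once when it plays the role of the current edge $i'$ in the outer loop of Algorithm 5, and once for each appearance as a child neighbour $j \in N_c(i')$ inside the inner loops of Algorithms 1--4. Since every edge has at most one parent edge, the total cost is $O\!\left(\sum_{i'}(1 + |N_c(i')|)\right) = O(|E(T)|) = O(n)$. The per-child updates of $\theta_j$, the indicator sets $A_1, A_2, A_3, A_4$, and the sums and minima required by the formulas of Theorem 3.1 are all $O(1)$.

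The main obstacle in a careful write-up is to verify that Algorithm 3 (computing $\gamma'_0(T, i')$) faithfully realises all five sub-cases of Theorem 3.1(2); in particular, the branch triggered by $A_1 = A_2 = A_3 = \emptyset$ splits further according to whether there exists $j \in A_4$ with $\gamma'_1(T^j, e^j) - \gamma'_{\overline{0}}(T^j, e^j) = 1$ or every $j \in A_4$ satisfies $\gamma'_1(T^j, e^j) - \gamma'_{\overline{0}}(T^j, e^j) = 2$. Matching each IF--ELSE branch one-to-one with Theorem 3.1(2), and checking that no off-by-one slips into the additive constants $+1$ and $+2$, is the delicate step; once this is done the remaining bookkeeping is routine.
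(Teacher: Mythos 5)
Your proposal is correct and follows essentially the same route as the paper: the paper's proof consists only of the running-time observation (constant work per processed edge, each father edge visited once), with correctness taken as implicit from Theorem 3.1, which is exactly the induction you make explicit. If anything your write-up is more careful than the paper's, both in amortizing the per-child work as $O\bigl(\sum_{i'}(1+|N_c(i')|)\bigr)=O(n)$ and in noting that the value returned at the root should be $\min\{\gamma'_{1}(T,n),\gamma'_{0}(T,n)\}$, since only members of $\mathcal{F}_{1}(T,n)\cup\mathcal{F}_{0}(T,n)$ are TED-sets of $T$ (the paper's return line minimizes over all four parameters without justifying that this is harmless).
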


\begin{proof}
It is easy to know that the running times of Algorithms 1, 2, 3 and 4 are constant times. Then Algorithm 5, needing to visit each father edge $e$ of $T$ once, and all of the statements within which can be executed in a constant time, so with an adequate data structure the algorithm works in linear-time.
\end{proof}

\section{Characterizing $(\gamma'_{t}=2\gamma')$-trees and $(\gamma'_{t}=\gamma')$-trees}

In this section we provide a constructive characterization of trees satisfying $ \gamma'_{t}(T) = 2\gamma'(T)$ and $\gamma'_{t}(T) = \gamma'(T)$, denoted by $(\gamma'_{t}=2\gamma')$-trees and $(\gamma'_{t}=\gamma')$-trees, respectively.

First, we begin with some properties of specific graphs used in this section.

\begin{example}\label{Ex:StaDou}
Let $T$ be a star or a double star. Then $\gamma^{'}(T)=1$ and $\gamma^{'}_{t}(T)=2$.
\end{example}

\begin{example}\label{Ex:Paths}
  If $T$ is a path with five vertices, then $\gamma^{'}(T)=\gamma^{'}_{t}(T)=2$. If $T$ is a path with six vertices, then $\gamma^{'}(T)=2$ and $\gamma^{'}_{t}(T)=3$.
\end{example}

\begin{theorem}\label{Th:NoLeafEdge}
Let $G$ be a connected graph of diameter $\geqslant 4$. Then there exists a minimum edge dominating set (resp. a minimum total edge dominating set) $D$ of $G$ such that  $D$ contains no leaf edges of $G$.
\end{theorem}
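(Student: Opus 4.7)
The plan is a minimum-counterexample / swap argument. Fix a minimum edge dominating set (resp.\ minimum total edge dominating set) $D$ that, among all minimum such sets, contains the fewest leaf edges; I would show $D$ must contain none. First I would extract the following mild consequence of the diameter hypothesis: if every neighbour of a vertex $u$ were a leaf, then the connected component containing $u$ would be the star $S_{1,\deg(u)}$, of diameter at most $2$; since $\mathrm{diam}(G)\geqslant 4$, any vertex $u$ with a leaf neighbour must also have a non-leaf neighbour $w$, and then $\tilde e := uw$ is a non-leaf edge of $G$.

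Suppose for contradiction that some leaf edge $e = uv$ lies in $D$ with $v$ the leaf. Pick $w$ and $\tilde e$ as above and set $D' := (D\setminus\{e\})\cup\{\tilde e\}$. For the ED case, if some other $e'\in D$ were incident to $u$, then $e$ is already dominated by $e'$ and every edge incident to $u$ is adjacent to $e'$; since $v$ is a leaf, the only edges $e$ contributed to dominating are those at $u$, so $D\setminus\{e\}$ would still be an ED-set, contradicting the minimality of $|D|$. Hence no edge of $D$ other than $e$ is at $u$; in particular $\tilde e \notin D$, $|D'| = |D|$, and a direct check shows $D'$ is an ED-set (edges formerly dominated by $e$ are incident to $u$ and now dominated by $\tilde e$; all others retain a dominator in $D\setminus\{e\}$). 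But $D'$ has strictly fewer leaf edges than $D$ — a contradiction.

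For the TED case the same construction works, but one must additionally ensure the newcomer $\tilde e$ is itself dominated. Since $D$ is total-edge-dominating, $e$ is dominated by some $e^\ast \in D\setminus\{e\}$, and because $v$ is a leaf $e^\ast$ must be incident to $u$. If $\tilde e$ already lies in $D$, then the same calculation as above shows $D\setminus\{e\}$ is a TED-set, contradicting minimality; hence $\tilde e \notin D$ and $e^\ast \in D'$. Then $D'$ is a TED-set of the same cardinality: every edge adjacent to $e$ other than $e$ itself is incident to $u$ and hence adjacent to $\tilde e \in D'$; the new edge $\tilde e$ is adjacent to $e^\ast \in D'$; and every other edge keeps its original dominator in $D\setminus\{e\}$. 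Thus $D'$ has fewer leaf edges than $D$, the desired contradiction. I expect the TED case to be the main obstacle, precisely because of the extra need to check that $\tilde e$ is dominated; it is the pre-existing dominator $e^\ast$ of $e$ that rescues the argument.
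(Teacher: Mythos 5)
Your overall strategy (fix a minimum ED-set, resp.\ TED-set, with the fewest leaf edges and perform a swap) is exactly the paper's, and your ED case is complete --- in fact more careful than the paper's, since you justify the existence of the non-leaf edge $uw$ from the diameter hypothesis. The gap is in the TED case, in the branch $\tilde e\in D$: ``the same calculation as above'' does not show that $D\setminus\{e\}$ is a TED-set. In the ED calculation no edge of the set itself needed a dominator, but here $\tilde e$ must stay dominated, and its only possible dominators are edges of $D$ incident with $u$ or $w$. If $D\cap\bigl(E(u)\cup E(w)\bigr)=\{e,\tilde e\}$ --- i.e.\ $e^{\ast}=\tilde e$ and no other edge of $D$ touches $u$ or $w$, so that $e$ and $\tilde e$ dominate each other and nothing else of $D$ is nearby --- then deleting $e$ leaves $\tilde e$ with no neighbour in the set, $D\setminus\{e\}$ is not a TED-set, no contradiction with minimality arises, and your dichotomy ($\tilde e\in D$ gives a smaller set; $\tilde e\notin D$ gives the swap) collapses. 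Nothing you have established up to that point excludes this configuration, so the step needs a genuine argument, not an appeal to the earlier computation; the branch $\tilde e\notin D$, by contrast, is argued correctly.

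The gap is repairable inside your framework by choosing a different replacement edge in that subcase. If $w$ has a non-leaf neighbour $x\neq u$, then $D-e+wx$ is again a TED-set of the same size (edges at $u$, and $e$ itself, are covered by $\tilde e$, while $\tilde e$ and $wx$ dominate each other) with fewer leaf edges, a contradiction. If instead every neighbour of $w$ other than $u$ is a leaf, then $u$ must have a second non-leaf neighbour $w'$ --- otherwise $G$ would be a star or a double star, of diameter at most $3<4$ --- and $D-e+uw'$ works for the same reason. With this extra case analysis the proof closes. Note that the paper itself dismisses the total version with only the word ``similarly'', and this subcase is exactly where the details matter; you correctly sensed that dominating the newcomer is the delicate point, but the delicate point actually arises one case earlier, in dominating $\tilde e$ after the deletion.
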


\begin{proof}
Suppose to the contrary that each minimum edge dominating set contains some leaf edges and $D$ is a minimum edge dominating set containing least leaf edges. Then for each leaf edge $e\in D$, $N(e)\cap D=\emptyset$, otherwise, $D-e$ is a smaller edge dominating set, a contradiction. Choose one non-leaf edge $e'$ of $N(e)$, then $D'=D-e+e'$ is a new minimum edge dominating set containing less leaf edges than $D$, a contradiction.  Similarly, we can prove the total version.
\end{proof}

\begin{corollary}\label{Co:diam4}
Let $T$ be a  tree with diameter 4. Then $\gamma^{'}_{t}(T)=\gamma^{'}(T)$.
\end{corollary}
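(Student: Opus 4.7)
The plan is to apply Theorem \ref{Th:NoLeafEdge} to reduce both $\gamma'(T)$ and $\gamma'_t(T)$ to choices of central edges, and then observe that in a diameter-4 tree these two optima coincide. First I would describe the structure of $T$. Since $\mathrm{diam}(T)=4$ is even, $T$ has a unique central vertex $v$ with eccentricity $2$; every vertex of $T$ lies within distance $2$ of $v$. Let $u_1,\ldots,u_k$ be the neighbors of $v$, and let $S\subseteq\{1,\ldots,k\}$ collect those indices $j$ for which $u_j$ is a support vertex (i.e., has a neighbor other than $v$). Every edge of $T$ is then either a \emph{central edge} $vu_i$ or a \emph{leaf edge} $u_j\ell$ with $j\in S$, and because $\mathrm{diam}(T)=4$ there must be at least two support vertices among the $u_i$, so $|S|\geqslant 2$.

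Next I would invoke Theorem \ref{Th:NoLeafEdge} to fix a minimum edge dominating set $D$ of $T$ containing no leaf edges, so $D\subseteq\{vu_1,\ldots,vu_k\}$. For each $j\in S$ any leaf edge $u_j\ell$ is incident only with edges at $u_j$, and the only non-leaf edge at $u_j$ is $vu_j$; hence $vu_j\in D$ for every $j\in S$. This shows $\{vu_j:j\in S\}\subseteq D$ and in particular $\gamma'(T)\geqslant |S|$.

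Finally I would verify that $F:=\{vu_j:j\in S\}$ is itself a total edge dominating set. Any leaf edge $u_j\ell$ is adjacent to $vu_j\in F$; any central edge $vu_i$ shares the endpoint $v$ with every element of $F$, and such an element exists and is distinct because $|S|\geqslant 2$. In particular every edge of $F$ is adjacent to another edge of $F$ at $v$, so $F$ is a TED-set and $\gamma'_t(T)\leqslant |F|=|S|\leqslant \gamma'(T)$. Combined with the trivial inequality $\gamma'(T)\leqslant \gamma'_t(T)$ stated in the introduction, this yields $\gamma'_t(T)=\gamma'(T)$.

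No real obstacle arises; the only point requiring care is that $|S|\geqslant 2$ (a consequence of $\mathrm{diam}(T)=4$, not merely $\geqslant 3$) is precisely what lets the central edges totally dominate each other, so the same family $F$ realises both minima simultaneously.
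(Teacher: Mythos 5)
Your proof is correct and follows essentially the same route as the paper: both arguments invoke Theorem \ref{Th:NoLeafEdge} to restrict a minimum edge dominating set to the non-leaf (central) edges, which form a star at the center vertex, and then observe that this forced set of central edges is itself a total edge dominating set, giving $\gamma'_t(T)\leqslant\gamma'(T)$. Your version merely spells out the details (uniqueness of the center, $|S|\geqslant 2$, and why the central edges dominate each other) that the paper leaves implicit.
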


\begin{proof}
The induced subgraph of all non-leaf edges in $T$ is a star $S_{1,k}$. In order to dominate all leaf edges, by Theorem \ref{Th:NoLeafEdge}, $E(S_{1,k})$ is a minimum edge dominating set and also a TED-set of $T$, so $\gamma^{'}_{t}(T)\leqslant\gamma^{'}(T)$, combined with $\gamma^{'}(T)\leqslant\gamma^{'}_{t}(T)$, we get $\gamma^{'}_{t}(T)=\gamma^{'}(T)$.
\end{proof}

\begin{corollary}\label{diam=5}
Let $T$ be a  tree with diameter 5. Then $\gamma^{'}_{t}(T)=\gamma^{'}(T)$ or
 $\gamma^{'}_{t}(T)=\gamma^{'}(T)+1$.
\end{corollary}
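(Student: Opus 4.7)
The plan is to exploit the very constrained structure of a diameter-$5$ tree. First I would identify the unique central edge $uv$: since the diameter is odd, $T$ has a central edge, and every vertex lies at distance at most $2$ from $\{u,v\}$. Writing the non-leaf neighbours of $u$ as support vertices $u_1,\ldots,u_s$ (each with at least one leaf child) and similarly $v_1,\ldots,v_t$ for $v$, the requirement $diam(T)=5$ forces both $s\geqslant 1$ and $t\geqslant 1$.

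Second, I would observe that the non-leaf edges of $T$ form precisely the double star
\[
E^{\star}=\{uv\}\cup\{uu_i:1\leqslant i\leqslant s\}\cup\{vv_j:1\leqslant j\leqslant t\}.
\]
Invoking Theorem~\ref{Th:NoLeafEdge}, some minimum ED-set $D$ and some minimum TED-set $F$ are contained in $E^{\star}$. To dominate the leaf edges hanging from each $u_i$ (resp.\ $v_j$) without using those leaf edges themselves, one is forced to include $uu_i$ (resp.\ $vv_j$) in any such $D$ and $F$. Thus $D_0:=\{uu_1,\ldots,uu_s,vv_1,\ldots,vv_t\}$ lies inside both $D$ and $F$, and since $D_0$ already dominates every other edge of $T$ (every leaf edge at $u$ or $v$ is adjacent to some $uu_i$ or $vv_j$, and $uv$ itself is adjacent to, say, $uu_1$), we conclude $\gamma'(T)=s+t$.

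Finally, I would split the computation of $\gamma'_t(T)$ on whether $\min(s,t)\geqslant 2$. If $s,t\geqslant 2$, then $D_0$ is itself a TED-set, since each $uu_i$ is dominated by $uu_{i'}$ with $i'\neq i$ and symmetrically on the $v$-side; hence $\gamma'_t(T)=\gamma'(T)$. If instead $s=1$ (the case $t=1$ being symmetric), $uu_1$ has no neighbour in $D_0$, so any leaf-free TED-set must properly extend $D_0$, and the only non-leaf edge adjacent to $uu_1$ is $uv$; adjoining $uv$ yields a TED-set of size $s+t+1$, whence $\gamma'_t(T)=\gamma'(T)+1$. The main obstacle is just the initial structural claim forcing $D_0$ into every leaf-free ED- or TED-set; once that is in hand, the case analysis is immediate.
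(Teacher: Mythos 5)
Your proof is correct and follows essentially the same route as the paper: both use Theorem \ref{Th:NoLeafEdge} to take leaf-free minimum (total) edge dominating sets, observe that the non-leaf edges form a double star whose pendant edges are forced into any such set, and complete a TED-set by adjoining the central edge, yielding $\gamma'(T)\leqslant\gamma'_t(T)\leqslant\gamma'(T)+1$. Your case split on $\min(s,t)$ even sharpens the corollary by identifying exactly when each of the two values occurs, but the underlying argument is the paper's.
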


\begin{proof}
From the condition, the induced subgraph of all non-leaf edges in $T$ is exactly a double star, say $H$ and two adjacent center vertices $r, t$. Let $D$ be a minimum edge dominating set of $T$ containing no leaf edges by theorem \ref{Th:NoLeafEdge} and, $e$ an leaf edge in $H$, say $e=vr$ or $vt$. Since, in $T$, $v$ is incident with at least one leaf edge, $D$ contains $e$. Thus $(E(H)-rt)\subseteq D$. Combined that $D+rt$ induces an connected subgraph, exactly $H$, further $D+rt$ is a total edge dominating set of $T$, so $\gamma^{'}_{t}(T)=\gamma^{'}(T)$ or $\gamma^{'}_{t}(T)=\gamma^{'}(T)+1$.
\end{proof}

\subsection{$(\gamma'_{t}=2\gamma')$-trees}

In this subsection we provide a constructive characterization of trees $T$ satisfying $\gamma'_{t}(T)=2\gamma'(T)$. Note that a star or double star satisfies the condition above. In what follows we consider the trees satisfying the condition other than stars.

Our aim is to describe an inductive procedure of the tree $T$ with $\gamma'_{t}(T)=2\gamma'(T)$ by labelling.  For the initiated step, for any vertex $v$ of $P_4$, we give a label $C$ or $L$ to $v$, denoted by $l(v)$, defined as $l(v)=L$ if $v$ is a leaf of $P_4$, $l(v)=C$, otherwise. For convenience, we call an edge with both endpoints labelled $C$ as $C-C$ edge.

Let $\mathcal{T}$ be the family of labelled trees $T$ containing the labelled $P_{4}$ as the initiated labelled tree, constructed inductively by the two operations $\mathcal{O}_{1}$, $\mathcal{O}_{2}$ listed below (i.e., constructing a bigger labelled tree $T'$ from a smaller labelled tree $T$ in $\mathcal{T}$).\\

\noindent \textbf{Operation }$\mathcal{O}_{1}$:
Let $T \in \mathcal{T}$ and $v$ a vertex of $T$ with $l(v) = L$ such that: (1). each vertex labelled $C$ of distance 2 from $v$ is adjacent to a leaf vertex; (2). For any $C-C$ edge $wu$ of distance 1 from $v$, say $v$ is adjacent to $u$, either $u$ has a leaf other than $v$ or $N(w)-u$ are all leaves. Construct a bigger tree ${T'}$ in $\mathcal{T}$ from $T$ and a labelled $P_{4}$ by identifying $v$ and a leaf vertex of $P_{4}$, labelling the identified vertex as $L$ and keeping the labels of the other vertices unchanged, see Fig. \ref {Fig:1=2a}. \\

\noindent \textbf{Operation }$\mathcal{O}_{2}$:
 Let $T \in \mathcal{T}$ and $v$ a vertex of $T$ with $l(v) = C$. Construct a bigger tree ${T'}$ in $\mathcal{T}$ from $T$ by adding a new vertex $u$ adjacent to $v$, labelling $u$ as $L$, keeping the labels of the other vertices unchanged, see Fig. \ref{Fig:1=2b}.

\begin{figure*}[htbp]

\centering
\subfigure[\scriptsize{Operation $\mathcal{O}_{1}$;}]{
    \begin{minipage}[t]{0.4\linewidth}
        \centering
          \label{Fig:1=2a}
        \includegraphics[scale=0.6]{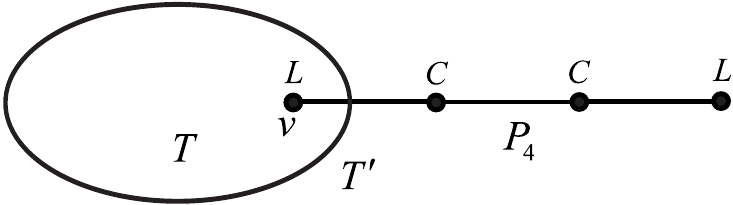}
        \vspace{0.46cm}

    \end{minipage}
}
\subfigure[\scriptsize{Operation $\mathcal{O}_{2}$.}]{
    \begin{minipage}[t]{0.4\linewidth}
        \centering
        \label{Fig:1=2b}
        \includegraphics[scale=0.6]{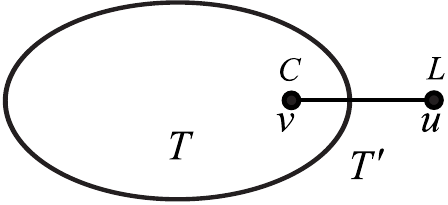}
        \vspace{0.46cm}
    \end{minipage}
}
\centering
\caption{Two operations.}
\vspace{-0.3cm}
\label{fig:1=2 operations}

\end{figure*}


From the two operations above, we can get the following simple observations.

\begin{observation}\label{obs:1}
Let $T\in\mathcal{T}$. Then
\begin{enumerate}
\item Each leaf vertex is labelled $L$ and each support vertex is labelled $C$.
\item Exactly one neighbor of each vertex labelled $C$ is labelled $C$, and the remaining neighbours are labelled $L$.
\item No two vertices labelled L are adjacent.
\item  If one endpoints of a $C-C$ edge has a non-leaf neighbor labelled $L$, then the other endpoint has one leaf neighbor.
\end{enumerate}
\end{observation}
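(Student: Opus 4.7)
The plan is to prove all four items by simultaneous induction on the number of applications of $\mathcal{O}_{1}$ and $\mathcal{O}_{2}$ used to build $T \in \mathcal{T}$ from the initial labelled $P_{4}$. For the base case $T = P_{4}$ with labels $(L, C, C, L)$, every statement is read off by inspection: the two endpoints are leaves labelled $L$ and the two interior vertices are supports labelled $C$; the unique $C-C$ edge has both endpoints whose only $L$-neighbor is a leaf, so (4) is vacuous; each $C$-vertex has exactly one $C$-neighbor; and the two $L$-vertices lie at distance $3$.

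For the inductive step using operation $\mathcal{O}_{2}$ (attach a pendant $L$-vertex $u$ at a $C$-vertex $v$): $u$ is a new leaf labelled $L$, the $C$-neighbor relationships of every vertex are unchanged since $u$ is labelled $L$, no $L$-$L$ edge is created because $u$'s only neighbor $v$ is $C$, and any $C-C$ edge that could be affected is incident to $v$, which merely gains an extra leaf $u$ -- this can only reinforce the ``leaf neighbor'' conclusion in (4). Hence (1)--(4) are inherited.

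For operation $\mathcal{O}_{1}$ (identify $v$ with the $L$-endpoint of a new labelled $P_{4}$, producing vertices $w_{1}, w_{2}, w_{3}$ with labels $C, C, L$, where $w_{1}$ is adjacent to $v$): the only new leaf is $w_{3}$ (label $L$), the only new support vertex is $w_{2}$ (label $C$), and $v$ may lose leaf status but keeps label $L$; no $L$-$L$ edge appears because $w_{3}$ is adjacent only to $w_{2}$ ($C$) and $v$'s new neighbor $w_{1}$ is $C$; and $w_{1}, w_{2}$ each have exactly one $C$-neighbor, while every older $C$-vertex retains its unique $C$-neighbor. Thus (1)--(3) are immediate, and for the new $C-C$ edge $w_{1}w_{2}$ property (4) holds because $w_{2}$ is adjacent to the leaf $w_{3}$ and $w_{1}$'s only $L$-neighbor is $v$.

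The main work is preserving (4) for an old $C-C$ edge $uw$ with $u$ adjacent to $v$: the sole relevant change is that $v$ may have been a leaf in $T$ and has become a non-leaf $L$-neighbor of $u$ in $T'$. The $u$-side implication of (4), which now demands that $w$ have a leaf neighbor in $T'$, is exactly delivered by clause (1) in the definition of $\mathcal{O}_{1}$, forcing every $C$-vertex at distance $2$ from $v$ (in particular $w$) to already have a leaf neighbor. The $w$-side implication, demanding a leaf neighbor of $u$ in $T'$ whenever $w$ has a non-leaf $L$-neighbor, is secured by clause (2): the option ``$u$ has a leaf other than $v$'' keeps a leaf neighbor of $u$ alive in $T'$, while the option ``$N(w)-u$ are all leaves'' kills the premise by forcing $w$ to have no non-leaf $L$-neighbor. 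This case analysis around $v$ is the only genuine obstacle; all other verifications are routine.
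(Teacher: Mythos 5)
Your proposal is correct. The paper states this observation without any explicit proof, treating it as immediate from the definitions of $\mathcal{O}_{1}$ and $\mathcal{O}_{2}$; your induction on the construction sequence, and in particular your use of clauses (1) and (2) in the definition of $\mathcal{O}_{1}$ to preserve item (4) when the identified vertex $v$ ceases to be a leaf, is precisely the argument the paper leaves implicit.
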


\begin{lemma}\label{Le:CCEDS}
Let $T\in \mathcal{T}$ and $U$ the set of edges whose endpoints are labelled $C$ in $T$. Then $U$ is a $\gamma'(T)$-set.
\end{lemma}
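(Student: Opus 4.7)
The plan is to prove both inequalities $\gamma'(T) \leq |U|$ and $\gamma'(T) \geq |U|$, with the second direction handled by induction on the number of operations used to build $T$ from the labelled $P_4$. For the upper bound, by Observation 1 every edge of $T$ is either a $C$-$C$ edge (hence in $U$) or a $C$-$L$ edge: item (3) rules out $L$-$L$ edges. For a $C$-$L$ edge $cv$ with $l(c) = C$, item (2) of Observation 1 provides a unique $C$-neighbor $c'$ of $c$, and the edge $cc' \in U$ is adjacent to $cv$. Hence $U$ is an edge dominating set and $\gamma'(T) \leq |U|$.

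For the reverse inequality I induct on the construction. In the base case $T = P_4$ we have $|U| = 1 = \gamma'(P_4)$. For the inductive step, suppose $T'$ is built from $T \in \mathcal{T}$ by one operation, with $\gamma'(T) = |U_T|$ by hypothesis, and let $F'$ be a minimum EDS of $T'$. If $T'$ comes from $\mathcal{O}_2$ (pendant $u$ added at the $C$-vertex $v$) then $U' = U_T$, and it suffices to exhibit an EDS of $T$ of size $\leq |F'|$: either $uv \notin F'$, in which case $F'$ already lies in $E(T)$ and dominates $T$, or $uv \in F'$, in which case replacing $uv$ by $vc'$ (where $c'$ is the unique $C$-neighbor of $v$ in $T$) preserves the domination of every $T$-edge, since $vc'$ covers every edge incident to $v$.

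If $T'$ comes from $\mathcal{O}_1$ (attaching $P_4 = v u_2 u_3 u_4$ at the $L$-vertex $v$), then $U' = U_T \cup \{u_2 u_3\}$ and I need $\gamma'(T') \geq \gamma'(T) + 1$. Let $F_T = F' \cap E(T)$ and $F_N = F' \setminus E(T) \subseteq \{v u_2, u_2 u_3, u_3 u_4\}$. Since the pendant edge $u_3 u_4$ must be dominated, $F_N \cap \{u_2 u_3, u_3 u_4\}$ is nonempty. If $v u_2 \notin F'$, then $u_2 u_3$ and $u_3 u_4$ share no vertex with $V(T)$, so no new edge can dominate any $T$-edge, $F_T$ is an EDS of $T$, and $|F'| = |F_T| + |F_N| \geq \gamma'(T) + 1$. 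If $v u_2 \in F'$, then $F_T$ can fail only at edges incident to $v$, but adjoining any $T$-edge incident to $v$ (which exists since $\deg_T(v) \geq 1$) yields an EDS of $T$, giving $|F_T| \geq \gamma'(T) - 1$; moreover $F_N \ni v u_2$ together with some edge dominating $u_3 u_4$ forces $|F_N| \geq 2$, so again $|F'| \geq \gamma'(T) + 1$.

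The main obstacle is this last sub-case $v u_2 \in F'$ under $\mathcal{O}_1$: the natural restriction $F_T$ fails to be an EDS of $T$, so the argument must simultaneously account for the possible domination shortfall at $v$ and extract an extra edge of $F_N$ from the obligation to dominate the newly created pendant $u_3 u_4$, producing a two-edge surplus on the new $P_4$ that exactly compensates the one-edge deficit on $T$.
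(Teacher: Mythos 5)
Your proof is correct, but it takes a genuinely different route from the paper's. The upper bound (that $U$ is an edge dominating set) is the same in both arguments, via Observation \ref{obs:1} (2) and (3). For the lower bound, the paper argues directly on a fixed tree $T\in\mathcal{T}$: using Observation \ref{obs:1} (2)--(4) together with Theorem \ref{Th:NoLeafEdge} it notes that the $C$-$C$ edges form an induced matching-like structure so that no edge of a (leaf-edge-free) minimum edge dominating set can account for two distinct $C$-$C$ edges, giving $\gamma'(T)\geqslant |U|$ without any induction. You instead induct on the construction sequence of $T$ from the labelled $P_4$, proving the incremental statements that $\gamma'$ is unchanged under $\mathcal{O}_2$ (by rerouting a pendant edge of a minimum EDS onto the $C$-$C$ edge at $v$) and increases by exactly one under $\mathcal{O}_1$ (by splitting a minimum EDS of $T'$ into its restriction to $T$ and the at most three new edges, and balancing the one-edge deficit at $v$ against the two-edge surplus forced by the new pendant edge $u_3u_4$); I checked the two subcases of the $\mathcal{O}_1$ step and they are sound, and notably they do not even need the extra hypotheses (1), (2) in the definition of $\mathcal{O}_1$. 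What each approach buys: the paper's argument is shorter but leans on Theorem \ref{Th:NoLeafEdge} and a rather terse charging step; yours is longer but self-contained, and it delivers as a byproduct the exact increments $\gamma'(T')=\gamma'(T)$ under $\mathcal{O}_2$ and $\gamma'(T')=\gamma'(T)+1$ under $\mathcal{O}_1$, facts the paper itself invokes later (Case 1 in the proof of Lemma \ref{Le:TIs1=2}) without separate justification.
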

\begin{proof}
By Observation \ref{obs:1} (2) and (3), we know that $U$ is an edge dominating set of $T$ and further each component of the induced subgraph $T[U]$ is $K_2$. By Observation \ref{obs:1} (4) and Theorem \ref{Th:NoLeafEdge}, the size of any edge dominating set is at least $|D|$. Thus,
 $U$ is a $\gamma'(T)$-set of $T$.
\end{proof}

\begin{lemma}\label{Le:TIs1=2}
Let $T\in\mathcal{T}$. Then $T$ is a $(\gamma'_{t}=2\gamma')$-tree.
\end{lemma}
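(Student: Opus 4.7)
The plan is to combine the structural information from Observation~\ref{obs:1} with Lemma~\ref{Le:CCEDS} and the general inequality $\gamma'_t(G)\leq 2\gamma'(G)$ recorded in the introduction. By Lemma~\ref{Le:CCEDS} we already have $\gamma'(T)=|U|$, so the upper bound $\gamma'_t(T)\leq 2|U|$ is immediate; hence the whole task reduces to proving the matching lower bound $\gamma'_t(T)\geq 2|U|$.

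My first reduction exploits two facts from Observation~\ref{obs:1}: the $C$-vertex pairs $\{w,u\}$ coming from distinct $C$-$C$ edges $wu\in U$ are pairwise disjoint (part~(2)), and no edge has two $L$-endpoints (part~(3)). Consequently every edge of $T$ has at least one $C$-endpoint, and that $C$-endpoint lies in exactly one pair $\{w,u\}$ with $wu\in U$; so every edge of $T$ is incident to exactly one such pair. Therefore, for any TED-set $F$ of $T$, setting $F_{wu}:=\{e\in F:\ e\text{ meets }\{w,u\}\}$, we get the disjoint decomposition $|F|=\sum_{wu\in U}|F_{wu}|$, and it suffices to prove the local bound $|F_{wu}|\geq 2$ for every $wu\in U$.

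The main step is this local bound, which I would prove by a short case analysis using Observation~\ref{obs:1}(1) and~(4). I would first argue that at least one endpoint of $wu$, say $u$, must be a support vertex: if $w$ is not, then $w$ has some non-leaf $L$-neighbour (otherwise $w$'s only neighbour would be $u$, making $w$ a leaf and contradicting $l(w)=C$), and Observation~\ref{obs:1}(4) then forces $u$ to have a leaf neighbour $y$. Dominating the leaf edge $uy$ forces some $e_u\in F$ incident to $u$, giving $|F_{wu}|\geq 1$. If $w$ is also a support vertex, the symmetric argument supplies an edge $e_w\in F$ at $w$, and either $e_w\neq e_u$ (done) or $e_u=e_w=wu$, in which case the TED condition on $wu$ forces a further $F$-edge adjacent to $wu$, and any such edge sits at $w$ or $u$ and so in $F_{wu}$. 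If $w$ is not a support vertex, Observation~\ref{obs:1}(4) applied with the roles of $w$ and $u$ interchanged forces every $L$-neighbour of $u$ to be a leaf, so every edge at $u$ is either $wu$ or a leaf edge; in both sub-cases the TED-partner of $e_u$ must again lie in $F_{wu}$, giving $|F_{wu}|\geq 2$.

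Summing the inequality $|F_{wu}|\geq 2$ over all $wu\in U$ yields $|F|\geq 2|U|$, and hence $\gamma'_t(T)=2|U|=2\gamma'(T)$. The only delicate point I foresee is the sub-case where $w$ fails to be a support vertex: one must apply Observation~\ref{obs:1}(4) symmetrically to rule out $u$ having any non-leaf $L$-neighbour, which would otherwise allow $e_u$ to be an edge $uy'$ with $y'$ a non-leaf $L$-vertex whose TED-partner at $y'$ escapes $F_{wu}$.
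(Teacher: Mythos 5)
Your proof is correct, but it takes a genuinely different route from the paper. The paper proves Lemma \ref{Le:TIs1=2} by induction on the edge size, following the recursive construction of $\mathcal{T}$: when $T$ arises from $\overline{T}$ by Operation $\mathcal{O}_1$ it shows $\gamma'(T)=\gamma'(\overline{T})+1$ and $\gamma'_t(T)=\gamma'_t(\overline{T})+2$ via a leaf-edge-free $\gamma'_t(T)$-set (Theorem \ref{Th:NoLeafEdge}) and an exchange argument that invokes the technical conditions (1)--(2) in the definition of $\mathcal{O}_1$; the $\mathcal{O}_2$ case is a short inequality chain using Lemma \ref{Le:CCEDS}. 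You instead argue directly, with no induction: the upper bound comes from $\gamma'_t\leqslant 2\gamma'$ together with $\gamma'(T)=|U|$ (Lemma \ref{Le:CCEDS}), and for the lower bound you observe that, by Observation \ref{obs:1}(2)--(3), the $C$--$C$ edges form a matching covering all $C$-vertices and every edge of $T$ meets exactly one matched pair $\{w,u\}$, so any TED-set $F$ splits as $\bigcup_{wu\in U}F_{wu}$; the local bound $|F_{wu}|\geqslant 2$ is then established by your case analysis, where the only nontrivial case ($w$ not a support vertex) is correctly closed by applying Observation \ref{obs:1}(4) in both directions to force $u$ to be a support vertex all of whose $L$-neighbours are leaves, so that the TED-partner of the forced edge at $u$ cannot escape $F_{wu}$. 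Both arguments are sound; yours is shorter and localizes the use of the labelling, leaning only on the stated properties in Observation \ref{obs:1} (in particular item (4), which is where the conditions of $\mathcal{O}_1$ are implicitly encoded), whereas the paper's inductive proof mirrors the structure later used for the converse (Lemma \ref{Le:1=2IsT}) and works directly with the operations.
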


\begin{proof}
We proceed by induction on the size $m$ of the edge set of a tree $T\in \mathcal{T}$. For the initial step, it is obvious that $\gamma'_{t}(P_4)=2\gamma'(P_4)$. For the inductive hypothesis, we assume that, for every $\overline{T}\in \mathcal{T}$ of edge size less than $m$, $\gamma'_{t}( \overline{T})=2\gamma'(\overline{T})$. Let $T\in \mathcal{T}$ with edge size $m$, and suppose $T$ is obtained from a tree $\overline{T}\in \mathcal{T}$ by one of two operations. We need to prove that $\gamma'_{t}( T)=2\gamma'(T)$. Next, we divide two cases to analyze according to which operation is used to construct the tree $T$ from $\overline{T}$.

\textbf{Case 1.} $T$ is obtained from $\overline{T}$ and a labelled $P_4=u_{1}u_{2}u_3u_4$ by Operation 1, i.e., identifying $u_1$ and $v (\in V(\overline{T}))$, denoted by $v$ the identifying vertex in $T$.

By Lemma \ref{obs:1}, we have $\gamma'(T)=\gamma'(\overline{T})+1$. Next, we just need to show $\gamma'_{t}(T)=\gamma'_{t}(\overline{T})+2$.

On the one hand, the union of a $\gamma'_{t}(\overline{T})$-set of $\overline{T}$ and $\{vu_{2}, u_{2}u_{3}\}$ is a TED-set of $T$, further $\gamma'_{t}(T)\leqslant\gamma'_{t}(\overline{T})+2$. On the other hand, it is sufficient to show that $\gamma'_t(\overline{T})+2\leq \gamma'_t(T)$. Without loss of generality, let $N_{\overline{T}}(v)=\{v_{1},\ldots, v_{r}\}$ for some positive integer $r$. For $1\leqslant i\leqslant r$, from the definition of Operation 1 and Observation \ref{obs:1} (3), $l_{\overline{T}}(v)=L$ and $l_{\overline{T}}(v_{i})=C$; by  Observation \ref{obs:1} (2), we denote by $w_{i}$ $(1\leqslant i\leqslant r)$ the unique vertex labelled $C$ adjacent to $v_i$ in $\overline{T}$; and by the choice of $v$ in the definition of Operation 1, $w_{i}$ has one leaf neighbor in $\overline{T}$.

By Theorem \ref{Th:NoLeafEdge}, we let $F_t$ be such a $\gamma'_{t}(T)$-set that $F_t$ contains no leaf edges. If the restriction $F_t|_{\overline{T}}$
 of $F_t$ on $\overline{T}$ is a TED-set of $\overline{T}$, then $\gamma'_t(\overline{T})+2\leq \gamma'_t(T)$. In what follows we assume that $F_t|_{\overline{T}}$ is not a TED-set of $\overline{T}$, then $|E_{\overline{T}}(v)\cap F_t|\leqslant 1$.

If $E_{\overline{T}}(v)\cap F_t=\emptyset$, then $F_t|_{\overline{T}}$ does not dominate some edge incident with $v$ in $\overline{T}$, say $vv_i$ for some integer $i$, further there is no leaf edge $e$ incident with $v_i$ in $T$, otherwise $F_t$ does not dominate $e$ in $T$. By the choice of $v$ in Operation 1,  all neighbors of $w_i$ other than $v_i$ are all leaves, a contradiction with the choice of $F_t$. If $E_{\overline{T}}(v)\cap F_t$ has a unique edge, say $vv_{i}$ for some $i$, then $w_iv_i\notin F_t$. Since $w_i$ has a leaf vertex by the choice of $v$ in Operation 1, there is one edge in $F_t$ incident with $w_i$. Therefore the restriction of $F_t-vv_i+v_iw_i$ on $\overline{T}$ is a TED-set of $\overline{T}$, further $\gamma'_t(\overline{T})+2\leqslant \gamma'_t(T)$.

\noindent\textbf{Case 2.} $T$ is obtained from $\overline{T}$ by adding a new vertex $u$ adjacent to $v$ labelled $C$ (i.e., Operation 2).

By Lemma \ref{Le:CCEDS}, we can easily get $\gamma'(T)=\gamma'(\overline{T})$. Then $\gamma'_{t}(T)\leqslant 2\gamma'(T)= 2\gamma'(\overline{T})=\gamma'_{t}(\overline{T})\leqslant\gamma'_{t}(T)$, and so $\gamma'_{t}(T)=2\gamma'(T)$.

Combined the two cases above, we have $\gamma'_{t}(T)=2\gamma'(T)$ for $T\in \mathcal{T}$.
\end{proof}

\begin{lemma}\label{Le:1=2CloNeiIntEmp}
Let $T$ be a tree with $\gamma'_{t}(T)=2\gamma'(T)$, $F$ a $\gamma'(T)$-set. Then $N[e]\cap N[e']=\emptyset$ for any distinct edges $e, e'\in F$.
\end{lemma}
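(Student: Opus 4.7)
The plan is a proof by contradiction based on the standard construction that promotes a $\gamma'(T)$-set to a TED-set at minimal cost. Given $F$, call an edge $f\in F$ \emph{isolated} if $N(f)\cap F=\emptyset$, and write $I$ for the set of isolated edges of $F$. For each $f\in I$ pick some $g_{f}\in N(f)$ (which exists because $\gamma'_{t}(T)$ is finite, so $T$ has no $K_{2}$-component), and set $F^{*}=F\cup\{g_{f}:f\in I\}$. Then $F^{*}$ is a TED-set: every edge outside $F$ is already dominated by $F$; every non-isolated $f\in F$ is dominated by its neighbor in $F$; every isolated $f\in I$ is dominated by its chosen $g_{f}$; and each added $g_{f}$ is dominated by $f\in F$. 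In particular $\gamma'_{t}(T)\leqslant |F|+|I|$.

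The next step uses this inequality together with the hypothesis $\gamma'_{t}(T)=2\gamma'(T)=2|F|$ to conclude $|I|\geqslant|F|$, hence $I=F$: every edge of $F$ must be isolated inside $F$. Consequently no two edges of $F$ are adjacent, which already gives $e\notin N[e']$ and $e'\notin N[e]$ for any distinct $e,e'\in F$.

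To finish, I would assume for contradiction that the only remaining way $N[e]\cap N[e']$ could be non-empty occurs, namely that there exists some $e''\in N(e)\cap N(e')$ with $e''\neq e,e'$. Observe $e''\notin F$, for otherwise $e''$ would be a neighbor of $e$ lying in $F$, contradicting $e\in I$. Now refine the greedy construction: choose $g_{e}=g_{e'}=e''$, and choose $g_{f}\in N(f)$ arbitrarily for the other $f\in I$. The resulting set is still a TED-set by exactly the same verification as before, and has size at most $|F|+|I|-1=2|F|-1<\gamma'_{t}(T)$, a contradiction.

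I do not expect any real obstacle here: the argument is a counting trick around the extension $F\mapsto F^{*}$, with a one-edge saving whenever two isolated edges share a common neighbor. The only care needed is the bookkeeping step verifying that $F^{*}$ is genuinely a TED-set (including the added $g_{f}$'s being dominated) and that $e''\notin F$ so that the saving is real rather than illusory.
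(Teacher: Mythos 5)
Your proposal is correct and rests on the same idea as the paper's proof: extend the $\gamma'(T)$-set to a TED-set by adding one neighbor per edge, and observe that a shared element of $N[e]\cap N[e']$ lets you save one edge, yielding a TED-set of size at most $2\gamma'(T)-1$, a contradiction. The only difference is organizational — you first derive $I=F$ (no two edges of $F$ adjacent) before treating the common-neighbor case, whereas the paper handles both situations in a single construction from $F+e''$ — so this is essentially the paper's argument.
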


\begin{proof}
By contradiction. Assume that there exist two edges $e, e'$ in $F$ such that $N[e]\cap N[e']\neq\emptyset$, say $e''\in N[e]\cap N[e']$. Now we construct a TED-set $S$ of $T$ from $F+ e''$: for any edge $f\in F-e- e'$, adding an edge adjacent to $f$ to $F+ e''$. Then $|S|\leqslant 2|F|-1=2\gamma'(T)-1$, a contradiction.
\end{proof}

\begin{corollary}\label{cor 1=2}
Let $T$ be a tree with $\gamma'_{t}(T)=2\gamma'(T)$,  $vu$ and $uw$ two adjacent edges in $T$. Then $v, w$ and $u$ can't all be support vertices.
\end{corollary}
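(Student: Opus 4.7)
The plan is to argue by contradiction, using Lemma \ref{Le:1=2CloNeiIntEmp} as the main lever. Suppose $v$, $u$, and $w$ are all support vertices, with leaf neighbors $v'$, $u'$, $w'$ respectively. Fix a $\gamma'(T)$-set $F$. The first step is the observation that, since the pendant edges $vv'$, $uu'$, $ww'$ must each be dominated by $F$ and their only neighborhoods are the edges incident with $v$, $u$, $w$ respectively, $F$ must contain an edge $e_v$ incident with $v$, an edge $e_u$ incident with $u$, and an edge $e_w$ incident with $w$ (each such $e_*$ is either the pendant edge itself or another edge at the support vertex).

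Next I would record the key incidence fact: because $e_v$ is incident with $v$ and $vu$ is incident with $v$, either $e_v = vu$ or $e_v$ is adjacent to $vu$; in either case $vu \in N[e_v]$. Symmetrically $vu \in N[e_u]$, and $uw \in N[e_u] \cap N[e_w]$.

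The main step is to rule out all possible coincidences among $e_v, e_u, e_w$. An edge incident with both $v$ and $u$ must equal $vu$; so $e_v = e_u$ forces $e_v = e_u = vu$. Likewise $e_u = e_w$ forces $e_u = e_w = uw$. Since $vu \neq uw$, we cannot simultaneously have $e_v = e_u$ and $e_u = e_w$; therefore at least one of $e_v \neq e_u$ or $e_u \neq e_w$ holds. In the first case, $vu \in N[e_v] \cap N[e_u]$; in the second, $uw \in N[e_u] \cap N[e_w]$. Either way, two distinct edges of $F$ have intersecting closed neighborhoods, contradicting Lemma \ref{Le:1=2CloNeiIntEmp}. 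Hence $v, u, w$ cannot all be support vertices.

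I do not expect any serious obstacle: the argument is essentially a short case-check that rides on Lemma \ref{Le:1=2CloNeiIntEmp}. The only subtlety is being careful that the ``dominating'' edges $e_v, e_u, e_w$ are genuinely incident with the support vertices (not just adjacent to the pendant edges through the leaves, which is impossible since leaves have degree one), and then keeping track of the degenerate case in which two of the $e_*$ could coincide, which is exactly what the final paragraph handles.
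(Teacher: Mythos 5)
Your proof is correct and follows the same route as the paper, which simply states that the corollary follows directly from Lemma \ref{Le:1=2CloNeiIntEmp}; your write-up just supplies the routine details (each pendant edge forces an $F$-edge at its support vertex, and two of these must be distinct yet have $vu$ or $uw$ in both closed neighborhoods). No gaps.
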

\begin{proof}
 This follows directly from Lemma \ref{Le:1=2CloNeiIntEmp}.
\end{proof}

\begin{lemma}\label{Le:1=2IsT}
Let $T$ be a non-star tree with $\gamma'_{t}(T)=2\gamma'(T)$. Then $T\in\mathcal {T}$.
\end{lemma}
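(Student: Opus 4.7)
I will prove this lemma by strong induction on $n=|V(T)|$, stripping off a pendant $P_4$-branch at one end of a longest path.

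\textbf{Diameter reduction.} If $diam(T)\le 2$, then $T$ is a star, excluded. If $diam(T)=3$, then $T$ is a double star; beginning from $P_4\in\mathcal{T}$ and iterating $\mathcal{O}_2$ at its two $C$-labelled centres produces every double star, so $T\in\mathcal{T}$. If $diam(T)\in\{4,5\}$, Corollaries \ref{Co:diam4} and \ref{diam=5} give $\gamma'_{t}(T)\le\gamma'(T)+1$; combined with $\gamma'_{t}(T)=2\gamma'(T)$ this forces $\gamma'(T)\le 1$ and hence $diam(T)\le 3$, a contradiction. Hence $diam(T)\ge 6$.

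\textbf{Structure near the leaf end.} Fix a longest path $v_0v_1\cdots v_d$ with $d\ge 6$ and, by Theorem \ref{Th:NoLeafEdge}, a $\gamma'(T)$-set $F$ without leaf edges. Since $v_0v_1$ must be dominated by an edge of $F$ incident to $v_1$ and the only non-leaf edge at $v_1$ is $v_1v_2$ (longest-path maximality), we have $v_1v_2\in F$. Lemma \ref{Le:1=2CloNeiIntEmp} then forces every other edge of $F$ to have closed edge-neighbourhood disjoint from $N[v_1v_2]$. A case analysis on the non-path neighbours of $v_1,v_2,v_3$ (which can only be leaves or support vertices by longest-path maximality) yields: (i) the only support-vertex neighbour of $v_2$ is $v_1$, since any other support-vertex neighbour $b$ of $v_2$ would carry an undominated leaf edge; (ii) $v_3$ has degree exactly $2$, by the same style of argument; (iii) there is exactly one edge of $F$ incident to $v_4$, say $v_4u\in F$ with $u\ne v_3$, and it dominates $v_3v_4$.

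\textbf{Inductive reduction.} Let $T_3$ be the component of $T-v_3v_4$ containing $v_3$ and set $T':=T-(V(T_3)\setminus\{v_3\})$, so that $v_3$ becomes a leaf of $T'$. Using (i)--(iii), $F\setminus\{v_1v_2\}$ is a $\gamma'(T')$-set (the edges removed along with $V(T_3)\setminus\{v_3\}$ no longer need domination, and $v_3v_4\in E(T')\setminus E(T')$ is dominated by $v_4u$ in $T'$), so $\gamma'(T')=\gamma'(T)-1$. An analogous accounting on TED-sets, together with the observation that any minimum TED-set of $T$ must contain both $v_1v_2$ and an adjacent non-leaf edge (necessarily $v_2v_3$), yields $\gamma'_{t}(T')=\gamma'_{t}(T)-2=2\gamma'(T')$. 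Since $T'$ still contains the subpath $v_3v_4\cdots v_d$ of length $d-3\ge 3$, it is non-star, and the induction hypothesis gives $T'\in\mathcal{T}$ with its canonical $\{L,C\}$-labelling. As $v_3$ is a leaf of $T'$, Observation \ref{obs:1}(1) gives $l_{T'}(v_3)=L$ and then Observation \ref{obs:1}(3) forces $l_{T'}(v_4)=C$.

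\textbf{Reconstruction via $\mathcal{O}_1$ and $\mathcal{O}_2$.} I claim $T$ arises from $T'$ by a single application of $\mathcal{O}_1$ at $v=v_3$ (attaching the $P_4$ $v_3v_2v_1v_0$) followed by the appropriate number of applications of $\mathcal{O}_2$ at $v_2$ and at $v_1$ (both labelled $C$ after $\mathcal{O}_1$) to reinstate the additional leaves they carry in $T$. The main obstacle is verifying the two structural hypotheses of $\mathcal{O}_1$ at $v_3$ inside $T'$: that every $C$-vertex at distance $2$ from $v_3$ has a leaf neighbour, and that every $C$-$C$ edge at distance $1$ from $v_3$ satisfies the required alternative. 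To handle this, I would choose $F$ in the form $F^*\cup\{v_1v_2\}$, where $F^*$ is the $C$-$C$ edge set of $T'$, a $\gamma'(T')$-set by Lemma \ref{Le:CCEDS}. With this choice, the edge $v_4u$ in (iii) is forced to be the unique $C$-$C$ edge $v_4w$ at $v_4$ in $T'$. The structural description of $T$ around $v_4$ and $w$, combined with Observation \ref{obs:1}(4) applied to $T'$, then translates into precisely the two conditions required by $\mathcal{O}_1$. Once these are verified, the operations $\mathcal{O}_1$ and $\mathcal{O}_2$ rebuild $T$ from $T'$, giving $T\in\mathcal{T}$ and completing the induction.
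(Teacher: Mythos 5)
Your reduction step contains a false structural claim, and this breaks both the accounting and the reconstruction. Claim (ii), that $v_3$ has degree exactly $2$, does not follow from the style of argument in (i) and is in fact false: take the spider obtained from a center $c$ by attaching three paths of length $3$. One checks $\gamma'(T)=3$ and $\gamma'_t(T)=6$, so it is a $(\gamma'_t=2\gamma')$-tree of diameter $6$, yet the vertex $v_3=c$ on any longest path has degree $3$. What is true (and what the paper proves as its Claim about the children of $v_3$) is only that every child-subtree of $v_3$ is a $P_3$; $v_3$ may carry several such branches. Because of this, your $T':=T-(V(T_3)\setminus\{v_3\})$ deletes all of these extra $P_3$-branches, which cannot be reinstated by $\mathcal{O}_1$ at $v_3$ plus $\mathcal{O}_2$ at $v_1,v_2$ (an $\mathcal{O}_2$ step only adds a single leaf at a $C$-vertex), and the arithmetic collapses: in the spider your $T'$ is a $P_4$, so $\gamma'(T')=\gamma'(T)-2$ and $\gamma'_t(T')=\gamma'_t(T)-4$, not the claimed $-1$ and $-2$. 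The paper avoids this by deleting only $\{v_0,v_1,v_2\}$, so that $v_3$ keeps its remaining $P_3$-branches in $\overline{T}$.

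The second gap is that the hardest part of the argument is only asserted, not carried out. To apply $\mathcal{O}_1$ at $v_3$ you must verify, inside the labelled tree $T'\in\mathcal{T}$, that every $C$-vertex at distance $2$ from $v_3$ has a leaf neighbour and that every $C$--$C$ edge at distance $1$ from $v_3$ satisfies the required alternative. Your final paragraph says this "translates into precisely the two conditions required by $\mathcal{O}_1$," but this is exactly where the paper does its real work: its Claims about the children of $v_4$, about $v_5$, and (when $diam(T)>6$) about $v_4,v_5$ both being support vertices are proved by nontrivial exchange arguments that build total edge dominating sets of size at most $2\gamma'(T)-1$ to reach contradictions. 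You also cannot simply "choose $F$ of the form $F^*\cup\{v_1v_2\}$ with $F^*$ the $C$--$C$ edge set of $T'$" before knowing $T'\in\mathcal{T}$; the induction only yields that labelling after the cut, whereas the structural facts about $v_4,v_5$ must be established in $T$ beforehand. As it stands the induction does not close.
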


\begin{proof}
We proceed by induction on the edge size of a non-star tree $T$ with $\gamma'_{t}(T)=2\gamma'(T)$. For the initial step, if $T$ is a tree with $diam(T) =3$, then $T$ is a double star with $\gamma'(T)=1$ and $\gamma'_{t}(T)=2$, so we can obtain $T$ from a labelled $P_4$ by doing a series of Operation $\mathcal{O}_2$. By Corollaries \ref{Co:diam4} and \ref{diam=5}, if $T$ is a tree with $diam(T)=4$ or $5$, then $T$ does not satisfy $\gamma'_{t}(T)=2\gamma'(T)$. In what follows let $T$ be a tree of edge size $m$ and diameter at least 6 with $\gamma'_{t}(T)=2\gamma'(T)$. For the inductive hypothesis, we assume that every tree $\overline{T}$ of edge size less than $m$ with $\gamma'_{t}(\overline{T})=2\gamma'(\overline{T})$ is in $\mathcal {T}$.

If a support vertex $v$ has two leaf neighbor in $T$ with $\gamma'_{t}(T)=2\gamma'(T)$ and $w$ is one of leaf neighbors of $v$, then $v$ is still a support vertex in $\overline{T}=T-w$. Combined with Theorem \ref{Th:NoLeafEdge}, a minimum edge dominating set (resp. a minimum total edge set) of $\overline{T}$ containing no leaf edges is exactly a minimum edge edge dominating set (resp. a minimum total edge set) of $T$ containing no leaf edges. So $\gamma'(\overline{T})= \gamma'(T)$, $\gamma'_{t}(\overline{T})= \gamma'_{t}(T)$. Therefore, $2\gamma'(\overline{T})= 2\gamma'(T)=\gamma'_{t}(T)=\gamma'_{t}(\overline{T})$. By the inductive hypothesis, $\overline{T}\in \mathcal{T}$ with a labeling. By Observation \ref{obs:1} (1), the support vertex $v$ is labelled $C$ in $\overline{T}$. Thus we can obtain the tree $T$ by applying Operation $\mathcal{O}_{2}$ to $\overline{T}$.

Let $P$ be a longest path in $T$, say $P=v_{0}v_{1}\ldots v_{t}$ for some $t$ ($t \geqslant 6$) and denoted by $e_{i}=v_{i}v_{i+1}$.  If $v_2$ has a leaf neighbor, say $v'_1$, let $\overline{T}=T-v'_1$. By Theorem \ref{Th:NoLeafEdge}, $\overline{T}$ has a $\gamma'(\overline{T})$-set (resp. a $\gamma'_t(\overline{T})$-set ) containing $e_1$, which is still a $\gamma'(T)$-set (resp. a $\gamma'_t(T)$-set), so
$\gamma'_t(\overline{T})=\gamma'_t(T)=2\gamma'(T)=2\gamma'(\overline{T}).$ By the inductive hypothesis, $\overline{T}\in \mathcal{T}$ with a labelling. By Observation \ref{obs:1} (1) and (2), the vertices $v_1$ and $v_2$ are labelled $C$ in $\overline{T}$. Thus we can obtain the tree $T$ by applying Operation $\mathcal{O}_{2}$ to $\overline{T}$.

In what follows we assume that each support vertex of $T$ has exactly one leaf neighbor and $v_2$ is not a support vertex. Let $F$ be a $\gamma'(T)$-set of $T$ containing non-leaf edges by Theorem \ref{Th:NoLeafEdge}, thus $e_1\in F$. For convenience, we root $T$ at the vertex $v_{t}$.

\begin{claim}\label{Cl:1=2IsTv3Chi}
For every child $v$ of $v_3$, the subtree of $T-v_3$ containing $v$ is exactly $P_3$.
\end{claim}

By contradiction. If $v$ is a leaf, then there exists an edge incident with $v_3$ in $F$, say $e$. Note that $e_1\in F$. But $N[e]\cap N[e_1]\neq \emptyset$, a contradiction with Lemma \ref{Le:1=2CloNeiIntEmp}. If $v$ has only leaf children, then $vv_3\in F$. Similarly, we can obtain a contradiction because $N[vv_3]\cap N[e_1]\neq \emptyset$. If $v$ has at least two support children, then $|E(v)\cap F|\geqslant 2$, a contradiction. So $v$ has exactly one support child, combined with the same role of $v$ as $v_2$ in the choice of $P$ and the assumption, we obtain the claim.

\begin{claim}\label{Cl:1=2IsTv4Chi}
For  a child $v'_3$ of $v_4$, the length of a longest path starting at $v'_3$  in the subtree $T-v_4$ containing $v'_3$ is not 2.
\end{claim}

Assume to the contrary that there exists  one child $v'_3$ of $v_4$ such that  the length of a longest path $P$ starting at $v'_3$  in the subtree $T-v_4$ containing $v'_3$ is 2, say $P=v'_3v'_2v'_1$. Obviously, $v'_3\neq v_3$ and $v'_3v'_2\in F$.  Combined with Lemma \ref{Le:1=2CloNeiIntEmp} and $e_1\in F$, $E(v_4)\cap F=\emptyset$, then $e_3$ is not dominated by $F$, a contradiction.

 \begin{claim}\label{Cl:1=2IsTv5Chi}
If there exists a child $v'_3$ of $v_4$ such that  the subtree of $T-v_4$ containing $v'_3$ is $P_2$, then $v_5$ has no leaf child.
 \end{claim}

Similar to the analysis of Claims \ref{Cl:1=2IsTv3Chi} and \ref{Cl:1=2IsTv4Chi}, we can show it by contradiction.

\begin{claim}\label{Cl:1=2IsTv6Chi}
If $diam(T)>6$ and there exist no children $v'_3$ of $v_4$ such that  the subtree of $T-v_4$ containing $v'_3$ is $P_2$, then $v_4$ and $v_5$ are both support vertices.
 \end{claim}

Suppose to the contrary. Since there is no subtree of $T\setminus \{v_4\}$ containing $v'_3$ isomorphic to $P_2$.

\begin{figure*}[htbp]
\centering
\subfigure[A total edge dominating set $F_t$ of $T$;]{
    \begin{minipage}[t]{0.4\linewidth}
        \centering
          \label{Fig:1=2a}
        \includegraphics[scale=0.45]{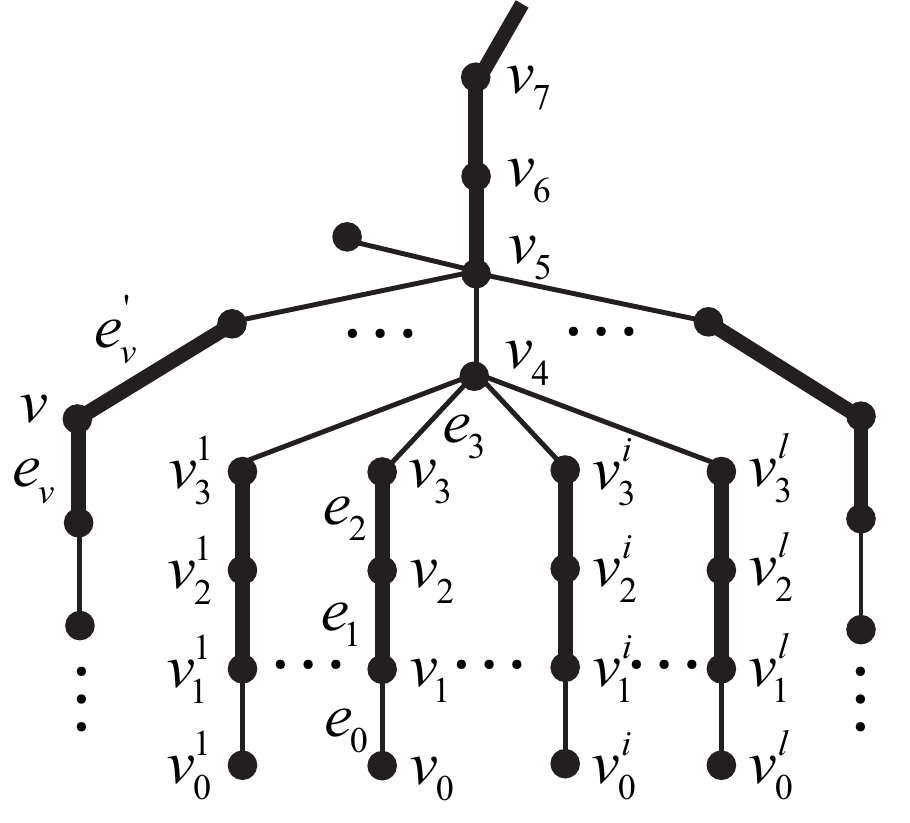}
        \vspace{0.12cm}
           \end{minipage}
}
\subfigure[a minimum  total edge dominating set $F'_t$ of $T$.]{
    \begin{minipage}[t]{0.4\linewidth}
        \centering
        \label{Fig:1=2b}
        \includegraphics[scale=0.45]{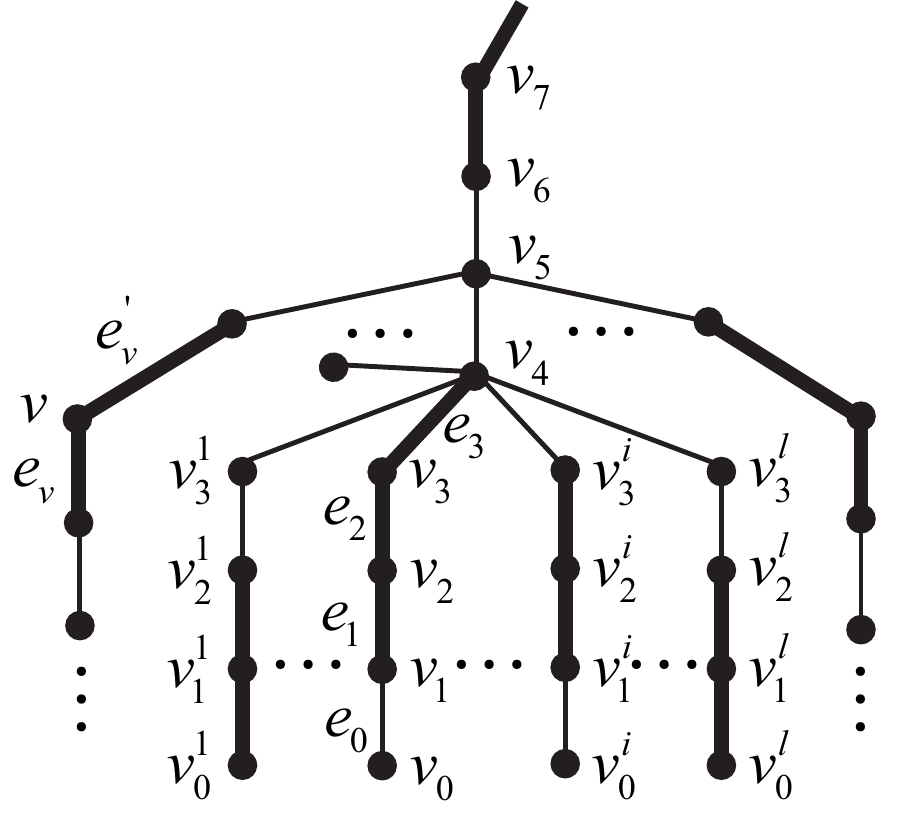}
        \vspace{0.12cm}
    \end{minipage}
}
\centering
\caption{Illstration for Claim \ref{Cl:1=2IsTv6Chi} in Lemma \ref{Le:1=2IsT}.}
\vspace{-0.3cm}
\label{Fig:1=2IsTv_6Chi}
\end{figure*}

Let $\{v_3^1, v_3^2, \ldots, v_3^s\}$ be the set of non-leaf children of $v_4$ for some positive integer $s$. For any $1\leqslant i\leqslant l$,  combined the assumption and Claim \ref{Cl:1=2IsTv4Chi}, the length of a longest path starting at $v_3^i$ in the subtree $T-v_4$ containing $v_3^i$ is 3. By the symmetry of $v_3$ and $v_3^i$ and Claim \ref{Cl:1=2IsTv3Chi}, the subtree of $T-v_4$ containing $v_3^i$ is exactly $P_4$, say $v_3^iv_2^iv_1^iv_0^i$.  By the choice of $F$ and Lemma \ref{Le:1=2CloNeiIntEmp}, for any $1\leqslant i\leqslant s$, $v_2^iv_1^i\in F$ and $E(v_3^i)\cap F=\emptyset$. So $\{e_{4}\}\subseteq F$.

If $v_4$ is not support,  let $e'_7$ be the unique edge in  $(E_{T}(v_7)- e_6)\cap F\neq \emptyset$ by $\{e_{4}\}\subseteq F$ and Lemma \ref{Le:1=2CloNeiIntEmp}.  Now we construct a TED-set $F_t$ of $T$ from ${F}_{0}=F-e_{4}+e_5$ by first adding the common neighbor edge $e_6$ of $e_5$ and $e'_7$ in $F_0$, second, for any $1\leqslant i\leqslant l$, adding $v_2^iv_3^i$ into $F_0$, and adding a neighbor edge of each edge in $F_0- \{e_5, e'_7\}+\{v_1^1v_2^1, v_1^2v_2^2, \cdots, v_1^sv_2^s \}$ (see Fig. \ref{Fig:1=2IsTv_6Chi}(a)). It is obvious that $F_t$ is a TED-set of $T$ and $|F_t|\leqslant 2|F|-1$, a contradiction.

If $v_5$ is not support, let $A$ be the set of vertices of distance 2 from $v_5$ in the subtree of $T-e_4$ containing $v_5$. For $v\in A$, $|F\cap E(v)|=1$, say $e_v$, by $e_4\in F$ and Lemma \ref{Le:1=2CloNeiIntEmp}, and denoted by $e'_v$ the unique edge of $E(v)$ of distance 1 from $v_5$. Note that $e'_v\neq e_v$ because $v_5$ is of distance 2 from $e_v$. Now we can construct a TED-set $F'_t$ of $T$ from ${F}'_{0}=F-e_{4}+e_3$ by first adding $e_2$ and the set $\{e'_v|v\in A\}$ into $F'_0$, second adding a neighbor edge of each edge in $F'_0-\{e_v| v\in A\}- \{e_1, e_3\}$ (see Fig. \ref{Fig:1=2IsTv_6Chi}(b)). Note that $\{e_1, e_2, e_3\}\in F'_t$. It is obvious that $F'_t$ is a TED-set of $T$ and $|F'_t|\leqslant 2|F|-1$, a contradiction. So we prove Claim \ref{Cl:1=2IsTv6Chi}.

By Claim \ref{Cl:1=2IsTv3Chi} and the assumption that each support vertex of $T$ has exactly one leaf neighbor and $v_2$ is not a support vertex, $d(v_1)=d(v_2)=2$, thus the subgraph induced by $\{v_0, v_1, v_2, v_3\}$ is $P_4$.  Let $\overline{T}=T-\{v_{0},v_{1},v_{2}\}$.

\begin{claim}\label{Cl:1=2IsTCut1=2}
$\gamma'_{t}(\overline{T}) = 2\gamma'(\overline{T})$.
\end{claim}

Combined with Lemma \ref{Le:1=2CloNeiIntEmp} and  $e_1\in F$, we have $E(v_3)\cap F= \emptyset$, thus the restriction of $F$ on $\overline{T}$ is an ED-set of $\overline{T}$, further $\gamma'(\overline{T})\leqslant\gamma'(T)-1$.  Combined with the obvious inequality: $\gamma'_{t}(T)\leqslant\gamma'_{t}(\overline{T})+2$, we have $2\gamma'(\overline{T})\leqslant2(\gamma'(T)-1) = 2\gamma'(T)-2 = \gamma'_{t}(T)-2\leqslant\gamma'_{t}(\overline{T})\leqslant2\gamma'(\overline{T})$.
Consequently we must have equality throughout this inequality chain. Particularly, we have $\gamma'_{t}(\overline{T})=2\gamma'(\overline{T})$.

By Claim \ref{Cl:1=2IsTCut1=2} and the inductive hypothesis, $\overline{T}\in \mathcal{T}$ with a labelling. In what follows we show that $T$ is obtained from $\overline{T}$ by Operation $\mathcal{O}_1$ (the identifying vertex is $v_3$, the role of $v$). By Claim \ref{Cl:1=2IsTv3Chi} and Observation \ref{obs:1} (1), (2), (3), we have $l(v_3)=L$, $l(v_4)=C$. In the case $l(v_5)=C$, if $diam(T)=6$, then all neighbors of $v_5$ other than $v_4$ are all leaves; if $diam(T)>6$, then each child of $v_4$ is labelled $L$ and by Claim \ref{Cl:1=2IsTv6Chi}, $v_4$ and $v_5$ have both one leaf neighbor. In the other case $l(v_5)=L$, there is one child $v'_3$ of $v_4$ labelling C. Combined with Corollary \ref{obs:1} and Claim \ref{Cl:1=2IsTv4Chi}, $v'_{3}$ has only leaf children. For the other C-C edges $wu$ of distance 1 from $v_3$, say $v_3$ is adjacent to $u$, i.e., $u$ is the child of $v_3$, by Claim \ref{Cl:1=2IsTv3Chi}, $N(w)-u$ are all leaves. Combined all cases above, it is obvious that the $C-C$ edge incident with $v_4$ of distance 1 from $v_3$ satisfies the condition in Operation $\mathcal{O}_1$. Therefore we can apply Operation $\mathcal{O}_1$ from $\overline{T}$ to obtain the tree $T$, further, ${T}\in \mathcal{T}$.
\end{proof}

As an immediate consequence of Lemmas \ref{Le:TIs1=2} and \ref{Le:1=2IsT}, we have

\begin{theorem}\label{Th:1=2NecSuf}
A non-star tree is a $(\gamma'_{t}=2\gamma')$-tree if and only if $T\in \mathcal{T}$.
 \end{theorem}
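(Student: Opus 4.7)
The statement is the combination of the preceding two lemmas (Lemma \ref{Le:TIs1=2} for sufficiency and Lemma \ref{Le:1=2IsT} for necessity), so my plan is simply to invoke them and explain the logical packaging; no further work is required beyond what they already provide.

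For the sufficiency direction (if $T\in\mathcal{T}$ then $\gamma'_t(T)=2\gamma'(T)$), Lemma \ref{Le:TIs1=2} gives exactly this. The underlying approach there is induction on the number of edges: the base case $P_4$ satisfies $\gamma'(P_4)=1,\gamma'_t(P_4)=2$, and then one verifies that each of $\mathcal{O}_1$ and $\mathcal{O}_2$ increases $\gamma'$ and $\gamma'_t$ in a $1$-to-$2$ ratio, using the key structural fact provided by Lemma \ref{Le:CCEDS} that the $C$--$C$ edges form a minimum edge dominating set in every labelled tree of $\mathcal{T}$.

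For the necessity direction (if $T$ is a non-star tree with $\gamma'_t(T)=2\gamma'(T)$ then $T\in\mathcal{T}$), Lemma \ref{Le:1=2IsT} gives exactly this. The approach is again induction on the number of edges. The small-diameter cases ($\text{diam}(T)=3,4,5$) are dispatched by direct inspection plus Corollaries \ref{Co:diam4} and \ref{diam=5}, which already prevent $\gamma'_t=2\gamma'$ from holding when $\text{diam}(T)\in\{4,5\}$. For $\text{diam}(T)\geq 6$, one chooses a longest path and peels off either a leaf of a strong support vertex (inverse of $\mathcal{O}_2$) or a $P_3$-branch from the third vertex of the path (inverse of $\mathcal{O}_1$); Lemma \ref{Le:1=2CloNeiIntEmp} forces enough rigidity on a $\gamma'(T)$-set to show that the reduction preserves the equality $\gamma'_t=2\gamma'$, so that the inductive hypothesis applies.

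Thus the entire proof of Theorem \ref{Th:1=2NecSuf} is one sentence: combine Lemmas \ref{Le:TIs1=2} and \ref{Le:1=2IsT}. There is no genuine obstacle here — all of the real work has already been done in those two lemmas — so the only thing to guard against is the exceptional case of stars and double stars, which are explicitly excluded (or, in the case of double stars, reachable from $P_4$ by a sequence of $\mathcal{O}_2$'s as noted at the start of the diameter-$3$ argument in Lemma \ref{Le:1=2IsT}).
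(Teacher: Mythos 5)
Your proposal matches the paper exactly: the theorem is stated there as an immediate consequence of Lemmas \ref{Le:TIs1=2} (sufficiency) and \ref{Le:1=2IsT} (necessity), which is precisely the one-sentence combination you give. Your sketch of how those two lemmas work (induction on edge size, using Lemma \ref{Le:CCEDS} and Lemma \ref{Le:1=2CloNeiIntEmp}) also reflects the paper's arguments, so nothing further is needed.
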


\subsection{$(\gamma'_{t}=\gamma')$-trees}

In this subsection we provide a constructive characterization of $(\gamma'_{t}=\gamma')$-trees $T$, i.e., a tree satisfying $\gamma'_{t}(T)=\gamma'(T)$. We use edge labelling to describe a procedure of constructing $T$ recursively, which is different from the vertex labelling in the previous subsection. By Example \ref{Ex:StaDou} and Corollary \ref{Co:diam4}, for the initial step, let $T$ be a tree with $diam(T)=4$, in which each edge is either a leaf edge or a support edge, we label support edges in $T$ with $S$, leaf edges adjacent to at least two non-leaf-edges with $L_{2}$, other leaf edges with $L_{1}$.

Let $\mathcal{T}_{t}$ be the family of edge-labelled trees $T$ that contains edge-labelled trees with diameter 4 and is under the five operations $\mathcal{O}_{1}$, $\mathcal{O}_{2}$, $\mathcal{O}_{3}$, $\mathcal{O}_{4}$, $\mathcal{O}_{5}$ listed below: constructing a bigger tree from a smaller tree in $\mathcal{T}_{t}$. For convenience, we call an edge labelled $S$ (resp. $L_1, L_2$) in $T\in \mathcal{T}_t$ an $S$ (resp. $L_1, L_2$)-edge, and denote by $D(T)$ the set of $S$-edges. First, according to the label of the associated edges of the vertex $v$ in an edge-labelled tree $T\in \mathcal{T}_t$, we partition the vertex set of $T$ into the following four subsets $A_{1}, A_{2}, B$ and $C$ listed below:
\begin{align*}
A_{1}:=&\{v| \text{ Only one } S-\text{edge in } E(v) \};\\
A_{2}:=&\{v| \text{ At least two } S-\text{edges in } E(v)\};\\
B:=&\{v|\text{ All edge in } E(v) \text{ are } L_{2}-\text{edges}\};\\
C :=& V-A_{1}-A_{2}-B.
\end{align*}

\begin{figure*}[h]
\subfigure[ \scriptsize{$v\in A_1$: exactly one $S$-edge.}]{
    \begin{minipage}[t]{0.22\linewidth}
    \centering
  \includegraphics[scale=0.45]{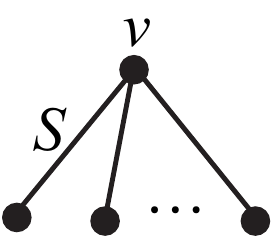}
        \vspace{0.02cm}
        \end{minipage}
}
\subfigure[\scriptsize{$v\in A_2$: at least two $S$-edges.}]{
  \begin{minipage}[t]{0.22\linewidth}
  \centering
    \includegraphics[scale=0.45]{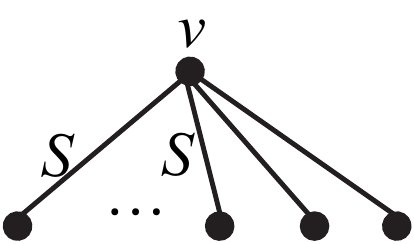}
    \vspace{0.02cm}
  \end{minipage}
}
\subfigure[\scriptsize{$v\in B$: all edges in $E(v)$ are $L_2$-edges.}]{
  \begin{minipage}[t]{0.22\linewidth}
  \centering
    \includegraphics[scale=0.45]{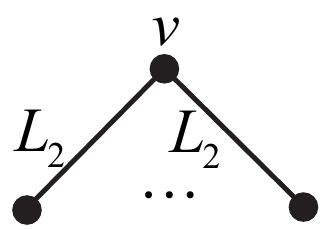}
    \vspace{0.02cm}
  \end{minipage}
}
\subfigure[\scriptsize{$v\in C$: at least one $L_1$-edge but no $S$-edges in $E(v)$.}]{
  \begin{minipage}[t]{0.22\linewidth}
  \centering
    \includegraphics[scale=0.45]{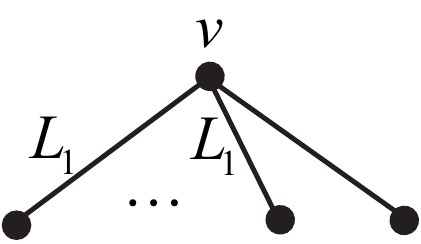}
    \vspace{0.02cm}
  \end{minipage}
}
\centering
\caption{Vertex partition of $T\in \mathcal{T}$.}
\vspace{-0.3cm}
\label{Fig:1=1VerLab}
\end{figure*}

Now, we list the five operations $\mathcal{O}_{1}$, $\mathcal{O}_{2}$, $\mathcal{O}_{3}$, $\mathcal{O}_{4}$, $\mathcal{O}_{5}$:

\noindent\textbf{Operation} \textbf{$\mathcal{O}_{1}$}:
Let $T\in \mathcal{T}_{t}$, $v$ a vertex of $T$ belonging to $A_{1}\cup A_{2}$. Construct a bigger tree $T'$ in $\mathcal{T}_{t}$ from $T$ by adding a new vertex $u$ adjacent to $v$. If $v\in A_{1}$, then label $vu$ as $L_1$; (by definition,  $u$ is in $C$,  $A_1, A_2, B$ are unchanged;) if $v\in A_{2}$, then label $vu$ as $L_2$ (note that $u\in B$ and  $A_1, A_2, C$ are unchanged), see Fig. \ref{fig:1=1a}. \\

\noindent\textbf{Operation} \textbf{$\mathcal{O}_{2}$}:
Let $T \in \mathcal{T}_{t}$, $v$ a vertex of $T$ belonging to $A_{2}$.  Construct a bigger tree $T'$ in $\mathcal{T}_{t}$ from $T$ by adding two new adjacent vertices  $u_{1}, u_{2}$, connecting $v$ and $u_1$ and labelling $vu_{1}$ as $S$ and $u_{1}u_{2}$ as $L_{1}$ (obviously, $u_1\in A_1$ and  $u_2\in C$), see Fig. \ref{fig:1=1b}.\\

\noindent\textbf{Operation} \textbf{$\mathcal{O}_{3}$}:
Let $T \in \mathcal{T}_{t}$, $v\notin A_1$ a vertex of $T$ satisfying, in the case $v\in C$, that each $L_1$-edge in $E(v)$ is either adjacent to one leaf edge or contained in a $P_4=vwxy$, whose edges are labelled as $L_1, L_1, L_2$ consecutively and all edges in $E(x)$ are $L_2$-edges except $wx$. Construct a bigger tree $T'$ in $\mathcal{T}_{t}$ from $T$ by adding a new path $u_{1}u_{2}u_{3}u_{4}u_{5}$ to join $v$ and $u_2$, and labelling $u_{2}u_{3}$, $u_{3}u_{4}$ as $S$, $vu_{2}$, $u_{1}u_{2}$, $u_{4}u_{5}$ as $L_{1}$, see Fig. \ref{fig:1=1c}. (From the definition, $u_2, u_4 \in A_1$, $u_3\in A_2$, $u_1, u_5\in C$ and if $v\in B$, then $v$ is moved from $B$ to $C$.)\\

\noindent\textbf{Operation} \textbf{$\mathcal{O}_{4}$}:
Let $T \in \mathcal{T}_{t}$, $v\in B$ a vertex of $T$. Construct a bigger tree $T'$ in $\mathcal{T}_{t}$ from $T$ by adding a new path $u_{1}u_{2}u_{3}u_{4}$ to join $v$ and $u_1$, and labelling $vu_{1}, u_{3}u_{4}$ as $L_{1}$, $u_{1}u_{2}, u_{2}u_{3}$ as $S$, see Fig. \ref{fig:1=1d}. (Similarly, $u_1, u_3\in A_1$, $u_2\in A_2$,  $u_4\in C$, and $v$ is moved from $B$ to $C$.)\\

\noindent\textbf{Operation} \textbf{$\mathcal{O}_{5}$}:
Let $T \in \mathcal{T}_{t}$, $v$ a vertex of $T$. Construct a bigger tree $T'$ in $\mathcal{T}_{t}$ from $T$ by adding a new path $u_{1}u_{2}u_{3}u_{4}u_{5}$ to join $v$ and $u_{3}$, and labelling $vu_3$ as $L_{2}$,  $u_{1}u_{2}, u_{4}u_{5}$ as $L_{1}$, $u_{2}u_{3}, u_{3}u_{4}$ as $S$, see Fig. \ref{fig:1=1e}. (From the definition, $u_2, u_4\in A_1$, $u_1, u_5\in C$, $u_3\in A_2$ and if $v\in B$, then $v$ is moved from $B$ to $C$.)
\\

\begin{figure*}[h]
\subfigure[\scriptsize{Operation $\mathcal{O}_{1}$:  the edge $uv$ is labelled as $L_2$ if $v\in A_2$ and is labelled as $L_1$
        if $v\in A_1$.}]{
    \begin{minipage}[t]{0.3\linewidth}
    \centering
    \label{fig:1=1a}
        \includegraphics[scale=0.45]{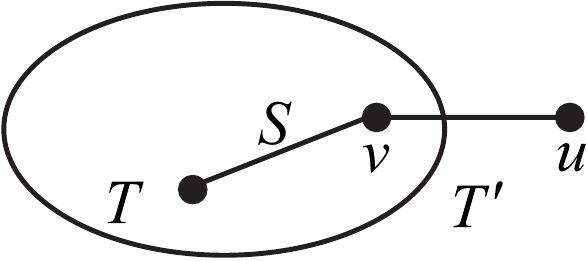}
        \end{minipage}
}
\subfigure[\scriptsize{Operation $\mathcal{O}_{2}$.}]{
  \begin{minipage}[t]{0.3\linewidth}
  \centering
  \label{fig:1=1b}
    \includegraphics[scale=0.45]{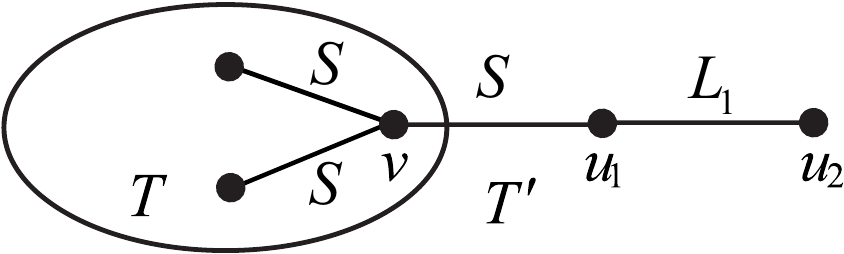}
  \end{minipage}
}
 \subfigure[\scriptsize{Operation $\mathcal{O}_{3}$.}]{
  \begin{minipage}[t]{0.3\linewidth}
  \centering
  \label{fig:1=1c}
    \includegraphics[scale=0.45]{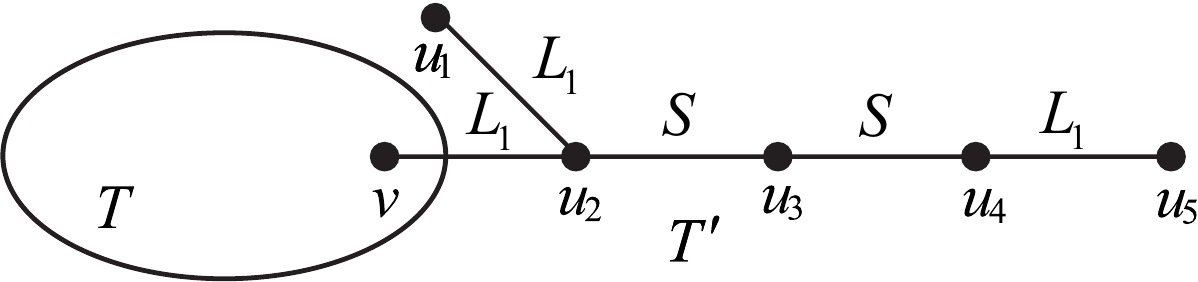}
  \end{minipage}
}

\subfigure[\scriptsize{Operation $\mathcal{O}_{4}$.}]{
  \begin{minipage}[t]{0.46\linewidth}
  \centering
  \label{fig:1=1d}
    \includegraphics[scale=0.45]{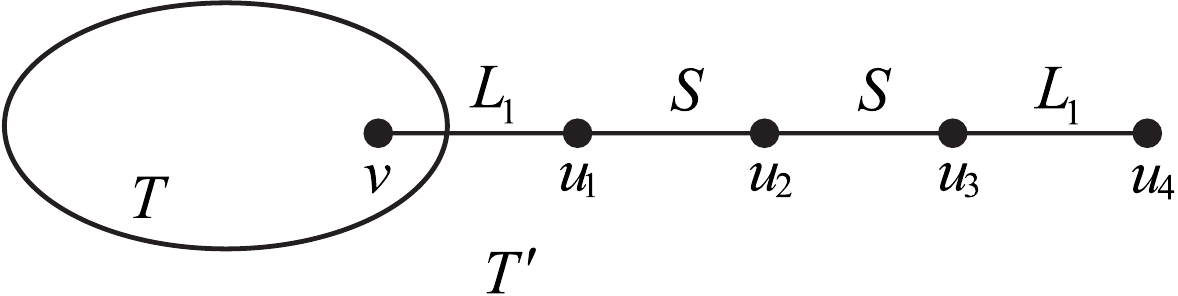}
  \end{minipage}
}
\subfigure[\scriptsize{Operation $\mathcal{O}_{5}$.}]{
  \begin{minipage}[t]{0.46\linewidth}
  \centering
  \label{fig:1=1e}
    \includegraphics[scale=0.45]{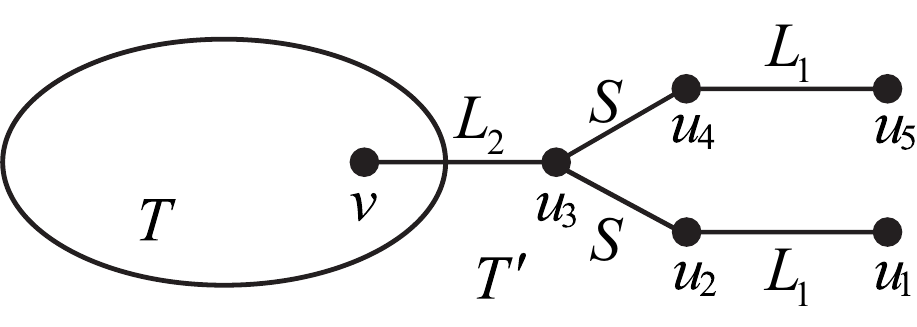}
  \end{minipage}
}
\centering
\caption{Five operations.}
\vspace{-0.3cm}
\label{fig:1=1}
\end{figure*}

From the five operations above, we can get the simple observations as follows.

\begin{observation} \label{ob2}

Let $T\in\mathcal{T}_{t}$.
\begin{enumerate}
\item \label{Ob:EndpOfL1}One endpoint of an $L_1$-edge is incident with exactly one $S$-edge, the other endpoint is incident with either non $S$-edges or at least two $S$-edges.
\item\label{Ob:L2NbSedge} An $L_2$-edge is adjacent to at least two $S$-edges.
\item\label{Ob:LEisL1orL2} A leaf edge is labelled $L_1$ or $L_2$. Furthermore, a leaf edge adjacent to exactly one non-leaf edge $e$ is labelled $L_1$ and $e$ is labelled as $S$.
\item\label{Ob:SedgeisTED} Each edge in $T$ is adjacent to at least one $S$-edge, and each component of the induced subgraph $T[D(T)]$ is a nontrivial star. Further, $D(T)$ is a total edge dominating set of $T$.
\end{enumerate}
\end{observation}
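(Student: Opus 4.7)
The plan is to prove all four parts simultaneously by induction on the number of operations applied to build $T$ from an initial labelled tree of diameter $4$.

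\emph{Base case.} Let $T$ be a labelled tree with $diam(T)=4$, with center $z$. Then every non-leaf edge is a support edge $zv_i$, labelled $S$; every leaf edge at a support vertex $v_i\neq z$ is labelled $L_1$; and the leaf edges at $z$ (if any) are labelled $L_2$. Since $diam(T)=4$ forces $z$ to have at least two non-leaf neighbours, $D(T)=\{zv_1,\ldots,zv_k\}$ induces the non-trivial star $K_{1,k}$ with $k\geq 2$. The four conclusions are then immediate: an $L_1$-edge at $v_i$ has its non-leaf endpoint $v_i$ incident with the unique $S$-edge $zv_i$ and its leaf endpoint incident with none; an $L_2$-edge $zw$ is adjacent to all $k\geq 2$ of the $S$-edges at $z$; every leaf edge carries an $L_1$ or $L_2$ label, and a leaf edge adjacent to exactly one non-leaf edge must be an $L_1$-edge $v_iw$ whose unique non-leaf neighbour $zv_i$ is labelled $S$; and $D(T)$ trivially totally dominates $E(T)$.

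\emph{Inductive step.} Suppose $(1)$--$(4)$ hold for $T\in\mathcal{T}_t$ and $T'$ arises from $T$ by one operation. Since no operation deletes an edge or alters any existing edge's label, the only candidates for failure of $(1)$--$(4)$ are the newly added edges and the edges incident with the attachment vertex $v$. For each operation I would carry out the following local checks at $v$ and at each new vertex of the attached path: (a) every new $L_1$-edge has one endpoint in $A_1$ (with its unique $S$-edge identifiable from the operation) and the other endpoint in $A_2$ or carrying no $S$-edge at all, giving $(1)$; (b) every new $L_2$-edge shares an endpoint with the two new $S$-edges, giving $(2)$; (c) every new leaf edge is labelled $L_1$ or $L_2$ by construction, giving $(3)$; (d) every new edge is adjacent to an $S$-edge. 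For the non-trivial-star part of $(4)$: $\mathcal{O}_1$ creates no $S$-edge; $\mathcal{O}_2$ appends $vu_1$ to the already non-trivial star of $T[D(T)]$ centred at $v\in A_2$; and in $\mathcal{O}_3,\mathcal{O}_4,\mathcal{O}_5$ the two new $S$-edges meet at a freshly created inner vertex of the attached path and hence form a new $K_{1,2}$-component of $T'[D(T')]$ disjoint from every old component, since the attachment edge at $v$ is labelled $L_1$ or $L_2$ and so does not belong to $D(T')$.

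The only bookkeeping obstacle is confirming the vertex-category reassignments announced parenthetically after each operation (``$u_1\in A_1$'', ``$u_2\in C$'', ``$v$ is moved from $B$ to $C$'' in $\mathcal{O}_3$, and so on); once these are verified by direct inspection, the four conclusions reduce to transparent local checks at the attachment vertex and at each new vertex of the attached path. No global argument is required: the five operations were designed precisely so that $(1)$--$(4)$ propagate along the induction.
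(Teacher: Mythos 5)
Your proposal is correct: the induction on the number of operations, with the explicit diameter-4 base case and the routine local checks at the attachment vertex and the new path vertices (including the key points that in $\mathcal{O}_2$ the vertex $v\in A_2$ must be the centre of its star component of $T[D(T)]$, and that in $\mathcal{O}_3$--$\mathcal{O}_5$ the two new $S$-edges form a fresh $K_{1,2}$ component because the attachment edge is not an $S$-edge), is exactly the verification the paper has in mind. The paper states this observation without proof, as immediate from the five operations, so your argument simply fleshes out that same intended justification.
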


\begin{lemma}\label{Le:TIs1=1}
Let $T\in\mathcal{T}_{t}$. Then $D(T)$ is a $\gamma'_{t}(T)$-set and $T$ is a $(\gamma'_{t}=\gamma')$-tree.
\end{lemma}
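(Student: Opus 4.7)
The plan is to reduce both conclusions to the single inequality $\gamma'(T) \geq |D(T)|$, and to prove this by induction on the length of a construction sequence for $T \in \mathcal{T}_t$. Indeed, by Observation \ref{ob2}(\ref{Ob:SedgeisTED}), $D(T)$ is already a TED-set of $T$, so $\gamma'(T) \leq \gamma'_t(T) \leq |D(T)|$; if we also show $\gamma'(T) \geq |D(T)|$, then all three quantities coincide, yielding both assertions of the lemma at once.

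For the base case, let $T$ have diameter $4$. Then every non-leaf edge of $T$ is a support edge and is labelled $S$, so $D(T)$ equals the edge set of the star $S_{1,k}$ used in the proof of Corollary \ref{Co:diam4}. That argument shows $E(S_{1,k})$ is a minimum ED-set, hence $\gamma'(T) = |D(T)|$. For the inductive step, let $T'$ be obtained from $T \in \mathcal{T}_t$ by operation $\mathcal{O}_i$, and set $\Delta_i := |D(T')| - |D(T)|$. From the description of the operations, $\Delta_1 = 0$, $\Delta_2 = 1$, and $\Delta_3 = \Delta_4 = \Delta_5 = 2$. Since $\mathrm{diam}(T') \geq 4$, Theorem \ref{Th:NoLeafEdge} provides a $\gamma'(T')$-set $F'$ containing no leaf edges; the goal is to show $|F'| \geq \gamma'(T) + \Delta_i$, which combined with the inductive hypothesis gives $|F'| \geq |D(T')|$.

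The uniform strategy for each operation is: identify which edges of the newly attached substructure must lie in $F'$ (forced by the pendant edges there and the fact that $F'$ avoids leaf edges), and verify that the remainder $F' \cap E(T)$, possibly after a single local swap, is still an ED-set of $T$. For $\mathcal{O}_1$ this is immediate since the only new edge is a leaf edge, so $F' \subseteq E(T)$. For $\mathcal{O}_2$, the pendant $u_1u_2$ forces $vu_1 \in F'$, and since $v \in A_2$ already carries multiple $S$-edges in $T$, the deficiency of $F' \setminus \{vu_1\}$ as an ED-set of $T$ is confined to edges at $v$ and can be patched without inflating the bound. For $\mathcal{O}_3$, $\mathcal{O}_4$, $\mathcal{O}_5$, the new path has two leaf edges, and a short case analysis shows that $F'$ contains at least the two new $S$-edges (up to an interchange within the new path forced by Theorem \ref{Th:NoLeafEdge}); the structural precondition on $v$ is then exactly what is needed to ensure $F' \cap E(T)$ (possibly with one added edge incident to $v$) is an ED-set of $T$, giving $|F'| \geq \gamma'(T) + 2$.

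The main obstacle is the precondition in $\mathcal{O}_3$ when $v \in C$: the requirement that each $L_1$-edge at $v$ either sees a leaf edge or sits in the specially labelled $P_4$ is precisely what is needed to carry out the ``swap then restrict'' argument without losing domination at $v$ in $T$. I expect to handle this by contradiction, arguing that an $L_1$-edge at $v$ violating the precondition would admit a local exchange producing an ED-set of $T$ strictly smaller than $|D(T)|$, against the inductive hypothesis. The remaining operations are analogous but less delicate, and in each the newly created vertex classes $A_1, A_2, B, C$ agree with the partition read off the updated labelling, so Observation \ref{ob2} continues to apply to $T'$ and supplies the inductive step for the next round.
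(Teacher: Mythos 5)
Your reduction of both conclusions to the single inequality $\gamma'(T)\geqslant |D(T)|$ matches the paper's plan, but the inductive step has a genuine gap: the increment $\gamma'(T')\geqslant \gamma'(T)+\Delta_i$ cannot in general be extracted by restricting a minimum leaf-edge-free ED-set $F'$ of $T'$ to $T$, because $F'$ need not contain $\Delta_i$ new edges. Already your parenthetical ``the new path has two leaf edges'' is false for $\mathcal{O}_4$ (only $u_3u_4$ is a new leaf edge), and the claim that ``$F'$ contains at least the two new $S$-edges (up to an interchange within the new path)'' fails concretely. Take the initial diameter-4 tree $T$ with center $c$, two support children $v_1,v_2$ (each with one leaf $w_i$) and one leaf child $u$; then $cu$ is an $L_2$-edge, $u\in B$, and $\gamma'(T)=|D(T)|=2$. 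Apply $\mathcal{O}_4$ at $v=u$, attaching the path $u_1u_2u_3u_4$. Then $F'=\{cv_1,cv_2,cu,u_2u_3\}$ is a minimum ED-set of $T'$ containing no leaf edges: the pendant edge $u_3u_4$ forces only $u_2u_3$, while $uu_1$ is dominated by the \emph{old} edge $cu$. This $F'$ has a single new edge, its restriction $\{cv_1,cv_2,cu\}$ is a perfectly good ED-set of $T$ of size $\gamma'(T)+1$, and no interchange inside the new path changes this; so restriction yields only $\gamma'(T')\geqslant\gamma'(T)+1$, one short of the needed $+2$. The same difficulty appears under $\mathcal{O}_3$ when $vu_2\in F'$ (repairing the lost domination at $v$ costs exactly the unit you cannot afford), and in your $\mathcal{O}_2$ sketch ``patched without inflating the bound'' is precisely the unresolved point: adding a patch edge gives only $\gamma'(T)\leqslant|F'|$, while a swap keeps size $|F'|$, so neither delivers the $+1$ as written.

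The underlying problem is that your induction hypothesis---the bare numerical equality $\gamma'(T)=|D(T)|$---is too weak: the required increments also depend on how minimum ED-sets of $T$ behave near the attachment vertex $v$ (in the example one needs to know that no cheap dominator of $T$ can be reused across the $L_2$-edge $cu$), and this is not a formal consequence of the equality. That is exactly why the paper strengthens the induction: it carries through the construction a certificate $L$ of $L_1$-edges with $|L|=|D(T)|$, each having exactly one neighbouring $S$-edge, so that distinct edges of $L$ require distinct dominating edges and hence $\gamma'(T)\geqslant|L|=|D(T)|$; the preconditions of $\mathcal{O}_3$ and $\mathcal{O}_4$ are then used \emph{constructively} (not by contradiction, as you propose for the $v\in C$ case of $\mathcal{O}_3$) to exchange an old certificate edge near $v$ for another one, so that the new pendant $L_1$-edges can be added to the certificate. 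To repair your argument you would have to prove and propagate such a stronger invariant (a packing of $L_1$-edges, or an equivalent statement about minimum ED-sets near $B$- and $C$-vertices), which is essentially the paper's proof.
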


\begin{proof}
Let $T\in \mathcal{T}_t$, we first prove that $D(T)$ (simply, $D$) is a $\gamma'_{t}(T)$-set. By Observation \ref{ob2} (\ref{Ob:SedgeisTED}), $D$ is a TED-set of $T$. It is sufficient to find a set $L$ of $L_1$-edges of size $|D|$ such that 
 each edge in $L$ has exactly one neighbor $S$-edge. 
In order to prove that there is such an edge set of each tree $T$ in $\mathcal{T}_{t}$, we proceed by induction on the size $m$ of the edge set of $T$. For the initial step, the leaves adjacent to exactly one non-leaf edge
of $T$ with diameter 4 construct the required set $L$.  For the inductive step, we assume each tree $\overline{T}$ of size less than $m$ in $\mathcal{T}_t$ has  a set $\overline {L}$ of $L_1$-edges such that each edge in $\overline{L}$ has exactly one neighbor $S$-edge. Now we divide five cases as follows:

 \noindent\textbf{Case 1.} $T$ is obtained by applying Operation $\mathcal{O}_{1}$ from $\overline{T}$ and a vertex $u$.

In this case, $D(T)=D(\overline{T})$,  and let  $L=\overline{L}$, which is the desired set for $T$.

\noindent\textbf{Case 2.} $T$ is obtained by applying Operation $\mathcal{O}_{2}$ from $\overline{T}$ and an edge $u_1u_2$ in which a vertex $v$ in $\overline{T}$ is adjacent to $u_1$.

In this case, $D(T)$ is one more $S$-edge than $D(\overline{T})$. By Observation \ref{ob2} (\ref{Ob:EndpOfL1}),  there is no $L_1$-edges in $\overline{L}$ incident with $v$. So $\overline {L}\cup \{u_1u_2 \}$ is a desired set for $T$.

\noindent\textbf{Case 3.} $T$ is obtained by applying Operation $\mathcal{O}_{3}$ from $\overline{T}$ and a path $u_1u_2u_3u_4u_5$.

In this case, $D(T)$ is two more edges than $D(\overline{T})$. If $v\in A_2\cup B$, by the definitions of $A_2, B$ and Observation \ref{ob2} (\ref{Ob:EndpOfL1}), there are no $L_1$-edges incident with $v$ in $\overline{T}$. So $\overline {L}\cup \{u_1u_2, u_4u_5\}$  is a desired set for $T$.

When $v\in C$, if there is no $L_1$-edge incident with $v$ in $\overline{L}$, then $\overline {L}\cup \{u_1u_2, u_4u_5\}$ is a desired set for $T$. Otherwise, let $e'=vw$ be the $L_1$-edge in $\overline {L}$, from the definition of Operation $\mathcal{O}_3$, there is one leaf edge $e''$ incident with $w$ or there exists a $P_4=vwxy$ in $\overline{T}$, whose edges are labelled as $L_1, L_1, L_2$ consecutively and all edges in $E(x)$ are $L_2$-edges except $wx$, then $(\overline {L}-e')\cup \{u_1u_2, u_4u_5, e''\}$ or $(\overline {L}-e')\cup \{u_1u_2, u_4u_5, wx\}$ is a desired set for $T$.

Therefore, we can always find a desired set for $T$ in this case.

\noindent\textbf{Case 4.} $T$ is obtained by applying Operation $\mathcal{O}_{4}$ from $\overline{T}$ and a path $u_1u_2u_3u_4$.

In this case, $D(T)$ is two more edges than $D(\overline{T})$.
If there is no $L_1$-edge in $\overline {L}$ adjacent to some edge in $E(v)$, then $\overline {L}\cup \{vu_1, u_3u_4, \} $ is a desired set for $T$. Otherwise, let $wx$ be the $L_1$-edge in $\overline {L}$ adjacent to some edge in $E(v)$. By Observation \ref{ob2} (\ref{Ob:EndpOfL1}),(\ref{Ob:L2NbSedge}), without loss of generality, assume $x\in A_1$, then there is an $L_1$-edge $xy$ in $E(x)$ such that $y$ is either a leaf vertex or only incident with $L_2$-edges except $xy$.
So $(\overline {L}-wx)\cup \{vu_1, u_3u_4, yx\}$ is a desired set for $T$.

Hence, we can always find a desired set for $T$ in this case.

\noindent\textbf{Case 5.} $T$ is obtained by applying Operation $\mathcal{O}_{5}$ from $\overline{T}$ and a path $u_1u_2u_3u_4u_5$.

In this case, $D(T)$ is two more edges than $D(\overline{T})$. So $\overline {L}\cup \{u_1u_2, u_4u_5\}$ is a desired  edge set  for $T$.

Combined the five cases above, for $T \in \mathcal{T}_{t}$, we can always find an edge set $L$ collecting $L_1$-edge such that each edge in $L$ has exactly one neighbor $S$-edge. Since the edges in $L$ need at least $|L|$ edges to dominate, $\gamma'(T)\geqslant |L|=|D|$. Hence, $D$ is a $\gamma'_{t}(T)$-set and $T$ is a $(\gamma'_{t}=\gamma')$-tree
\end{proof}

\begin{lemma}\label{Le:1=1NonTriSta}
Let $T$ be a $(\gamma'_{t}=\gamma')$-tree, $F_{t}$ a $\gamma'_{t}(T)$-set. Then any component of the induced subgraph $T[F_{t}]$ is nontrivial star.
\end{lemma}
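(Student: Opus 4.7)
The plan is to exploit the double role of $F_t$: since $|F_t| = \gamma'_t(T) = \gamma'(T)$ and every TED-set is an ED-set, $F_t$ is simultaneously a minimum ED-set of $T$. Using this minimality against the TED property will force a very rigid local structure on $F_t$.

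First I would establish what I think of as a ``private edge lemma'' for $F_t$: for each $e \in F_t$, the set $F_t \setminus \{e\}$ has size $\gamma'(T)-1$ and therefore cannot be an ED-set. So there must be an edge not in $F_t \setminus \{e\}$ that is left undominated. The candidate $e$ itself is ruled out, because the TED property gives $e$ a neighbour in $F_t$, and that neighbour necessarily lies in $F_t \setminus \{e\}$. Hence there exists $g_e \in E(T) \setminus F_t$ adjacent to $e$ whose only neighbour in $F_t$ is $e$.

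Next I would use this to control components of $T[F_t]$. A component consisting of a single edge is ruled out immediately by the TED property (that edge would have no neighbour in $F_t$), so every component has at least two edges. For the star property, suppose for contradiction that some component of $T[F_t]$ has diameter at least $3$; then it contains a path $v_0 v_1 v_2 v_3$ whose three edges $e_1 = v_0v_1$, $e_2 = v_1v_2$, $e_3 = v_2v_3$ all lie in $F_t$. Apply the private edge lemma to the middle edge $e_2$: the edge $g_{e_2}$ is adjacent to $e_2 = v_1v_2$, so $g_{e_2}$ is incident with either $v_1$ or $v_2$. In the first case $g_{e_2}$ shares $v_1$ with $e_1 \in F_t$, in the second it shares $v_2$ with $e_3 \in F_t$, and in both cases $g_{e_2}$ has a neighbour in $F_t$ distinct from $e_2$, contradicting its defining property. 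Therefore every component has diameter at most $2$, i.e.\ is a star, and combined with the previous paragraph it is a nontrivial star.

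I do not foresee a real obstacle here; the only subtle point is spotting the right invariant (the private edge $g_e$) and remembering to rule out $g = e$ using the TED hypothesis. Once that is in place the $P_4$ obstruction argument is a one-line case analysis on which side of $e_2$ the private edge sits.
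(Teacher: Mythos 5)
Your proof is correct and is essentially the paper's argument: the paper simply notes that if $T[F_t]$ contained a path on four vertices, the middle edge could be deleted to give an ED-set of size $\gamma'_t(T)-1=\gamma'(T)-1$, contradicting minimality, which is exactly the contrapositive of your private-edge lemma applied to $e_2$. Your explicit treatment of single-edge components via the TED property is left implicit in the paper but is the same observation.
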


\begin{proof}
By contradiction. If there is a $P=v_1v_2v_3v_4$ in $T[F_{t}]$, then the edges which are dominated by $v_2v_3$ are also dominated by $v_1v_2$ or $v_3v_3$. So $F_{t}-v_2v_3$ is an edge dominating set with cardinality $|F_{t}|-1$, a contradiction. Hence every component of $T[F_{t}]$ is a nontrivial star.
\end{proof}

\begin{lemma}\label{Le:1=1IsT}
Let $T$ be a $(\gamma'_{t}=\gamma')$-tree. Then $T\in\mathcal{T}_{t}$.
\end{lemma}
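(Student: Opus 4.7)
The plan is to prove this converse by induction on the number of edges of $T$, mirroring the overall strategy of Lemma \ref{Le:1=2IsT} in the previous subsection but adapting it to edge labels. For the base case, by Example \ref{Ex:StaDou} and Corollary \ref{Co:diam4}, every tree with $\mathrm{diam}(T)\leqslant 4$ that satisfies $\gamma'_t(T)=\gamma'(T)$ is either a star, a double star, or a diameter-$4$ tree, and each of these is already in $\mathcal{T}_t$ after giving the canonical edge labelling ($S$ to support edges, $L_1$ or $L_2$ to leaf edges). So I may assume $\mathrm{diam}(T)\geqslant 5$ and let $P=v_0v_1\cdots v_t$ be a longest path in $T$ with $t\geqslant 5$, rooting at $v_t$.

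The main engine of the argument is Lemma \ref{Le:1=1NonTriSta}: any $\gamma'_t(T)$-set $F_t$ decomposes the subgraph $T[F_t]$ into a disjoint union of nontrivial stars. Because $v_0v_1$ is a leaf edge, some edge of $E(v_1)\cup E(v_2)$ lies in $F_t$ and must sit inside one of these stars; using the longest-path property (every child of $v_2$ besides $v_1$ is a leaf, and every child of $v_3$ other than $v_2$ is a leaf or a support vertex with only leaf children) I can locate a small pendant subtree $H$ hanging at some $v_k$ ($2\leqslant k\leqslant 4$) whose $F_t$-contribution is exactly the center of such a star plus its tendrils. The candidate pendants $H$ to peel off correspond precisely to the objects appended by the five operations: a single leaf (reverse of $\mathcal{O}_1$), a length-two pendant $u_1u_2$ with an $S$-edge (reverse of $\mathcal{O}_2$), a pendant $P_5$ attached through an $L_1$-edge (reverse of $\mathcal{O}_3$), a pendant $P_4$ attached through an $L_1$-edge at a $B$-vertex (reverse of $\mathcal{O}_4$), or a pendant $P_5$ attached in the middle through an $L_2$-edge (reverse of $\mathcal{O}_5$).

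The execution proceeds by a case analysis on the ``bottom three levels'' of the rooted tree. First, if a support vertex $v_1$ has $\geqslant 2$ leaves, or if $v_2$ has a leaf child, then deleting a redundant leaf yields $\overline{T}=T-\ell$ with $\gamma'(\overline{T})=\gamma'(T)$ and $\gamma'_t(\overline{T})=\gamma'_t(T)$, so $\overline{T}$ is a $(\gamma'_t=\gamma')$-tree, hence in $\mathcal{T}_t$ by induction, and $T$ is obtained from $\overline{T}$ by $\mathcal{O}_1$ applied to $v_1$ (which is in $A_1\cup A_2$). Otherwise I may assume $v_1$ has exactly one leaf and $v_2$ has no leaf children. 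In this setup the subtree rooted at $v_2$ is a $P_3$, and depending on whether $v_3$ has other children, whether those children are support vertices, and whether $v_3$ itself is adjacent to a leaf, I remove either the $P_3$, the $P_4$ $v_0v_1v_2v_3$, or a longer pendant $P_5$ extending through $v_3$. In each subcase I use the identities $\gamma'_t(T)=\gamma'_t(\overline{T})+k$ and $\gamma'(T)=\gamma'(\overline{T})+k$ (with $k\in\{1,2\}$) — obtained by noting that any $F_t$ can be chosen containing the local $S$-edges, and comparing to a local edge-dominating set built from the same $S$-edges — to conclude $\gamma'_t(\overline{T})=\gamma'(\overline{T})$. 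Invoking the inductive hypothesis and reading off the labels forced on the remaining vertex $v$ where the pendant was attached, I match each configuration to exactly one of $\mathcal{O}_2,\mathcal{O}_3,\mathcal{O}_4,\mathcal{O}_5$ and verify that the side conditions (e.g.\ ``$v\notin A_1$'' in $\mathcal{O}_3$, ``$v\in B$'' in $\mathcal{O}_4$, the $L_1,L_1,L_2$ $P_4$-condition in $\mathcal{O}_3$) are satisfied.

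The delicate step — and the place where I expect most of the work to go — is the verification that the vertex $v$ of $\overline{T}$ where the pendant is reattached has the correct type ($A_1$, $A_2$, $B$, or $C$) required by the chosen operation, and that the side condition in $\mathcal{O}_3$ really holds. This amounts to translating the longest-path hypothesis on $T$ into statements about the edge labels that the inductive hypothesis assigns to $\overline{T}$. The key observation that makes this translation work is Observation \ref{ob2}(\ref{Ob:SedgeisTED}), which tells me that $D(\overline{T})$ realises $\gamma'_t(\overline{T})$ and has its stars concentrated around $A_2$-vertices; this determines the label of every edge incident with $v$ and thus its type. With this correspondence in hand, each case of the inductive step closes, finishing the proof that $T\in\mathcal{T}_t$.
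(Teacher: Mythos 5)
Your high-level plan does coincide with the paper's strategy: induct on the number of edges, reduce by deleting redundant leaves, take a longest path $P=v_0v_1\cdots v_t$, peel off a pendant corresponding to the reverse of one of $\mathcal{O}_1,\ldots,\mathcal{O}_5$, use a chain of the form $\gamma'(\overline{T})+k\leqslant\gamma'_t(\overline{T})+k\leqslant\gamma'_t(T)=\gamma'(T)\leqslant\gamma'(\overline{T})+k$ to force $\gamma'_t(\overline{T})=\gamma'(\overline{T})$, and then check that the attachment vertex has the type required by the chosen operation. But the proposal stops exactly where the real work begins. First, the assertion that one can always locate, at some $v_k$ with $2\leqslant k\leqslant 4$, a pendant of one of the five prescribed shapes is not automatic: one must rule out or separately treat the many possible configurations hanging off $v_3$, $v_4$ and $v_5$ (a child of $v_4$ or $v_5$ whose pendant subtree is a $P_2$, a $P_3$, a path with an extra leaf, etc.), and one must also justify that the restriction of $F_t$ to $\overline{T}$ is a TED-set of $\overline{T}$, which fails unless one first establishes facts such as $E(v_4)\cap F_t=\emptyset$ or $|E(v_4)\cap F_t|\geqslant 2$, or first modifies $F_t$. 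In the paper this is the bulk of the proof: roughly ten claims and standing assumptions, each proved by an exchange argument that produces an ED-set of size $|F_t|-1$ and contradicts $\gamma'_t(T)=\gamma'(T)$. Your sketch ("remove either the $P_3$, the $P_4$, or a longer pendant $P_5$'') does not even list the case $d(v_2)=3$, where the removed pendant is the five-vertex subtree $\{v_0,v_1,v_2,v'_1,v'_0\}$ attached at $v_3$ via $\mathcal{O}_5$.

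Second, your justification of the type/side-condition check --- that Observation \ref{ob2}(\ref{Ob:SedgeisTED}) ``determines the label of every edge incident with $v$'' --- is not correct as stated (and the fact that $D(\overline{T})$ is a \emph{minimum} TED-set is Lemma \ref{Le:TIs1=1}, not the observation, which only gives that it is a TED-set). Whether, say, $e_4=v_4v_5$ carries label $L_1$ or $L_2$ in $\overline{T}$, and whether the precondition of $\mathcal{O}_3$ at a $C$-vertex holds, cannot be read off from the local structure at the attachment vertex: the paper must pass to the specific minimum TED-set $D(\overline{T})+e_1+e_2$, examine the tree two and three levels above the attachment point, and prove (Claim \ref{1=1e5L1v6L2}) that $e_5$ is an $L_1$-edge while every other edge at $v_6$ is an $L_2$-edge, again via exchange arguments; only then does the $\mathcal{O}_3$ case close. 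Without carrying out this structural analysis the inductive step is not established, so the proposal is a plausible plan mirroring the paper's route rather than a proof.
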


\begin{proof}
We proceed by induction on the edge size of a nontrivial tree $T $ satisfying $\gamma'_{t}(T)=\gamma'(T)$. For the initial step, by Corollary \ref{Co:diam4}, a tree $T$ with diameter 4
satisfies $\gamma'_{t}(T)=\gamma'(T)$ and is in $\mathcal{T}_t$. For the inductive hypothesis, we assume that every tree $\overline{T}$ with $\gamma'_{t}(\overline{T})=\gamma'(\overline{T})$ has edge size less than $m$ and $diam(\overline{T})\geqslant 5$, there exists an edge label such that $\overline{T}\in\mathcal {T}$.

If a support vertex $v$ of $T$ has at least two leaf neighbors, say $u$ and $w$ two of them, then $v$ is still a support vertex in $\overline{T}=T-w$. By Theorem \ref{Th:NoLeafEdge}, any minimum edge dominating set of $\overline{T}$ containing no leaf edges is still an edge dominating set of $T$. So $\gamma'(T)=\gamma'_{t}(T)=\gamma'(\overline{T})\leqslant \gamma'_{t}(\overline{T})\leqslant \gamma'_{t}(T) $ and $\gamma'_{t}(\overline{T})=\gamma'(\overline{T})$. Hence, by the inductive hypothesis, $\overline{T}\in\mathcal{T}_{t}$. By Observation \ref{ob2} (\ref{Ob:EndpOfL1}), (\ref{Ob:L2NbSedge}), (\ref{Ob:LEisL1orL2}), $uv$ is an $L_1$- or $L_2$-edge and $v\in A_2\cup A_1$  in $\overline{T}$. We can obtain the tree $T$ by applying Operation $\mathcal{O}_{1}$ from $\overline{T}$ and a new vertex $w$, so $T\in\mathcal{T}_{t}$. We may assume that each support vertex of $(\gamma'_t=\gamma')$-tree $T$ of edge size $m$ has exactly one leaf neighbor, denoted by Assumption 1.

If a support vertex $v$ of $T$, say $w$ is a leaf neighbor of $v$, has a support neighbor with degree 2, then let $\overline{T}=T -w$. Similar to the discuss as above, $\gamma'_{t}(\overline{T})=\gamma'(\overline{T})$ and by the inductive hypothesis, $\overline{T}\in \mathcal{T}_t$. By Observation \ref{ob2} (\ref{Ob:LEisL1orL2}), $uv$ is an $S$-edge in $\overline{T}$, combined with Observation \ref{ob2} (\ref{Ob:SedgeisTED}), $v\in A_2$.  We can obtain the tree $T$ by applying Operation $\mathcal{O}_{1}$ from $\overline{T}$ and a new vertex $w$, so $T\in\mathcal{T}_{t}$. We may assume that there is no support vertex which has a support neighbor of degree 2, denoted by Assumption 2.

If $v$ has at least three support neighbors of degree 2 in $T$, say $\{u_1, u_2, \ldots, u_l\}$ and $l\geqslant 3$, and set $\overline{T}$ as the tree from $T$ by deleting $\{u_3, u_4, \ldots, u_l\}$ and their respective children, then similar to the discuss as above, $\overline{T}\in \mathcal{T}_t$ and $T$ is obtained from $\overline{T}$ by applying a series of Operation $\mathcal{O}_2$, so $T\in\mathcal{T}_{t}$. Hence we may assume that every vertex has at most two support neighbors of degree 2, denoted by Assumption 3.

Let $F_{t}$ be a $\gamma'_{t}(T)$-set containing non-leaf edges,  $P=v_{0}v_{1}\ldots v_{t}$ the longest path of $T$, say the edge $e_{i}=v_{i}v_{i+1}$. Obviously, $v_{1}$ is a support vertex of degree 2 and each child of $v_{2}$ is a support vertex of degree 2. We root $T$ at the vertex $v_t$.

Since $e_0$ is a leaf edge, $F_t$ must contain $e_1$. Combined with Lemma \ref{Le:1=1NonTriSta} and the choice of $F_t$, it is impossible to contain both $e_2$ and $e_3$ in $F_t$, i.e., $e_2\in F_t$ and  $e_3\notin F_t$ or $e_2\notin F_t$ and $e_3\in F_t$ or $e_2\notin F_t$ and $e_3\notin F_t$.

Combined with Assumptions 2 and 3, $d(v_2)=2$ or $3$. Next, we divide two cases according to the degree of $v_2$.

\noindent\textbf{Case 1.} $d(v_2)=3$.

By Assumptions 1 and 2, $v_2$ has another support child $v'_1$ of degree 2, say $v'_0$ is the child of $v'_1$.

\noindent\textbf{Subcase 1.1.} $e_2\in F_t$.

In this subcase, we can let $\overline{T}=T-\{v_{0},v_{1}\}$. Combined with Lemma \ref{Le:1=1NonTriSta} and the choice of $F_t$, the restriction of $F_t$ on $\overline{T}$ is a TED-set of $\overline{T}$, further,
$\gamma'_{t}(\overline{T})\leqslant \gamma'_{t}(T)-1$.
Combined with an obvious
inequality: $\gamma'(T)\leqslant \gamma'(\overline{T})+1$, we have $\gamma'(\overline{T})+1 \leqslant \gamma'_{t}(\overline{T})+1 \leqslant \gamma'_{t}(T)=\gamma'(T)\leqslant \gamma'(\overline{T})+1$, and so $\gamma'_{t}(\overline{T})$ =$ \gamma'(\overline{T})$. By the inductive hypothesis, there is an edge label of $\overline{T}$ such that $\overline{T}\in \mathcal{T}_{t}$. By Observation \ref{ob2} (\ref{Ob:LEisL1orL2}), $v'_{1}v_2$ is an $S$-edge in $\overline{T}$, combined with Observation \ref{ob2} (\ref{Ob:SedgeisTED}), there are at least two $S$-edges incident with $v_2$ in $\overline{T}$ in either case,  $v_2\in A_2$. We can obtain the tree $T$ by applying Operation $\mathcal{O}_{2}$ from $\overline{T}$ and a new edge $v_{0}v_{1}$, so ${T}\in \mathcal{T}_{t}$.

\noindent\textbf{Subcase 1.2.} $e_2\notin F_t$.

Let $\overline{T}=T-\{v_{0},v_{1},v'_{1},v'_{0}, v_2\}$. Since edges $v'_{1}v'_{0}$ and $e_0$ are leaf edges, combined with Lemma \ref{Le:1=1NonTriSta} and the choice of $F_t$,
the restriction of $F_t$ on $\overline{T}$ is a TED-set of $\overline{T}$, further,
$\gamma'_{t}(\overline{T})\leqslant \gamma'_{t}(T)-2$.

Combined with an obvious inequality:  $\gamma'(T) \leqslant \gamma'(\overline{T})+2$, we have $\gamma'(\overline{T}) +2\leqslant \gamma'_{t}(\overline{T})+2\leqslant \gamma'_{t}(T)=\gamma'(T)\leqslant  \gamma'(\overline{T})+2$, and so $\gamma'_{t}(\overline{T})= \gamma'(\overline{T})$.
By the inductive hypothesis, there is an edge label of $\overline{T}$ such that $\overline{T}\in \mathcal{T}_{t}$. We can obtain the tree $T$ by applying Operation $\mathcal{O}_{5}$ from $\overline{T}$ and a path $v_0v_1v_2v'_1v'_{0}$, so ${T}\in \mathcal{T}_{t}$.

\noindent\textbf{Case 2.}  $d(v_{2})=2$.

Since $d(v_2)=2$, we have $\{e_1,e_2 \}\subseteq F_t$ by the choice of $F_t$. So $(E(v_3)- e_2)\cap F_t=\emptyset$ by Lemma \ref{Le:1=1NonTriSta}.

\begin{claim}\label{1=1v3child}
Let $v'_2$ be a child of $v_3$ other than $v_2$. Then $v'_2$ is a leaf vertex.
\end{claim}

By contradiction. $v'_2$ has at most one support child by symmetry and Assumption 3. Then $v'_2v_3$ belongs to $F_t$ by the choice of $F_t$, a contradiction with Lemma \ref{Le:1=1NonTriSta}.

\begin{claim}\label{Cl:1=1v'4child}
Let $v'_4$ be any non-leaf child of $v_5$. Then each subtree $T_{4'}$ of $T- v'_4$ not containing $v_5$ is isomorphic to one of the graphs in the following figure.
\end{claim}

\begin{figure}[H]
  \centering
  \includegraphics[scale=0.55]{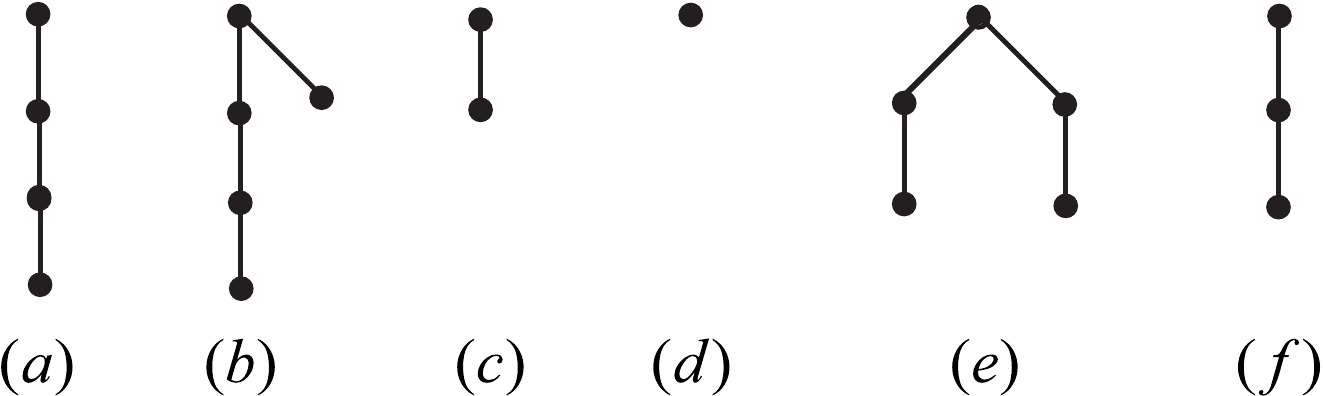}
    \vspace{0.3cm}
  \caption{The subgraphs following $v'_4$.}
  \label{fig:1=1v4Chi}
\end{figure}

 Let $v'_3$ be the child of $v'_4$ in $T_{4'}$. If the length of a longest path starting at $v'_3$ in $T_{4'}$ is 3, combined with symmetry, Claim \ref{1=1v3child} and Assumptions 1, 2, 3, then $T_{4'}$ is isomorphic to (a) or (b). If the length of a longest path starting at $v'_3$ in $T_{4'}$ is 2, combined with Assumptions 1, 2, 3, then $T_{4'}$ is isomorphic to (e) or (f). If the length of a longest path starting at $v'_3$ in $T_{4'}$ is 1, by Assumption 1, then $T_{4'}$ is isomorphic to (c). If the length of a longest path starting at $v'_3$ in $T_{4'}$ is 0, then $T_{4'}$ is (d). Therefore, $T_{4'}$ is isomorphic to one of the graphs in the Fig. \ref{fig:1=1v4Chi}.

 If $v_4$ has one non-leaf child, say $v''_3$, such that the subtree $T_4$ of $T-v_4$ containing $v''_3$ is isomorphic to (e) in Fig. \ref{fig:1=1v4Chi}, then $T_4$ is a $P_5$.
Let $\overline{T}$ be the subtree of $T-T_4$. If $v_4v''_3 \in F_t$, then $F_t-v_4v''_3+e_3-e_2$ is still an ED-set of $T$ of size $|F_t|-1$, a contradiction. Hence $v_4v''_3\notin F_t$ and the restriction of $F_t$ on $\overline{T}$ is a TED-set of $\overline{T}$, further $\gamma'_t(\overline{T})\leqslant \gamma'_t(T)-2$. Combined with an obvious inequality: $\gamma'(T)\leqslant \gamma'(\overline{T})+2$, we have $\gamma'(\overline{T})+2\leqslant \gamma'_t(\overline{T})+2 \leqslant \gamma'_t(T)=\gamma'(T)\leqslant \gamma'(\overline{T})+2$, so $\gamma'(\overline{T})=\gamma'_t(\overline{T})$. By the inductive hypothesis, $\overline{T} \in \mathcal{T}_t$ with an edge labelling. Thus $T$ is obtained from $\overline{T}$ by
applying Operation $\mathcal {O}_5$. So $T \in \mathcal{T}_t$.
In what follows assume that there is no subtree of $T-v_4$ not containing  $v_5$ isomorphic to (e) in Fig. \ref{fig:1=1v4Chi}, denoted by Assumption 4.

\begin{claim}\label{Cl:1=1v5}
If $|E(v_5)\cap F_t|\geqslant 1$ and there is a child, say $v''_4$, of $v_5$ such that there is a subtree $T_{4''}$ of $T-v''_4$ not containing $v_5$ isomorphic to (e) in Fig. \ref{fig:1=1v4Chi}, then $T$ is obtained from $\overline{T}=T- T_{4''}$ by applying Operation $\mathcal {O}_5$.
\end{claim}

Let $v'''_3$ be the child of $v''_4$ in $T_{4''}$. Obviously $T_{4''}$ is a $P_5$. In one case $|E(v_5)\cap F_t|\geqslant 2$, if $v''_4v'''_3\in F_t$, then $F'_t=F_t-v''_4v'''_3+v''_4v_5$ is still a TED-set of $T$. The restriction of $F'_t$ on $\overline{T}$ is a TED-set of $\overline{T}$, further $\gamma'_t(\overline{T})\leqslant \gamma'_t(T)-2$. Similar to the discuss as above, $\overline{T} \in \mathcal{T}_t$. Thus $T$ is obtained from $\overline{T}$ by applying Operation $\mathcal {O}_5$.  If $v''_4v'''_3\notin F_t$, then the restriction of $F_t$ on $\overline{T}$ is a TED-set of $\overline{T}$. Similar to the discuss as above, $T$ is obtained from $\overline{T}$ by applying Operation $\mathcal {O}_5$.

In the other case $|E(v_5)\cap F_t|=1$, say $e'_5=E(v_5)\cap F_t$. Let $x$ be any non-leaf neighbor of $v_5$ in $T$. We claim that $|E(x)\cap F_t|\neq 1$. Indeed, if this is not the case, say $e_x=E(x)\cap F_t$, then $F_t-e'_5-e_x+xv_5$ is an ED-set of $T$, a contradiction. So $v''_4v'''_3\notin F_t$. Combined Lemma \ref{Le:1=1NonTriSta}, similar to the discuss as above, $T$ is obtained from $\overline{T}$ by applying Operation $\mathcal {O}_5$.

In what follows assume that, if $|E(v_5)\cap F_t|\geqslant 1$ and let $v'_4$ be a child of $v_5$, there is no subtree of $T-v'_4$ not containing $v_5$ isomorphic to (e) in Fig. \ref{fig:1=1v4Chi}, denoted by Assumption 5.

By Claim \ref{Cl:1=1v'4child}, let $\{v_3^1, v_3^2, \ldots, v_3^w\}$ be the set of children of $v_4$ such that the subtree of $T-v_4$ containing $v_3^i$ is isomorphic to (a) for $0\leqslant i\leqslant w$,
$\{u_{3}^{1},u_{3}^{2},\ldots,u_{3}^{z}\}$ the set of children of $v_4$ such that the subtree of $T-v_4$ containing $u_{3}^{j}$ is isomorphic to (b) in Fig. \ref{fig:1=1v4Chi} for $0\leqslant j\leqslant z$. Combined with the structure of (a) and (b) and Lemma \ref{Le:1=1NonTriSta}, we have $|(E(v_3^i)-v_3^iv_4)\cap F_t|=1$ and $|(E(u_3^j)-u_3^jv_4)\cap F_t|=1$,
 say $e_v^i=(E(v_3^i)-v_3^iv_4)\cap F_t$ and $e_u^j=(E(u_3^j)-u_3^jv_4)\cap F_t$ for each $i$ and $j$. Then,
\begin{claim}\label{Cl:1=1d(v4)=2}
$w\leqslant 1$. Further, if $w= 1$, then $d(v_4)=2$.
\end{claim}

By contradiction. If $w\geqslant 2$, then $F_t-e_v^1+e_3-e_v^2$ is an ED-set of $T$ of size $|F_t|-1$, a contradiction. So $w\leqslant 1$.

Assume that $d(v_4)\geqslant 3$ when $w=1$. If $z\neq 0$, then $F_t-e_u^1+v_4u_3^1-e_v^1$ is an ED-set of $T$ of size $|F_t|-1$, a contradiction. If there is a subtree of $T-v_4$ not containing $v_5$ isomorphic to one of (c), (d) and (f) in Fig. \ref{fig:1=1v4Chi}, then $E(v_4)\cap F_t\neq \emptyset$ by the choice of $F_t$ and Lemma \ref{Le:1=1NonTriSta}, thus $F_t-e_v^1$ is an ED-set of $T$ of size $|F_t|-1$, a contradiction. Therefore, if $w=1$, then $d(v_4)=2$.

By Claim \ref{Cl:1=1d(v4)=2}, we have the following two claims.
\begin{claim}\label{1=1v4hasnoChid(g)}
There is no subtree of $T-v_4 $ not containing $v_5$ isomorphic to (f) in Fig. \ref{fig:1=1v4Chi}.
\end{claim}

By Claim \ref{Cl:1=1d(v4)=2}, we just need to consider the case $w=0$. By contradiction. If there is a subtree of $T-v_4$ containing one child, say $v'_3$, of $v_4$ isomorphic to (f), then $|E(v'_3)\cap F_t|\geqslant 2$ and  $v_4v'_3\in F_t$. Thus $F_t-v_4v'_3-e_2+e_3$ is an ED-set of $T$ of size $|F_t|-1$ by Lemma \ref{Le:1=1NonTriSta}, a contradiction.

Combined with Assumption 4 and Claim \ref{1=1v4hasnoChid(g)}, there is no  subtree of $T-v_4$ not containing $v_5$ isomorphic to (e) or (f).

\begin{claim}\label{Cl:1=1E(v4)0or2}
If $w=1$, then $E(v_4)\cap F_t=\emptyset$. Otherwise, $|E(v_4)\cap F_t|\neq 1$.
\end{claim}

By contradiction. Assume $E(v_4)\cap F_t\neq\emptyset$ when $w=1$, then $F_t-e_2$ is an ED-set of $T$, a contradiction.
 Assume $|E(v_4)\cap F_t|=1$ when $w=0$, then $e_4\in F_t$ by Claims \ref{Cl:1=1v'4child} and \ref{1=1v4hasnoChid(g)}. Thus $F_t -e_4+e_3-e_2$ is an ED-set of $T$, a contradiction.
Therefore, if $w=1$, then $E(v_4)\cap F_t=\emptyset$. Otherwise, $|E(v_4)\cap F_t|\neq 1$.

Combined with Claims \ref{Cl:1=1v'4child}, \ref{1=1v4hasnoChid(g)} and \ref{Cl:1=1E(v4)0or2}, if $|E(v_4)\cap F_t|\geqslant 2$, then there is a subtree of $T-v_4$ not containing $v_5$ isomorphic to graph (c) in Fig. \ref{fig:1=1v4Chi}, i.e., a $P_2=uv$, say $u$ is a child of $v_4$.
By Claim \ref{Cl:1=1d(v4)=2}, the subtree $T^a$ of $T-v_4$ containing $v_3$ is isomorphic to (b). Let $\overline{T}$ be the subtree of $T-T^a$. By $|E(v_4)\cap F_t|\geqslant 2$, the restriction of $F_t$ on $\overline{T}$ is a TED-set of $\overline{T}$, further $\gamma'_t(\overline{T})\leqslant \gamma'_t(T)-2$. Combined with an obvious inequality: $\gamma'(T)\leqslant \gamma'(\overline{T})+2$, we have $\gamma'(\overline{T})+2\leqslant \gamma'_t(\overline{T})+2 \leqslant \gamma'_t(T)=\gamma'(T)\leqslant \gamma'(\overline{T})+2$, and so $\gamma'(\overline{T})=\gamma'_t(\overline{T})$. By the inductive hypothesis, $\overline{T} \in \mathcal{T}_t$ with an edge labelling.
In $\overline{T}$, by Observation \ref{ob2} (\ref{Ob:LEisL1orL2}), the leaf edge $uv$ is an $L_1$-edge and $v_4u$ is an $S$-edge. Combined with Observation \ref{ob2}  (\ref{Ob:SedgeisTED}), $v_4\in A_2$.  Therefore $T$ is obtained from $\overline{T}$ by applying Operation $\mathcal{O}_3$. So $T \in \mathcal{T}_t$.

If $E(v_4)\cap F_t=\emptyset$, then there is no subtree of $T-v_4$ not containing $v_5$ isomorphic to (c) or (d). Combined with Claims \ref{Cl:1=1v'4child}, \ref{Cl:1=1d(v4)=2}, Assumption 4 and the above analysis, then we may assume that each subtree of $T-v_4$ not containing $v_5$ is isomorphic to (a) or (b), denoted by Assumption 6.

By Assumption 6, we can divide two subcases to discuss according to the subtree $T_4$ of $T-v_4$ containing $v_3$ is isomorphic to (a) or (b) as follow.

\noindent\textbf{Subcase 2.1.} $T_4$ is isomorphic to (a).

By Claims \ref{Cl:1=1d(v4)=2} and \ref{Cl:1=1E(v4)0or2}, $d(v_3)=d(v_4)=2$ and $E(v_4)\cap F_t=\emptyset$. Obviously, $e_4\notin F_t$ and $|E(v_5)\cap F_t|\geqslant 1$.

\begin{claim}\label{Cl:1=1E(v5)isBig2}
$|E(v_5)\cap F_t|\geqslant 2$.
\end{claim}

If $|(E(v_5)- e_4)\cap F_t|=1$, say $e'_5=E(v_5)\cap F_t$, then $F_t-e'_5 + e_4- e_2$ is an ED-set of $T$ of size $|F_t|-1$, a contradiction.

Combined with $|E(v_5)\cap F_t|\geqslant 2$ and $E(v_4)\cap F_t=\emptyset$, there is a child other than $v_4$, say $v'_4$, of $v_5$ such that $v_5v'_4\in F_t$. Obviously, $v'_4$ is not a leaf. Then

\begin{claim}\label{Cl:1=1v5hasSupChid}
$v'_4$ is a support vertex of degree 2.
\end{claim}

We first show that $v'_4$ is a support vertex. Assume to the contrary that $v'_4$ has no leaf children, combined with Claim \ref{Cl:1=1v'4child}, Assumption 5 and Lemma \ref{Le:1=1NonTriSta}, every subtree of $T-v'_4$ not containing $v_5$ is isomorphic to (a) or (b) in Fig. \ref{fig:1=1v4Chi}. Obviously  $F_t-v_5v'_4$ is still an ED-set of $T$ of size $|F_t|-1$, a contradiction. Therefore, $v'_4$ is a support vertex.

If $d(v'_4)\geqslant 3$, then we denote by $T'$ a subtree of $T-v'_4$ containing a non-leaf child of $v'_4$. Combined with Assumption 5 and  Claim \ref{Cl:1=1v'4child}, $T'$ is not isomorphic to (a) or (e) in Fig. \ref{fig:1=1v4Chi}. By Lemma \ref{Le:1=1NonTriSta}, $T'$ is not isomorphic to (c) or (f). Hence $T'$ is isomorphic to (b), say $v'_3$ the non-leaf child of $v'_4$, $v'_2$ the non-leaf child of $v'_3$. Obviously, $F_t-v_5v'_4-v'_2v'_3+v'_3v'_4$ is an ED-set of $T$ of size $|F_t|-1$, a contradiction. So $d(v'_4)=2$.

Since $d(v_2)=d(v_3)=d(v_4)=2$, the subgraph induced by $\{v_0, v_1, v_2, v_3 \}$ is $P_4$. Let $\overline{T}= T-\{v_0,v_1,v_2,v_3\}$. By $E(v_4)\cap F_t=\emptyset$, the restriction of $F_t$ on $\overline{T}$ is a TED-set of $\overline{T}$, further $\gamma'_t(\overline{T})\leqslant \gamma'_t(T)-2$. Combined with an obvious inequality: $\gamma'(T)\leqslant \gamma'(\overline{T})+2$, we have $\gamma'(\overline{T})+2\leqslant \gamma'_t(\overline{T})+2 \leqslant \gamma'_t(T)=\gamma'(T)\leqslant \gamma'(\overline{T})+2$, and so $\gamma'(\overline{T})=\gamma'_t(\overline{T})$. By the inductive hypothesis, $\overline{T} \in \mathcal{T}_t$ with an edge labelling. By Observation \ref{ob2} (\ref{Ob:LEisL1orL2}), $e_4$ is an $L_2$- or $L_1$-edge. Combined with Claim \ref{Cl:1=1v5hasSupChid} and Observation \ref{ob2} (\ref{Ob:EndpOfL1}), (\ref{Ob:SedgeisTED}), $e_4$ is an $L_2$-edge. Since $d_{\overline{T}}(v_4)=1$, $v_4\in B$.  Therefore $T$ is obtained from $\overline{T}$ by applying Operation $\mathcal{O}_4$, $T\in \mathcal{T}_t$.

\noindent\textbf{Subcase 2.2.} $T_4$ is isomorphic to (b).

In this subcase, let $v'_4$ be any non-leaf child of $v_5$, by symmetry and Assumption 6, there is no subtree of $T-v'_4$ not containing $v_5$ isomorphic to (a) in Fig. \ref{fig:1=1v4Chi}, and each subtree of $T-v_4$ not containing $v_5$ is isomorphic to (b), 
i.e., a $P_5$. Let $\overline{T}=T-T_4$. Since $e_3\notin F_t$ by Lemma \ref{Le:1=1NonTriSta}, the restriction of $F_t$ on $\overline{T}$ is a TED-set of $\overline{T}$, further $\gamma'_t(\overline{T})\leqslant \gamma'_t(T)-2$. Combined with an obvious inequality: $\gamma'(T)\leqslant \gamma'(\overline{T})+2$, we have $\gamma'(\overline{T})+2\leqslant \gamma'_t(\overline{T})+2 \leqslant \gamma'_t(T)=\gamma'(T)\leqslant \gamma'(\overline{T})+2$, further $\gamma'(\overline{T})=\gamma'_t(\overline{T})$.
By the inductive hypothesis, $\overline{T} \in \mathcal{T}_t$ with an edge labelling. Combined with the structure of (b) and Observation \ref{ob2} (\ref{Ob:L2NbSedge}), then in $\overline{T}$, all edges connecting $v_4$ and its children being $L_1$-edges, so $v_4\in C$ by Observation \ref{ob2} (\ref{Ob:EndpOfL1}). If in $\overline{T}$, $e_4$ is an $L_2$-edge or $e_4$ is an $L_1$-edge and adjacent to a leaf edge, then $T$ is obtained from $\overline{T}$ by applying Operation $\mathcal{O}_3$. In what follows we assume that in $\overline{T}$, $e_4$ is an $L_1$-edge and adjacent to non-leaf edges, denoted by Assumption 7.
Note that in $\overline{T}$, there is only one $S$-edge in $E_{\overline{T}}(v_5)$, say $e'_5$, and $v_5\in A_1$ in $\overline{T}$.

By Lemma \ref{Le:TIs1=1}, all $S$-edges in $\overline{T}$ construct a minimum total edge dominating set $D(\overline{T})$.
Then, $F'_t=D(\overline{T})+e_1+e_2$ is a minimum total edge dominating set of $T$ by $\gamma'_t(\overline{T})+2=\gamma'_t(T)$. Further, $|E(v_5)\cap F'_t|=1 $, Claim \ref{Cl:1=1v5} still holds. Then, we have the following claim:

\begin{claim}\label{1=1e5L1v6L2}
$e_5$ is an $L_1$-edge in $E_{\overline{T}}(v_5)$, and  all edges in $(E_{\overline{T}}(v_6)- e_5)$ are $L_2$-edges.
\end{claim}

Let $v'_4$ be any non-leaf child of $v_5$ other than $v_4$ and $v'_3$ any child of $v'_4$ in $\overline{T}$. There is no subtree of $\overline{T}- v'_4$ containing $v'_3$ isomorphic to (e) in $\overline{T}$  by Assumption 5 and Claim \ref{Cl:1=1v5}. We claim that the length of a longest path starting at $v'_3$ in the subtree $T'$ of $T-v'_4 $ containing $v'_3$ is 3 or 1. If the length of a longest path starting at $v'_3$ in $T'$ is 2 or 0, by Lemma \ref{Le:1=1NonTriSta} and Observation \ref{ob2} (\ref{Ob:LEisL1orL2}), (\ref{Ob:SedgeisTED}), then $v'_4\in A_1$ in $\overline{T}$, a contradiction by Observation \ref{ob2} (\ref{Ob:EndpOfL1}) and Lemma \ref{Le:1=1NonTriSta}. By symmetry, Claim \ref{Cl:1=1v'4child} and Assumptions 1, 2, 3, we know that $T'$ is isomorphic to (b) or (c).
Let $\{z^1_4,\ldots, z^h_4 \}$ be the set of children of $v_5$ such that there is a subtree $T^r_4$ of $T- z^r_4 $ not containing $v_5$ is isomorphic to (b) in $\overline{T}$ for $1\leqslant r\leqslant h$. Let $z^r_3$ be the child of $z^r_4$ in $T^r_4$. By the structure of (b), then $|E(z^r_3)\cap F'_t|=1$, say $e^r=(E(z^r_3)- z^r_3z^r_4)\cap F'_t$ for each $r$. If $E(v_6)$ has an $S$-edge other than $e_5$, then $F'_t-\{e^1,\ldots,e^h \} +\{z^1_3z^1_4,\ldots,z^h_3z^h_4 \}-e'_5$ is an ED-set of $T$ of size $|F'_t|-1$, a contradiction. So all edges in $(E(v_6)- e_5)$ are $L_1$- or $L_2$-edges. By Observation \ref{ob2} (\ref{Ob:L2NbSedge}), (\ref{Ob:SedgeisTED}), $e_5$ is an $L_1$-edge.

By contradiction. If there is one edge $e'_6=v_6v'_7$ in $(E_{\overline{T}}(v_6)- e_5)$ is an $L_1$-edge, then there is exactly one $S$-edge $e''_6$ incident with $v'_7$ by Observation \ref{ob2} (\ref{Ob:EndpOfL1}). Thus $F'_t-e''_6+e'_6-\{e^1,\ldots,e^h \} +\{z^1_3z^1_4,\ldots,z^h_3z^h_4 \}-e'_5$ is an ED-set of $T$ of size $|F'_t|-1$, a contradiction. Therefore, all edges in $(E_{\overline{T}}(v_6)- e_5)$ are $L_2$-edges.

Combined  with the structure of (b) and Claim \ref{1=1e5L1v6L2}, all $L_1$-edges in $(E_{\overline{T}}(v_4)-e_4)$ are adjacent to a leaf edge, and there exist a $P_4$ starting at $v_4$, whose edges are labelled as $L_1$, $L_1$, $L_2$ consecutively, and all edges in $(E_{\overline{T}}(v_6)-e_5)$ are $L_2$-edges. Hence we can obtain $T$ from $\overline{T}$ by applying Operation $\mathcal{O}_3$.
\end{proof}

As an immediate consequence of Lemmas \ref{Le:TIs1=1} and  \ref{Le:1=1IsT}, we have the following characterization of $(\gamma'_{t}=\gamma')$-trees.
 \begin{theorem}
 A tree is a $(\gamma'_{t}=\gamma')$-tree if and only if $T\in \mathcal{T}_{t}$.
 \end{theorem}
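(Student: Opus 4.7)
The plan is to derive the biconditional immediately by combining the two preceding lemmas. Lemma \ref{Le:TIs1=1} supplies the ``if'' direction and Lemma \ref{Le:1=1IsT} supplies the ``only if'' direction, so the proof of the theorem itself will consist of little more than two sentences citing them. No new inductive argument is required, because both lemmas already carry out a complete induction on the edge size of the tree.

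In more detail, Lemma \ref{Le:TIs1=1} shows that for every $T \in \mathcal{T}_t$ the set $D(T)$ of $S$-edges is a TED-set and, by exhibiting an auxiliary family $L$ of $L_1$-edges of size $|D(T)|$ each of which has a unique $S$-edge neighbour, forces $\gamma'(T) \geqslant |L| = |D(T)| \geqslant \gamma'_t(T) \geqslant \gamma'(T)$, hence $\gamma'_t(T) = \gamma'(T)$. Conversely, Lemma \ref{Le:1=1IsT} inductively dismantles any $(\gamma'_t = \gamma')$-tree $T$ along its longest path, producing a smaller $(\gamma'_t = \gamma')$-tree $\overline{T} \in \mathcal{T}_t$ from which $T$ is recovered by exactly one of the five constructor operations $\mathcal{O}_1, \ldots, \mathcal{O}_5$; the long case analysis (according to whether $d(v_2) = 2$ or $3$, the shape of the subtrees hanging off $v_4$, and which of the $e_i$ belong to a carefully chosen $\gamma'_t(T)$-set $F_t$) is the only genuinely technical ingredient, and it is already discharged there.

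The expected main obstacle therefore lies not in this final theorem but in the two lemmas it packages together, particularly in Lemma \ref{Le:1=1IsT}. For the theorem itself the only point worth verifying is that the base classes and excluded small cases line up across both directions: diameter-$4$ trees belong to $\mathcal{T}_t$ by construction and satisfy $\gamma'_t = \gamma'$ by Corollary \ref{Co:diam4}, while stars and double stars have $\gamma'_t = 2 > 1 = \gamma'$ by Example \ref{Ex:StaDou} and hence are correctly absent from both sides of the equivalence. With that sanity check in hand, the proof reduces to the one-line statement ``combine Lemmas \ref{Le:TIs1=1} and \ref{Le:1=1IsT}.''
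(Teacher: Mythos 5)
Your proposal matches the paper exactly: the paper states the theorem as an immediate consequence of Lemmas \ref{Le:TIs1=1} and \ref{Le:1=1IsT}, with all the inductive and case-analytic work already contained in those two lemmas. Your additional sanity check on the small-diameter base cases is consistent with the paper's setup and introduces nothing new or problematic.
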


\section{Acknowledgements}

This work was funded in part by National Natural Science Foundation
of China (Grants No. 11571155, 11201205).

\end{document}